\newtheorem{thm}{Theorem}[section]
\newtheorem{cor}[thm]{Corollary}
\newtheorem{lem}[thm]{Lemma}
\newtheorem{prop}[thm]{Proposition}
\theoremstyle{definition}
\newtheorem{defn}[thm]{Definition}
\newtheorem{rem}[thm]{Remark}
\newtheorem{example}[thm]{Example}
\numberwithin{equation}{section}
\newcommand{\abs}[1]{\lvert#1\rvert}
\newcommand{\norm}[1]{\lVert#1\rVert}
\newcommand{\bigabs}[1]{\bigl\lvert#1\bigr\rvert}
\newcommand{\bignorm}[1]{\bigl\lVert#1\bigr\rVert}
\newcommand{\Bigabs}[1]{\Bigl\lvert#1\Bigr\rvert}
\newcommand{\biggabs}[1]{\biggl\lvert#1\biggr\rvert}
\newcommand{\Bignorm}[1]{\Bigl\lVert#1\Bigr\rVert}
\newcommand{\biggnorm}[1]{\biggl\lVert#1\biggr\rVert}
\renewcommand{\mid}{\::\:}
\DeclareMathOperator{\fbl}{FBL}
\newcommand{\weaksumnorm}[2]{\mu_{#1}(#2)}
\newcommand{\term}[1]{{\textit{\textbf{#1}}}}   % To introduce a term
\renewcommand{\leq}{\ensuremath{\leqslant}}
\renewcommand{\le}{\ensuremath{\leqslant}}
\renewcommand{\ge}{\ensuremath{\geqslant}}
\title[Free Banach lattices and convexity]{Free Banach lattices under convexity conditions}
\author[Jard\'on-S\'anchez]{H\'ector Jard\'on-S\'anchez}
\address{Department of Mathematics and Statistics, Fylde College, Lancaster University, Lancaster,
LA1 4YF, United Kingdom}
\email{hectorjardon@gmail.com}
\author[Laustsen]{Niels Jakob Laustsen}
\address{Department of Mathematics and Statistics, Fylde College, Lancaster University, Lancaster,
LA1 4YF, United Kingdom}
\email{n.laustsen@lancaster.ac.uk}
\author[Taylor]{Mitchell A.~Taylor}
\address{Department of Mathematics, University of California, Berkeley, CA, 94720, United
States}
\email{mitchelltaylor@berkeley.edu}
\author[Tradacete]{Pedro Tradacete}
\address{Instituto de Ciencias Matem\'aticas (CSIC-UAM-UC3M-UCM)\\
Consejo Superior de Investigaciones Cient\'ificas\\
C/ Nicol\'as Cabrera, 13--15, Campus de Cantoblanco UAM\\
28049 Madrid, Spain.}
\email{pedro.tradacete@icmat.es}
\author[Troitsky]{Vladimir G.~Troitsky}
\address{Department of Mathematical and Statistical Sciences, University of Alberta, Edmonton, Alberta T6G 2G1, Canada}
\email{troitsky@ualberta.ca}
\date{\today}
\subjclass[2010]{46B42 (primary); 46A40, 06B25, 47B60 (secondary)} %Banach lattices; vector lattices; free lattices; linear operators on ordered spaces
\keywords{Free Banach lattice; $p$-convex Banach lattice; AM-space; $p$-summing map}
\begin{document}
\begin{abstract}
  We prove the existence of free objects in certain subcategories of
  Banach lattices, including $p$-convex Banach lattices, Banach
  lattices with upper $p$-estimates, and AM-spaces. From this we
  immediately deduce that projectively universal objects exist in each
  of these subcategories, extending results of Leung, Li, Oikhberg and
  Tursi (\emph{Israel J.\ Math.}~2019). In the $p$-con\-vex and
  AM-space cases, we are able to explicitly identify the norms of the
  free Banach lattices, and we conclude by investigating the structure
  of these norms in connection with nonlinear $p$-summing maps.
\end{abstract}

\maketitle

\section{Introduction}

Our objective is to construct free Banach lattices having certain
additional desirable properties, so let us begin by recalling the
fundamental definition: The \term{free Banach lattice over a Banach
  space~$E$} is a Banach lattice $\fbl[E]$ together with a linear
isometry $\phi_E\colon E\to \fbl[E]$ such that, for every Banach
lattice $X$ and every bounded linear operator $T\colon E\to X$, there
is a unique linear lattice homomorphism
$\widehat{T}\colon\fbl[E]\to X$ making the following diagram commute:

\begin{center}
 \tikzset{node distance=2cm, auto}
 \begin{tikzpicture}
  \node (C) {$\fbl[E]$};
  \node (P) [below of=C] {$E$};
  \node (Ai) [right of=P] {$X.$};
  \draw[->, dashed] (C) to node {$\exists ! \ \widehat{T}$} (Ai);
  \draw[<-] (C) to node [swap] {$\phi_E$} (P);
  \draw[->] (P) to node [swap] {$T$} (Ai);
 \end{tikzpicture}
\end{center}

De~Pag\-ter and Wick\-stead~\cite{dePW} initiated the focused study of
free Banach lattices by introducing the free Banach lattice generated
by a non\-empty set~$A$; in the above language, it corresponds to
$\fbl\bigl[\ell_1(A)\bigr]$. The construction of $\fbl[E]$ for an
arbitrary Banach space $E$ was carried out in~\cite{ART}, with further
research conducted in~\cite{APA}, \cite{ATV}, and~\cite{T19}. By now
it is firmly established that free Banach lattices provide a
fundamental tool for understanding the interplay between Banach-space
and Banach-lattice properties. In particular, spaces of the
form~$\fbl[\ell_2(A)]$ for an uncountable set~$A$ are used in
\cite[Section~5]{ART} to resolve an open problem of Die\-stel.

However, being universal, free Banach lattices usually lack classical
properties such as reflexivity and $p$-convexity. To counteract this,
we will restrict the target spaces in the above diagram to only those
Banach lattices~$X$ which satisfy some fixed property~$P$, and then
look to replace $\fbl[E]$ with a Banach lattice satisfying~$P$. More
specifically, we shall prove the following result.

\begin{thm}\label{p-convex exists}
  Let $E$ be a Banach space and $1\le p\le\infty$. There exists a pair
  $\bigl(\fbl^{(p)}[E], \phi_E\bigr),$ where $\fbl^{(p)}[E]$ is a
  $p$-convex Banach lattice with $p$-convexity constant~$1$ and
  $\phi_E \colon E\to\fbl^{(p)}[E]$ is a linear isometry, with the
  following universal property: For every $p$-convex Banach lattice
  $X$ and every bounded linear operator $T\colon E\to X,$ there exists
  a unique linear lattice homomorphism
  $\widehat{T}\colon\fbl^{(p)}[E]\to X$ such that
  $\widehat{T}\circ\phi_E=T$.  Moreover,
  $\bignorm{\widehat{T}}\le M^{(p)}(X)\,\norm{T},$ where $M^{(p)}(X)$
  denotes the $p$-con\-vexity constant of $X,$ and the pair
  $\bigl(\fbl^{(p)}[E],\phi_E\bigr)$ is essentially unique.
\end{thm}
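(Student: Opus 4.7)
The plan is to adapt the function-space construction of $\fbl[E]$ from~\cite{ART} by systematically replacing the $\ell_1$-type expressions appearing in its defining norm with $\ell_p$-type ones. Concretely, for a positively homogeneous function $f\colon E^*\to\mathbb{R}$, I would set
\begin{equation*}
\norm{f}^{(p)} := \sup\biggl\{\Bigl(\sum_{i=1}^n \abs{f(x_i^*)}^p\Bigr)^{1/p} : n\in\mathbb{N},\; x_1^*,\ldots,x_n^*\in E^*,\; \sup_{x\in B_E}\Bigl(\sum_{i=1}^n \abs{x_i^*(x)}^p\Bigr)^{1/p}\le 1\biggr\},
\end{equation*}
with the natural $\ell_\infty$-modification when $p=\infty$ (so the admissible families reduce to finite subsets of $B_{E^*}$ and $\norm{\cdot}^{(p)}$ becomes the supremum norm on $B_{E^*}$, consistent with the AM-space case). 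Then define $\fbl^{(p)}[E]$ to be the $\norm{\cdot}^{(p)}$-completion of the vector sublattice generated by the evaluation functionals $\delta_x\colon x^*\mapsto x^*(x)$, for $x\in E$, and set $\phi_E(x):=\delta_x$. The case $n=1$ of the defining supremum reduces to the dual-norm condition $\norm{x^*}\le 1$, giving $\norm{\phi_E(x)}^{(p)}=\norm{x}$, so $\phi_E$ is a linear isometry.

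Next I would verify that $\norm{\cdot}^{(p)}$ is a lattice norm and that $\fbl^{(p)}[E]$ is $p$-convex with constant~$1$. The lattice property follows immediately from the modulus inside the $\ell_p$-sum, while $p$-convexity with constant~$1$ reduces, after evaluating at a fixed admissible family $(x_j^*)$, to a Fubini-type interchange of finite sums combined with Minkowski's inequality in $\ell_p$, and so is built into the definition.

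For the universal property, fix a $p$-convex Banach lattice $X$ and a bounded operator $T\colon E\to X$. Since the vector sublattice generated by $\{\delta_x : x\in E\}$ is naturally isomorphic to the free vector lattice over $E$, one is forced to define $\widehat{T}$ on this sublattice by
$P(\delta_{x_1},\ldots,\delta_{x_m})\mapsto P(Tx_1,\ldots,Tx_m)$
for every positively homogeneous lattice polynomial~$P$. The crucial estimate is then
\begin{equation*}
\bignorm{\widehat{T}(u)}_X\le M^{(p)}(X)\norm{T}\cdot\norm{u}^{(p)}
\end{equation*}
for $u$ in this sublattice, after which density furnishes the extension to all of $\fbl^{(p)}[E]$. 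I would establish this by a Krivine-type functional-calculus argument: write $u=P(\delta_{x_1},\ldots,\delta_{x_m})$, expand into $\ell_p$-combinations of evaluations at admissible families $(x_j^*)$, and then invoke the defining inequality for $p$-convexity of $X$ to convert the resulting scalar $\ell_p$-sum into a norm estimate in $X$, paying precisely the constant $M^{(p)}(X)\norm{T}$.

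The main obstacle will be this final transfer step, which requires $\norm{\cdot}^{(p)}$ to genuinely majorize (up to $M^{(p)}(X)$) every norm arising from a $p$-convex target, and is the place where the $p$-convexity inequality in $X$ must be coupled with the $\ell_p$-structure of the defining supremum. Essential uniqueness of $\bigl(\fbl^{(p)}[E],\phi_E\bigr)$ is then a standard categorical consequence: any two pairs with the same universal property are linked by mutually inverse lattice isometries obtained by applying the universal property twice.
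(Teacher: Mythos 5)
Your route is the explicit function-space construction, whereas the paper proves this theorem abstractly (Section~3): it takes the sublattice $Y_0$ of $\mathbb R^{B_{E^*}}$ generated by the evaluations $\delta_x$ and closed under positively homogeneous function calculus, and defines the norm as the supremum of \emph{all} $p$-convex lattice seminorms $\nu$ on $Y_0$ with $\nu(\delta_x)\le\norm{x}$. With that definition the universal property is nearly automatic, because $f\mapsto\norm{S_0f}_X$ is itself such a seminorm. The explicit norm you write down is indeed proved (in Section~6, and only for $1<p<\infty$) to coincide with the free $p$-convex norm, but the paper stresses that this identification ``requires quite different techniques'' from the $p=1$ case of \cite{ART}.

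The genuine gap sits exactly where you flag ``the main obstacle'': the estimate $\bignorm{\widehat T u}_X\le M^{(p)}(X)\,\norm{T}\,\norm{u}^{(p)}$. Your sketch --- expand $u=P(\delta_{x_1},\dots,\delta_{x_m})$ into ``$\ell_p$-combinations of evaluations at admissible families'' and invoke the $p$-convexity inequality of $X$ --- does not go through for a general $p$-convex $X$: nothing in the definition of $\norm{\cdot}^{(p)}$ produces, from the lattice expression $P(Tx_1,\dots,Tx_m)\in X$, a weakly $p$-summable family in $E^*$ that norms it, and the $p$-convexity inequality of $X$ points in the wrong direction for this purpose. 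The paper's argument first treats $X=L_p(\mu)$, where one decomposes $\Omega$ into sets $A_{ij}^{\pm}$ on which $\widehat Tf$ equals $\pm T(x_i-y_j)$, chooses norming functions $g_{ij},h_{ij}\in L_{p^*}(\mu)$ supported there, and pulls them back by $T^*$ to obtain an admissible family with $\weaksumnorm{p}{x_{11}^*,\dots,y_{nn}^*}\le\norm{T}$ and $\abs{f(x_{ij}^*)}=\bignorm{(\widehat Tf)\chi_{A_{ij}^+}}_{L_p}$; the general case is then reduced to this one by fixing a norming functional $x^*\in X^*_+$, passing to the induced AL-space, and applying the Maurey--Nikishin factorization theorem to factor through $L_p(h\,d\mu)$ at the cost of the constant $M^{(p)}(X)$. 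Without this factorization (or an equivalent domination argument actually carried out), your transfer step is unproved; the isometry of $\phi_E$, the $p$-convexity of the completion, the uniqueness of $\widehat T$, and the essential uniqueness of the pair are all fine as you state them.
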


In later sections we will give an explicit description of the Banach
lattices whose existence is asserted in \Cref{p-convex exists} and
show analogous results when $p$-convexity is replaced with upper
$p$-estimates or being a (unital) AM-space. Of course, one cannot
expect a version of \Cref{p-convex exists} for reflexivity or
$p$-concavity without any restrictions on~$E$, as not all Banach
spaces embed into such Banach lattices.

An outline of the paper is as follows: \Cref{section:Prelim} contains
some preliminary material, primarily concerning function calculus,
that we require in \Cref{section:largestconvexnorm} when showing that
$\fbl^{(p)}[E]$ exists.  In fact, the main result of
\Cref{section:largestconvexnorm} is somewhat more general than
\Cref{p-convex exists}, as it is stated in terms of a new notion which
we call ``$\mathcal D$-convexity'' and which encompasses both
$p$-convexity and upper $p$-estimates.  The approach taken in
\Cref{section:largestconvexnorm} is similar to that of the recent
paper~\cite{T19}, in which~$\fbl[E]$ is constructed as the completion
of the free vector lattice $\operatorname{FVL}[E]$
(see~\cite{Baker,Bleier}) under a certain ``maximal'' lattice
norm. However, some additional work is needed to make sense of
function calculus.

An advantage of this abstract approach is that it allows us to
construct free objects in various other subcategories of Banach
lattices, while a significant drawback is that it does not provide any
concrete description of these spaces, notably leaving it open whether
$\fbl^{(p)}[E]$ can be realized as a vector lattice of
functions. Despite this, the universal properties of these spaces are
powerful enough to establish several results, which we do in
\Cref{Basic}. \Cref{AMstuff} is devoted to free AM-spaces and free
$C(K)$-spaces.

Then, in Section \ref{section:description} we return to the beginnings
by giving an alternative, explicit description of $\fbl^{(p)}[E]$ as a
sublattice of the vector lattice of real-valued functions defined on
the dual Banach space~$E^*$ of~$E$, thus in particular resolving the
above problem. This section can be read independently of the previous
ones.

In order to motivate the candidate norm, let us recall the
construction of the space $\fbl[E]$ and its norm
from~\cite[Section~2]{ART}: For any function
$f\colon E^*\to\mathbb R$, define
\begin{displaymath}
  \norm{f}_{\fbl[E]}=
  \sup\Bigl\{\sum_{k=1}^n \bigabs{f(x_k^*)}\mid n\in\mathbb N, \,
   x_1^*,\dots,x_n^*\in E^*, \, \sup_{x\in B_E}\sum_{k=1}^n
  \bigabs{x_k^\ast(x)} \le 1\Bigr\}.
\end{displaymath}
The set $H_1[E]$ of positively homogeneous functions
$f\colon E^*\to\mathbb R$ with $\norm{f}_{\fbl[E]}<\infty$ turns out
to be a Banach lattice with respect to the pointwise operations and
this norm, and~$\fbl[E]$ is defined as the closure in~$H_1[E]$ of the
sublattice generated by the set $\{\delta_x \mid x\in E\}$, where
$\delta_x\colon E^*\to\mathbb R$ is the evaluation map given by
$\delta_x(x^*)=x^*(x)$, together with the linear isometry
$\phi_E\colon E\to\fbl[E]$ defined by $\phi_E(x)=\delta_x$.

We shall show that analogously, for $1<p<\infty$, $\fbl^{(p)}[E]$ can
equivalently be defined as the closure of the sublattice generated by
$\{\delta_x \mid x\in E\}$ in the Banach lattice of positively
homogeneous functions $f\colon E^*\to\mathbb R$ for which the quantity
\begin{equation}\label{Eq:FBLpNorm}
	\norm{f}_p =
	\sup\biggl\{\Bigl(\sum_{k=1}^n\bigabs{f(x_k^*)}^p\Bigr)^{\frac{1}{p}} \mid
        n\in\mathbb N,\, x_1^*,\dots,x_n^*\in E^*,\,
        \sup_{x\in B_E}\sum_{k=1}^n\bigabs{x_k^*(x)}^p\le 1\biggr\}
\end{equation}
is finite.  This implies in particular that there is a continuous
linear injection of~$\fbl^{(p)}[E]$ into the space~$C(B_{E^*})$ of
continuous functions on the closed unit ball~$B_{E^*}$ of~$E^*$,
equipped with the relative weak$^*$ topology. Although the
form~\eqref{Eq:FBLpNorm} of $\norm{\cdot}_p$ is clearly motivated
by~\cite{ART}, the proof that it is indeed the free $p$-convex norm
requires quite different techniques. Having this explicit expression
will be particularly useful in certain computations.

For the reader who is familiar with the theory of $p$-summing operators, the above expression for the free $p$-convex norm has another interpretation: A function $f\colon E^*\to \mathbb R$ for which $\norm{f}_p<\infty$ maps weakly $p$-summable sequences in $E^*$ to (strongly) $p$-summable sequences in~$\mathbb R$. In Section \ref{section:pqsumming} we devote our attention to the spaces of positively homogeneous functions from $E^*$ to $\mathbb R$ with finite $(p,q)$-summing norm, and explore classical arguments such as the Dvorezky--Rogers Theorem and Pietsch's Domination Theorem in this nonlinear setting.

\section{Preliminaries}\label{section:Prelim}
\noindent
Our notation and terminology are mostly standard and will be
introduced as and when needed. A few general conventions are as
follows. All vector spaces, including vector lattices, Banach spaces
and Banach lattices, are real.  The terms ``operator'' and ``lattice
homomorphism'' will be synonymous with ``bounded linear operator'' and
``linear lattice homomorphism'', respectively.  A ``sublattice'' of a
vector lattice will mean a linear subspace which is closed under
finite suprema and infima. We shall repeatedly use the elementary fact
that the sublattice generated by a subset~$W$ of a vector lattice is
given by
\begin{equation}\label{eq:ABsublattice}
  \biggl\{ \bigvee_{j=1}^n x_j - \bigvee_{j=1}^n y_j \mid  n\in\mathbb{N},\,
  x_1,\dots,x_n,y_1,\dots,y_n\in \operatorname{span}W\biggr\};
\end{equation}
see, e.g.,  \cite[p.~204, Exer\-cise~8(b)]{AB}.

For a positive element~$e$ of a vector lattice~$X$, $I_e$ denotes the
order ideal of~$X$ generated by~$e$, that is,
\begin{equation*}
  I_e = \bigl\{ x\in X \mid  \abs{x}\le\lambda
  e\ \text{for some}\ \lambda\in[0,\infty)\bigr\}.
\end{equation*}
We can endow~$I_e$ with the lattice seminorm defined by
\begin{equation}\label{eq:IeNorm}
  \norm{x}_e = \inf\bigl\{
  \lambda\in[0,\infty) \mid  \abs{x}\le\lambda e\bigr\}
\end{equation} 
for every $x\in I_e$. This seminorm  is a norm if~$X$ is Ar\-chi\-me\-dean.
\subsection*{Function calculus}
The definitions of $p$-convexity and upper $p$-estimates can be stated
by means of function calculus, which is a standard tool in Banach
lattices (see, e.g., \cite[1.d]{LT2}). However, since our
constructions will require us to work in more general vector lattices,
we bring some basic facts to the reader's attention. We essentially
need only what is contained in~\cite{LT}.

For $m\in\mathbb N$, $\mathcal{H}_m$ denotes the vector lattice of
continuous, positively homogeneous, real-valued functions
on~$\mathbb{R}^m$. Clearly it contains the $k^{\text{th}}$ coordinate
projection $\pi_k\colon (t_1,\dots,t_m)\mapsto t_k$ for each
$k\in\{1,\dots,m\}$. We say that a vector lattice~$X$ \term{admits a
  positively ho\-mo\-geneous continuous function calculus} if, for
every $m\in \mathbb{N}$ and every $m$-tuple
$\boldsymbol{x}=(x_1,\dots,x_m)\in X^m$, there is a lattice
homomorphism $\Phi_{\boldsymbol{x}}\colon\mathcal{H}_m\to X$ such that
\begin{equation}\label{defnphcfcEq}
  \Phi_{\boldsymbol{x}}(\pi_k) = x_k
\end{equation}
for each $k\in\{1,\ldots,m\}$. In this case, we refer to the map
$\boldsymbol{x}\mapsto\Phi_{\boldsymbol{x}}$ (or
simply~$\Phi_{\boldsymbol{x}}$) as a \term{positively homogeneous
  continuous function calculus} for~$X$. In line with common practice,
for $h\in\mathcal{H}_m$, we usually use the shorter and more
suggestive notation $h(x_1,\dots,x_m)$ instead
of~$\Phi_{\boldsymbol{x}}(h)$.
   
It is well known that every uniformly complete vector lattice (in
particular, every Banach lattice) admits a positively homogeneous
continuous function calculus.  The following more precise
characterization was given in \cite[Theorem~1.3]{LT}: An
Ar\-chi\-medean vector lattice $X$ admits a positively homogeneous
continuous function cal\-cu\-lus if and only if it is \term{finitely
  uniformly complete} in the sense that, for every $m\in\mathbb N$ and
$x_1,\ldots,x_m\in X$, there is a positive element $e\in X$ such that
$e\ge\bigvee_{j=1}^m\abs{x_j}$ and the norm $\norm{\,\cdot\,}_e$ given
by~\eqref{eq:IeNorm} is complete on the closed sub\-lattice of
$\bigl(I_e,\norm{\,\cdot\,}_e\bigr)$ generated by $x_1,\ldots,x_m$. We shall
freely use this result in the following with\-out any further
reference.

We can define a norm $\norm{\,\cdot\,}_{\mathcal{H}_m}$
on~$\mathcal{H}_m$ by identifying it with $C(S_{\ell_\infty^m})$ via
the restriction map $h\mapsto h|_{S_{\ell_\infty^m}}$,
where~$S_{\ell_\infty^m}$ denotes the unit sphere
of~$\ell_\infty^m$. The sublattice of~$\mathcal{H}_m$ generated by
$\{\pi_k \mid 1\le k\le m\}$ is dense with respect to this norm.

We shall now establish some basic facts that we require in later
sections. They may be known, but as we have been unable to find any
precise references in the literature, we include their proofs.  We
begin with a result which will imply that when an Archimedean vector
lattice admits a positively homogeneous continuous function calculus,
it is unique in a very strong sense.

\begin{lem}\label{NJLlemma2Aug}
  Let $T\colon X\to Y$ be a lattice homomorphism between two
  Archi\-medean vector lattices~$X$ and~$Y,$ let $x_1,\ldots,x_m\in X$
  for some $m\in\mathbb N,$ and define $e = \bigvee_{j=1}^m\abs{x_j}$
  in $X_+$. Then:
\begin{enumerate}
\item\label{NJLlemma2AugI} $T$ maps the ideal~$I_e$ of~$X$ into the
  ideal~$I_{Te}$ of~$Y,$ and $\norm{Tx}_{Te}\le\norm{x}_e$ for every
  $x\in I_e$.
\item\label{NJLlemma2AugII} $\norm{Tx}_{Te} = \norm{x}_e$ for each
  $x\in I_e$ if and only if the restriction of~$T$ to~$I_e$ is
  injective.
\end{enumerate}
Suppose that $\Phi_{\boldsymbol{x}}\colon \mathcal{H}_m\to X$ is a
lattice homomorphism which satisfies~\eqref{defnphcfcEq}. Then:
\begin{enumerate}
\setcounter{enumi}{2}
\item\label{NJLlemma2AugIII} The image of~$\Phi_{\boldsymbol{x}}$ is
  contained in~$I_e$, and $\Phi_{\boldsymbol{x}}$ is bounded with norm
  at most~$1$ when considered an operator
  into~$\bigl(I_e,\norm{\,\cdot\,}_e\bigr)$.
\item\label{NJLlemma2AugIIII} The composite map
  $T\circ\Phi_{\boldsymbol{x}}$ is the unique lattice homomorphism
  from~$\mathcal{H}_m$ into~$Y$ which maps $\pi_k$ to $Tx_k$ for each
  $k=1,\ldots,m$.
  \end{enumerate}
\end{lem}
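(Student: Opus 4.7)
The plan is to handle the four parts in order, since each builds on tools established in the previous ones, and throughout I would exploit two basic facts: that a lattice homomorphism $T$ satisfies $T|z|=|Tz|$ (and preserves finite suprema and infima), and that on an Archimedean vector lattice the seminorm $\|\cdot\|_e$ defined by \eqref{eq:IeNorm} is a genuine norm.

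For part \eqref{NJLlemma2AugI}, I would take $x\in I_e$ with $|x|\le\lambda e$ for some $\lambda\ge 0$ and apply $T$: since $|Tx|=T|x|\le\lambda Te$, we obtain $Tx\in I_{Te}$ and $\|Tx\|_{Te}\le\lambda$; taking the infimum over admissible $\lambda$ yields $\|Tx\|_{Te}\le\|x\|_e$. For part \eqref{NJLlemma2AugII}, one direction is immediate: if equality holds then $Tx=0$ forces $\|x\|_e=0$, hence $x=0$ by Archimedeanness. For the converse, suppose $T|_{I_e}$ is injective and pick any $\lambda>\|Tx\|_{Te}$; then $|Tx|\le\lambda Te$, so applying the lattice homomorphism identity $(T|x|-\lambda Te)^+=T\bigl((|x|-\lambda e)^+\bigr)$ gives $T\bigl((|x|-\lambda e)^+\bigr)=0$. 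Since $(|x|-\lambda e)^+\in I_e$, injectivity forces $(|x|-\lambda e)^+=0$, i.e.\ $|x|\le\lambda e$, so $\|x\|_e\le\lambda$, and letting $\lambda\downarrow\|Tx\|_{Te}$ concludes.

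For part \eqref{NJLlemma2AugIII}, the main observation is a pointwise inequality in $\mathcal{H}_m$: by positive homogeneity and the definition of $\|\,\cdot\,\|_{\mathcal{H}_m}$ via restriction to $S_{\ell_\infty^m}$, we have
\begin{equation*}
  |h|\le\norm{h}_{\mathcal{H}_m}\bigl(|\pi_1|\vee\cdots\vee|\pi_m|\bigr)
\end{equation*}
in $\mathcal{H}_m$. Applying the lattice homomorphism $\Phi_{\boldsymbol x}$ and using $\Phi_{\boldsymbol x}(|\pi_k|)=|x_k|$ together with preservation of finite suprema, this yields $|\Phi_{\boldsymbol x}(h)|\le\norm{h}_{\mathcal{H}_m}\,e$, which gives both containment in $I_e$ and the norm bound $\|\Phi_{\boldsymbol x}(h)\|_e\le\|h\|_{\mathcal{H}_m}$.

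For part \eqref{NJLlemma2AugIIII}, existence is clear: the composition $T\circ\Phi_{\boldsymbol x}$ is a lattice homomorphism sending $\pi_k$ to $Tx_k$. Uniqueness will follow from a density argument. Let $\Psi\colon\mathcal{H}_m\to Y$ be any lattice homomorphism with $\Psi(\pi_k)=Tx_k$, and let $\tilde e=\bigvee_{k=1}^m|Tx_k|=Te$. By part \eqref{NJLlemma2AugIII} applied separately to $\Psi$ (with the tuple $(Tx_1,\dots,Tx_m)$) and by combining parts \eqref{NJLlemma2AugI} and \eqref{NJLlemma2AugIII} for $T\circ\Phi_{\boldsymbol x}$, both maps take values in $I_{Te}$ and are bounded as operators into $(I_{Te},\|\,\cdot\,\|_{Te})$. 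They agree by construction on $\{\pi_1,\dots,\pi_m\}$, hence, being lattice homomorphisms, on the entire sublattice generated by $\{\pi_k\}$; that sublattice is dense in $\mathcal{H}_m$ (as recalled in the excerpt), and since $\|\,\cdot\,\|_{Te}$ is a norm on $I_{Te}$ by Archimedeanness of $Y$, two continuous maps agreeing on a dense subset agree everywhere. The main (mild) subtlety is just the bookkeeping in part \eqref{NJLlemma2AugII}, where one must carefully pass from the operator inequality $|Tx|\le\lambda Te$ to a statement about $|x|$ using injectivity on the ideal; all other parts are essentially direct consequences of the lattice homomorphism identities and the pointwise inequality underpinning \eqref{NJLlemma2AugIII}.
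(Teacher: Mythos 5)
Your proposal is correct and follows essentially the same route as the paper: the same infimum argument for (i), the same injectivity argument for (ii) (your $(|x|-\lambda e)^+$ is just the negative of the paper's $0\wedge(\lambda e-|x|)$), the same strong-unit estimate for (iii) (the paper phrases it as an application of (i) to $\Phi_{\boldsymbol{x}}$ with $\bigvee_k|\pi_k|=\mathbb{1}$, which unwinds to exactly your pointwise inequality), and the same density-plus-boundedness argument in $(I_{Te},\norm{\,\cdot\,}_{Te})$ for (iv). No gaps.
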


\begin{proof}
  \eqref{NJLlemma2AugI}. For every $x\in I_e$, we can find
  $\lambda\in[0,\infty)$ such that $\abs{x}\le\lambda e$.  Since~$T$
  is a lattice homomorphism, we have $\abs{Tx}\le\lambda Te$, so
  $Tx\in I_{Te}$ with $\norm{Tx}_{Te}\le\lambda$. Now the conclusion
  follows by taking the infimum over all~$\lambda$ with
  $\abs{x}\le\lambda e$.

  \eqref{NJLlemma2AugII}. The forward implication is clear
  because~$\norm{\,\cdot\,}_e$ is a norm on~$I_e$.

  Conversely, suppose that the restriction~$T|_{I_e}$ is injective, and
  consider $x\in I_e$ and $\lambda\in[0,\infty)$ with 
  $\abs{Tx}\le\lambda Te$. Then $T\bigl(\lambda e - \abs{x}\bigr) =
  \lambda Te -\abs{Tx}\ge 0$, so that
  \begin{displaymath}
    0 = 0\wedge T\bigl(\lambda e - \abs{x}\bigr)
    = T\Bigl(0\wedge\bigl(\lambda e - \abs{x}\bigr)\Bigr).
  \end{displaymath}
  This implies that $0\wedge\bigl(\lambda e-\abs{x}\bigr) =0$ by the
  injectivity of~$T|_{I_e}$, that is, $\lambda e\ge\abs{x}$. Hence
  $\lambda\ge\norm{x}_e$, and taking the infimum over all~$\lambda$
  with $\abs{Tx}\le \lambda Te$, we conclude that
  $\norm{Tx}_{Te}\ge\norm{x}_e$. The opposite inequality was shown
  in~\eqref{NJLlemma2AugI}.

  \eqref{NJLlemma2AugIII}.  This is a special case
  of~\eqref{NJLlemma2AugI}, applied with $T=\Phi_{\boldsymbol{x}}$ and
  $x_k =\pi_k|_{S_{\ell_\infty^m}}$ for each $k=1,\ldots,m$. To
  see this, recall our identification of~$\mathcal{H}_m$ with
  $C(S_{\ell_\infty^m})$, and observe that
  $\bigvee_{k=1}^m\bigl\lvert\pi_k|_{S_{\ell_\infty^m}}\bigr\rvert =
  \mathbb{1}$ is a strong unit in~$C(S_{\ell_\infty^m})$, with the
  corresponding lattice norm~\eqref{eq:IeNorm} being equal to the
  uniform norm~$\norm{\,\cdot\,}_\infty$.

  \eqref{NJLlemma2AugIIII}. Only the uniqueness statement is not
  clear. To prove it, let $S\colon\mathcal{H}_m\to Y$ be any
  lattice homo\-mor\-phism with $S(\pi_k)=Tx_k$ for each
  $k=1,\ldots,m$.  By~\eqref{NJLlemma2AugI}
  and~\eqref{NJLlemma2AugIII}, we may regard $T\colon I_e\to I_{Te}$,
  $\Phi_{\boldsymbol{x}}\colon\mathcal{H}_m\to I_e$, and
  $S\colon\mathcal{H}_m\to I_{Te}$ as bounded lattice homomorphisms
  with respect to the specified domains and codomains, where $I_e$
  and~$I_{Te}$ are given the norms~$\norm{\,\cdot\,}_e$
  and~$\norm{\,\cdot\,}_{Te}$, respectively. Then
  $T\circ\Phi_{\boldsymbol{x}}\colon \mathcal{H}_m\to I_{Te}$ is also
  bounded, and therefore it is equal to~$S$ because
  $(T\circ\Phi_{\boldsymbol{x}})(\pi_k) = S(\pi_k)$ for each
  $k=1,\ldots,m$ and $\{\pi_k \mid 1\le k\le m\}$
  generates a dense sublattice of~$\mathcal{H}_m$.
\end{proof}  

\begin{cor}\label{CorSublatticephcfc} 
  Let~$X$ be a finitely uniformly complete Archimedean vector lattice,
  and let~$Y$ be a sublattice of~$X$.  Then~$Y$ is finitely uniformly
  complete if and only if
  $\Phi_{\boldsymbol{y}}(\mathcal{H}_m)\subseteq Y$ for every
  $m\in\mathbb N$ and $\boldsymbol{y}\in Y^m$,
  where~$\Phi_{\boldsymbol{y}}$ denotes the positively homogeneous
  continuous function calculus for~$X$.
\end{cor}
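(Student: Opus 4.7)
\bigskip

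\noindent\textbf{Proof plan.} The approach is to leverage the characterization theorem from \cite{LT} (quoted in the excerpt) that identifies ``finitely uniformly complete'' with ``admitting a positively homogeneous continuous function calculus''. This reduces both directions of the corollary to statements about the existence and compatibility of function calculi on $X$ and $Y$, after noting that $Y$, as a sublattice of the Archimedean vector lattice $X$, is itself Archimedean.

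For the backward implication, suppose $\Phi_{\boldsymbol{y}}(\mathcal{H}_m)\subseteq Y$ for every $m$ and $\boldsymbol{y}\in Y^m$. Since $Y$ is a sublattice of $X$, the lattice operations in $Y$ are the restrictions of those in $X$, so the co-restriction $\Phi_{\boldsymbol{y}}\colon \mathcal{H}_m\to Y$ is still a lattice homomorphism and it sends $\pi_k$ to $y_k$. This provides a positively homogeneous continuous function calculus for $Y$, so $Y$ is finitely uniformly complete by \cite[Theorem~1.3]{LT}.

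For the forward implication, assume $Y$ is finitely uniformly complete, and let $\Psi_{\boldsymbol{y}}\colon\mathcal{H}_m\to Y$ denote its positively homogeneous continuous function calculus. Fix $\boldsymbol{y}=(y_1,\dots,y_m)\in Y^m$ and let $\iota\colon Y\hookrightarrow X$ be the inclusion, which is an (injective) lattice homomorphism. Apply \Cref{NJLlemma2Aug}\,\eqref{NJLlemma2AugIIII} with the roles of $X$ and $Y$ taken by $Y$ and $X$, respectively, and with $T=\iota$ and the calculus $\Psi_{\boldsymbol{y}}$: this tells us that $\iota\circ\Psi_{\boldsymbol{y}}$ is the unique lattice homomorphism from $\mathcal{H}_m$ into $X$ sending $\pi_k$ to $\iota(y_k)=y_k$. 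Since the calculus $\Phi_{\boldsymbol{y}}$ on $X$ is another such lattice homomorphism, the uniqueness forces $\Phi_{\boldsymbol{y}}=\iota\circ\Psi_{\boldsymbol{y}}$, whose image lies in $Y$.

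The only subtlety that needs a sentence of care is the application of the uniqueness clause in \Cref{NJLlemma2Aug}\,\eqref{NJLlemma2AugIIII}: that clause is stated for lattice homomorphisms landing in the codomain of $T$, so one must be explicit that both $\iota\circ\Psi_{\boldsymbol{y}}$ and $\Phi_{\boldsymbol{y}}$ are being regarded as maps into $X$ (rather than, say, only into $Y$). Once this is set up, the rest of the argument is immediate. No other step presents a genuine obstacle.
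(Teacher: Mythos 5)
Your proposal is correct and follows exactly the paper's proof: the backward implication by co-restricting $\Phi_{\boldsymbol{y}}$ to obtain a function calculus for~$Y$ (which the paper dismisses as ``clear''), and the forward implication by applying the uniqueness clause of \Cref{NJLlemma2Aug}\,\eqref{NJLlemma2AugIIII} to the inclusion map $Y\hookrightarrow X$. You have merely spelled out the details that the paper leaves implicit.
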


\begin{proof}
  The implication $\Leftarrow$ is clear, while the converse follows
  from the uniqueness statement in
  \Cref{NJLlemma2Aug}\eqref{NJLlemma2AugIIII}, applied in the case
  where $T\colon Y\to X$ is the inclusion map.
\end{proof}  

\begin{cor}\label{CorFunctionCalcCommutesWithLatHom}
  Let $T\colon X\to Y$ be a lattice homomorphism between two finitely
  uniformly complete Archi\-medean vector lattices~$X$ and~$Y$.  Then
  $T\bigl(h(x_1,\dots,x_m)\bigr)=h(Tx_1,\dots,Tx_m)$ for every $m\in\mathbb N,$
  $h\in\mathcal{H}_m,$ and $x_1,\ldots,x_m\in X$.
\end{cor}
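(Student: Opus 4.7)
The plan is to apply \Cref{NJLlemma2Aug}\eqref{NJLlemma2AugIIII} directly. Fix $m\in\mathbb{N}$ and $x_1,\dots,x_m\in X$, and set $\boldsymbol{x}=(x_1,\dots,x_m)$ and $T\boldsymbol{x}=(Tx_1,\dots,Tx_m)$. Since $X$ is finitely uniformly complete, it admits a positively homogeneous continuous function calculus $\Phi_{\boldsymbol{x}}\colon\mathcal{H}_m\to X$; by definition, $h(x_1,\dots,x_m)=\Phi_{\boldsymbol{x}}(h)$. Likewise, since $Y$ is finitely uniformly complete, it admits a positively homogeneous continuous function calculus $\Phi_{T\boldsymbol{x}}\colon\mathcal{H}_m\to Y$ with $\Phi_{T\boldsymbol{x}}(\pi_k)=Tx_k$ for each $k$, and $h(Tx_1,\dots,Tx_m)=\Phi_{T\boldsymbol{x}}(h)$.

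The strategy is to show that the two lattice homomorphisms $T\circ\Phi_{\boldsymbol{x}}$ and $\Phi_{T\boldsymbol{x}}$ from $\mathcal{H}_m$ to $Y$ coincide. Both are lattice homomorphisms sending $\pi_k$ to $Tx_k$ for each $k\in\{1,\dots,m\}$: for $T\circ\Phi_{\boldsymbol{x}}$ this follows from \eqref{defnphcfcEq} applied in $X$, and for $\Phi_{T\boldsymbol{x}}$ it is its defining property. By the uniqueness assertion in \Cref{NJLlemma2Aug}\eqref{NJLlemma2AugIIII}, we conclude $T\circ\Phi_{\boldsymbol{x}}=\Phi_{T\boldsymbol{x}}$, which evaluated at $h$ gives exactly $T\bigl(h(x_1,\dots,x_m)\bigr)=h(Tx_1,\dots,Tx_m)$.

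There is essentially no obstacle, since all the substantive work has already been done in \Cref{NJLlemma2Aug}: parts \eqref{NJLlemma2AugI} and \eqref{NJLlemma2AugIII} supply the boundedness needed to invoke the uniqueness in part \eqref{NJLlemma2AugIIII}, and that uniqueness is precisely what forces the two candidate lattice homomorphisms to agree. Thus the corollary is a one-line consequence of the preceding lemma.
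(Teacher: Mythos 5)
Your proof is correct and is exactly the argument the paper intends: the corollary is deduced by applying the uniqueness assertion of \Cref{NJLlemma2Aug}\eqref{NJLlemma2AugIIII} to the two lattice homomorphisms $T\circ\Phi_{\boldsymbol{x}}$ and $\Phi_{T\boldsymbol{x}}$, both of which send $\pi_k$ to $Tx_k$. The paper simply states that this is immediate from the lemma; you have spelled out the same one-line argument in full.
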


\begin{proof}
  This is immediate from  \Cref{NJLlemma2Aug}\eqref{NJLlemma2AugIIII}.
\end{proof}  

Our next result involves the following standard notion: A
sequence~$(x_k)$ in an Ar\-chi\-medean vector lattice $X$
\term{converges uniformly} to $x\in X$ if $X$ contains a positive
element~$e$ such that, for every $\varepsilon\in(0,\infty)$, there is
$k_0\in\mathbb N$ with $\abs{x_k-x}\le\varepsilon e$ whenever
$k\ge k_0$. If explicit reference to the element~$e$ is required, we
call it a \term{regulator} and say that it \term{witnesses} the
convergence.

\begin{lem}\label{lemma:unifconv}
  Let $X$ be a finitely uniformly complete Archimedean vector lattice,
  let $m\in\mathbb N,$ and suppose that $(x_k^1),\ldots, (x_k^m)$ are
  sequences in~$X$ which converge uni\-form\-ly to $x^1,\ldots,x^m,$
  respectively.  Then the sequence $\bigl(h(x^1_k,\ldots,x^m_k)\bigr)$ converges
  uni\-form\-ly to~$h(x^1,\ldots,x^m)$ for every $h\in \mathcal{H}_m$.
\end{lem}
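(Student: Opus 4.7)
The plan is to reduce to the case where $h$ lies in the sublattice of $\mathcal{H}_m$ generated by the coordinate projections $\pi_1,\ldots,\pi_m$, and then extend to general $h\in\mathcal{H}_m$ by approximation, using the density of this sublattice (noted above) together with the quantitative norm bound from \Cref{NJLlemma2Aug}\eqref{NJLlemma2AugIII}. First I unify regulators: if $e_j$ witnesses $(x_k^j)\to x^j$, set $e=\bigvee_{j=1}^m e_j$, so that for every $\varepsilon>0$ there is $k_0\in\mathbb N$ with $\lvert x_k^j-x^j\rvert\leq\varepsilon e$ for all $j$ and all $k\geq k_0$. Let $E=e+\bigvee_{j=1}^m\lvert x^j\rvert$; by enlarging $k_0$ if necessary, $\bigvee_{j=1}^m\lvert x_k^j\rvert\leq E$ whenever $k\geq k_0$. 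I will show that $E$ is a regulator witnessing the conclusion.

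For the polynomial step, if $h$ belongs to the sublattice generated by $\pi_1,\ldots,\pi_m$, then by~\eqref{eq:ABsublattice} it has the form $\bigvee_i L_i-\bigvee_i M_i$ for linear combinations $L_i,M_i$ of $\pi_1,\ldots,\pi_m$. Combining the elementary inequality $\bigl\lvert\bigvee_i a_i-\bigvee_i b_i\bigr\rvert\leq\bigvee_i\lvert a_i-b_i\rvert$ with linearity shows directly that $h(x_k^1,\ldots,x_k^m)\to h(x^1,\ldots,x^m)$ uniformly, with regulator a constant multiple of $e$ depending on the coefficients of~$h$. For general $h\in\mathcal{H}_m$, I would choose lattice polynomials $p_n$ with $\lVert p_n-h\rVert_{\mathcal{H}_m}\to 0$ and apply \Cref{NJLlemma2Aug}\eqref{NJLlemma2AugIII} to $\Phi_{\boldsymbol{y}}$ for $\boldsymbol{y}=(y_1,\ldots,y_m)\in X^m$:
\[ \bigl\lvert h(\boldsymbol{y})-p_n(\boldsymbol{y})\bigr\rvert=\bigl\lvert\Phi_{\boldsymbol{y}}(h-p_n)\bigr\rvert\leq\lVert h-p_n\rVert_{\mathcal{H}_m}\bigvee_{j=1}^m\lvert y_j\rvert. \]
Taking $\boldsymbol{y}=(x^j)_j$ and $\boldsymbol{y}=(x_k^j)_j$ for $k\geq k_0$ bounds both quantities by $\lVert h-p_n\rVert_{\mathcal{H}_m}\cdot E$. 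Given $\varepsilon>0$, I would first choose $n$ with $\lVert h-p_n\rVert_{\mathcal{H}_m}<\varepsilon/4$, then apply the polynomial step to find $k_1\geq k_0$ with $\lvert p_n(x_k^1,\ldots,x_k^m)-p_n(x^1,\ldots,x^m)\rvert\leq(\varepsilon/2)E$ for $k\geq k_1$; the triangle inequality then yields $\lvert h(x_k^1,\ldots,x_k^m)-h(x^1,\ldots,x^m)\rvert\leq\varepsilon E$.

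The main subtlety I anticipate is that the regulator from the polynomial step depends on the coefficients of $p_n$, hence on $n$. This is resolved by the three-term argument above: once $n$ is fixed based on $\varepsilon$, the $n$-dependent constant is absorbed into the threshold $k_1$ rather than into the regulator itself, so the single $n$-independent regulator $E$ suffices throughout. The crucial structural input that makes this work is \Cref{NJLlemma2Aug}\eqref{NJLlemma2AugIII}, which provides a bound on $\Phi_{\boldsymbol{y}}$ that is uniform in $h\in\mathcal{H}_m$ and controlled by $\bigvee_{j=1}^m\lvert y_j\rvert$.
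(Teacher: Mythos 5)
Your argument is correct and follows essentially the same route as the paper's proof: approximate $h$ in $\norm{\,\cdot\,}_{\mathcal H_m}$ by a lattice polynomial in $\pi_1,\dots,\pi_m$, handle the polynomial case via the elementary supremum inequality, control the two approximation errors by $\norm{h-p_n}_{\mathcal H_m}\bigvee_j\abs{y_j}$, and absorb the polynomial-dependent constant into the convergence threshold rather than the regulator. The only difference is cosmetic (your regulator $E$ versus the paper's $f$ in~\eqref{lemma:unifconvEq0}).
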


\begin{proof}
  For each $j=1,\ldots,m$, let $e^j\in X_+$ be a regulator which
  witnesses that $(x^j_k)$ converges uniformly to~$x^j$.  We shall
  show that
  \begin{equation}\label{lemma:unifconvEq0}
    f = \bigvee_{j=1}^m\abs{x^j}+ 
    \sum_{j=1}^me^j+ \bigvee_{j=1}^m(\abs{x^j} + e^j)\in X_+
  \end{equation}
  is a regulator witnessing that $\bigl(h(x^1_k,\ldots,x^m_k)\bigr)$ converges
  uni\-form\-ly to~$h(x^1,\ldots,x^m)$ for any $h\in \mathcal{H}_m$.
  To this end, let $\varepsilon>0$.  Since the sublattice generated by
  $\{\pi_1,\ldots, \pi_m\}$ is dense in~$\mathcal{H}_m$, it contains a
  function~$\ell$ such that
  $\norm{h-\ell}_{\mathcal{H}_m}\le\varepsilon$, that is,
  \begin{equation}\label{lemma:unifconvEq1}
    \Bigabs{h(t_1,\ldots,t_m)-\ell(t_1,\ldots,t_m)}\le
    \varepsilon\bigvee_{j=1}^m\abs{t_j}
  \end{equation}
  for every $t_1,\ldots,t_m\in\mathbb R$. By~\eqref{eq:ABsublattice},
  we can express~$\ell$ as
  \begin{displaymath}
    \ell=\bigvee_{i=1}^n\Bigl(\sum_{j=1}^m\alpha_{ij}\pi_j\Bigr)
    -\bigvee_{i=1}^n\Bigl(\sum_{j=1}^m\beta_{ij}\pi_j\Bigr)
  \end{displaymath}
  for some $n\in \mathbb{N}$ and coefficients
  $\alpha_{ij},\beta_{ij}\in\mathbb R$. It follows that for every
  $k\in\mathbb N$,
 \begin{align}\label{lemma:unifconvEq2}
    \Bigabs{\ell&(x^1,\dots,x^m)-\ell(x_k^1,\dots,x_k^m)}\\
    &\le
    \biggabs{\bigvee_{i=1}^n\Bigl(\sum_{j=1}^m\alpha_{ij}x^j\Bigr) -
    \bigvee_{i=1}^n\Bigl(\sum_{j=1}^m\alpha_{ij}x_k^j\Bigr)} +
    \biggabs{\bigvee_{i=1}^n\Bigl(\sum_{j=1}^m\beta_{ij}x^j\Bigr) -
    \bigvee_{i=1}^n\Bigl(\sum_{j=1}^m\beta_{ij}x_k^j\Bigr)}\notag\\
    &\le\bigvee_{i=1}^n\sum_{j=1}^m\abs{\alpha_{ij}}\ \abs{x^j - x_k^j}
      +\bigvee_{i=1}^n\sum_{j=1}^m\abs{\beta_{ij}}\ \abs{x^j-x_k^j}
      \le C\sum_{j=1}^m\abs{x^j - x_k^j}\notag,
  \end{align}
  where
  $C=\bigvee_{i,j}\abs{\alpha_{ij}}+\bigvee_{i,j}\abs{\beta_{ij}}$. Choose
  $k_0\in\mathbb N$ such that
  $\abs{x^j - x_k^j}\le\frac{\varepsilon}{\varepsilon\,\vee\,C}e^j$ for every
  $k\ge k_0$ and $j=1,\ldots,m$. Then, combining the
  estimate
\begin{multline*}
   \Bigabs{h(x^1,\ldots,x^m)-h(x_k^1,\ldots,x_k^m)}
   \le
   \Bigabs{h(x^1,\ldots,x^m)-\ell(x^1,\ldots,x^m)}\\ +
   \Bigabs{\ell(x^1,\ldots,x^m)-\ell(x_k^1,\ldots,x_k^m)}
   + \Bigabs{\ell(x_k^1,\ldots,x_k^m)- h(x_k^1,\ldots,x_k^m)}
\end{multline*}
with~\eqref{lemma:unifconvEq1}--\eqref{lemma:unifconvEq2}, we obtain
\begin{displaymath}
  \Bigabs{h(x^1,\ldots,x^m)-h(x_k^1,\ldots,x_k^m)}
  \le\varepsilon\bigvee_{j=1}^m\abs{x^j}+ 
  \varepsilon\sum_{j=1}^me^j+ \varepsilon\bigvee_{j=1}^m\abs{x^j_k}
  \le \varepsilon f
 \end{displaymath}
for every  $k\geqslant k_0$, where~$f$ is defined by~\eqref{lemma:unifconvEq0}. This completes the proof.
\end{proof}

\begin{cor}\label{Continuitylemma}
  Let~$X$ be a Banach lattice, and let $h\in\mathcal{H}_m$ for
  some $m\in\mathbb N$. Then the map from $X^m$ to $X$ defined via
  $(x_1,\ldots,x_m)\mapsto h(x_1,\ldots,x_m)$ is
  continuous with respect to the norm
  $\bignorm{(x_1,\ldots,x_m)}=\bigvee_{j=1}^m\norm{x_j}$ on~$X^m$.
\end{cor}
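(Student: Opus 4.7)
The plan is to adapt the proof of \Cref{lemma:unifconv} to the Banach lattice setting, replacing uniform convergence by norm convergence throughout. The key facts we exploit are that a Banach lattice is finitely uniformly complete (so function calculus is available), the lattice norm satisfies $\norm{\abs{y}}=\norm{y}$, and consequently $\bignorm{\bigvee_{j=1}^m\abs{y_j}}\leq \sum_{j=1}^m\norm{y_j}$.

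Fix $(x^1,\dots,x^m)\in X^m$, let $\varepsilon>0$, and consider any sequence $(x_k^1,\dots,x_k^m)$ converging to $(x^1,\dots,x^m)$ in the prescribed norm; so $\norm{x_k^j-x^j}\to 0$ for each $j$, and in particular the quantities $\norm{x_k^j}$ are bounded, say by some constant $M>0$ which also majorizes $\norm{x^j}$. Using density of the sublattice generated by $\{\pi_1,\dots,\pi_m\}$ in $\mathcal{H}_m$, choose $\ell=\bigvee_{i=1}^n\bigl(\sum_{j=1}^m\alpha_{ij}\pi_j\bigr)-\bigvee_{i=1}^n\bigl(\sum_{j=1}^m\beta_{ij}\pi_j\bigr)$ with $\norm{h-\ell}_{\mathcal{H}_m}\leq\varepsilon$, and set $C=\bigvee_{i,j}\abs{\alpha_{ij}}+\bigvee_{i,j}\abs{\beta_{ij}}$.

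The next step is to transfer the pointwise inequality $\abs{h(t)-\ell(t)}\leq\varepsilon\bigvee_j\abs{t_j}$ into the lattice inequality
\begin{equation*}
  \bigabs{h(y^1,\dots,y^m)-\ell(y^1,\dots,y^m)}\leq \varepsilon\bigvee_{j=1}^m\abs{y^j}\qquad(y^1,\dots,y^m\in X).
\end{equation*}
This follows from \Cref{CorFunctionCalcCommutesWithLatHom}: the two inequalities $h-\ell\mp\varepsilon\bigvee_j\abs{\pi_j}\lessgtr 0$ hold in $\mathcal{H}_m$, and the function calculus $\Phi_{(y^1,\dots,y^m)}$ is a lattice homomorphism, so preserves them. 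Analogously, the computation of \eqref{lemma:unifconvEq2} in the proof of \Cref{lemma:unifconv} yields
\begin{equation*}
  \bigabs{\ell(x^1,\dots,x^m)-\ell(x_k^1,\dots,x_k^m)}\leq C\sum_{j=1}^m\bigabs{x^j-x_k^j}.
\end{equation*}

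Finally, apply the triangle inequality of the norm to the obvious three-term decomposition of $h(x^1,\dots,x^m)-h(x_k^1,\dots,x_k^m)$, take norms of the three lattice estimates, and use $\bignorm{\bigvee_j\abs{y^j}}\leq\sum_j\norm{y^j}$. The first and third terms are bounded by $\varepsilon mM$, while the middle term is bounded by $C\sum_j\norm{x^j-x_k^j}\to 0$. Hence $\limsup_k\norm{h(x^1,\dots,x^m)-h(x_k^1,\dots,x_k^m)}\leq 2\varepsilon mM$, and since $\varepsilon$ is arbitrary, continuity follows. There is no serious obstacle here since the structure of the argument is dictated by \Cref{lemma:unifconv}; the only point requiring care is the lattice inequality for $\abs{h-\ell}$, which is handled by the function-calculus naturality of \Cref{CorFunctionCalcCommutesWithLatHom}.
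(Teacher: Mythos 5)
Your proof is correct, but it takes a genuinely different route from the paper's. The paper argues by contradiction: it assumes the map fails to be continuous at some point, extracts a subsequence along which the images stay $\varepsilon$-far from $h(x^1,\ldots,x^m)$, then uses the standard fact that every norm-convergent sequence in a Banach lattice has a uniformly convergent subsequence (with respect to some regulator) and invokes \Cref{lemma:unifconv} to derive a contradiction. You instead give a direct quantitative estimate, essentially re-running the computation inside the proof of \Cref{lemma:unifconv} with the lattice norm in place of a regulator: approximate $h$ by a lattice polynomial $\ell$, transfer the pointwise bound $\abs{h-\ell}\le\varepsilon\bigvee_j\abs{\pi_j}$ into $X$ via the fact that $\Phi_{\boldsymbol{y}}$ is a lattice homomorphism (this is really the defining property of the function calculus rather than \Cref{CorFunctionCalcCommutesWithLatHom}, but the justification is sound), use the Lipschitz-type estimate \eqref{lemma:unifconvEq2} for $\ell$, and then pass to norms using $\bignorm{\bigvee_j\abs{y_j}}\le\sum_j\norm{y_j}$. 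Your approach is more self-contained --- it avoids both the contradiction argument and the subsequence fact about uniform convergence --- at the cost of duplicating the approximation argument that the paper has already packaged into \Cref{lemma:unifconv}; the paper's proof is shorter precisely because it reuses that lemma as a black box.
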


\begin{proof}
  Assume the contrary. Then there are sequences
  $(x_k^1),\ldots, (x_k^m)$ in~$X$ which norm converge to
  $x^1,\ldots,x^m,$ respectively, but the sequence
  $\bigl(h(x^1_k,\ldots,x^m_k)\bigr)$ is not norm convergent
  to~$h(x^1,\ldots,x^m)$. Passing to a subsequence, we may suppose
  that there is $\varepsilon>0$ such that
  \begin{equation}\label{ContinuitylemmaEq1}
  \Bignorm{h(x^1_k,\ldots,x^m_k) - h(x^1,\ldots,x^m)}\ge\varepsilon
  \end{equation}
  for every $k\in\mathbb N$.  Since every norm convergent sequence has
  a uniformly convergent subsequence, we may further suppose that
  $(x_k^1),\ldots, (x_k^m)$ converge uniformly to $x^1,\ldots,x^m,$
  respectively. Then \Cref{lemma:unifconv} implies that
  $\bigl(h(x^1_k,\ldots,x^m_k)\bigr)$ converges uniformly
  to~$h(x^1,\ldots,x^m)$, which
  contradicts~\eqref{ContinuitylemmaEq1}.
  \end{proof}

\section{Construction of free $\mathcal{D}$-convex Banach lattices}\label{section:largestconvexnorm}

A Banach lattice $X$ is \term{$p$-convex} for some $p\in[1,\infty]$ if there exists a constant $M\ge 1$ such that for every $m\in\mathbb N$ and $x_1,\dots,x_m\in X$, we have 
\begin{equation}\label{Defn of convex}
  \biggnorm{\Bigl(\sum_{k=1}^m\abs{x_k}^p\Bigr)^{\frac{1}{p}}}\le
  M\Bigl(\sum_{k=1}^m\norm{x_k}^p\Bigr)^{\frac{1}{p}}.
\end{equation}
Clearly it suffices to verify this inequality in the case where
$x_1,\dots,x_m$ are positive. The least constant~$M$ for
which~\eqref{Defn of convex} is valid is denoted $M^{(p)}(X)$ and is
called the \term{$p$\nobreakdash-con\-vex\-ity constant} of
$X$. If~\eqref{Defn of convex} is assumed to hold only for pairwise
disjoint elements $x_1,\dots,x_m\in X$, then $X$ is said to satisfy an
\term{upper $p$\nobreakdash-es\-ti\-mate}. The reader is referred to
\cite{LT2} for further information on such Banach lattices.

In this section we present a general construction which yields the
existence of both free $p$-convex Banach lattices and free Banach
lattices with upper $p$-estimates. For this, we need a definition
encompassing both concepts.

Let $m\in\mathbb N$, and recall that~$\mathcal{H}_m$ denotes the set
of all continuous, positively homogeneous functions from
$\mathbb{R}^m$ to~$\mathbb{R}$. We say that a function
$h\in \mathcal{H}_m$ is \term{monotone on $\mathbb{R}^m_+$} if
$h(t_1,\dots,t_m)\le h(s_1,\dots,s_m)$ whenever
$t_1,\dots,t_m,s_1,\dots,s_m\in \mathbb{R}$ with $0\le t_k\le s_k$
for each $k=1,\ldots,m$. This, of course, implies that
$h(t_1,\dots,t_m)\ge 0$ whenever $t_1,\dots,t_m\ge 0$. We denote by
$\mathcal{H}_m^{>0}$ the set of all continuous, positively homogeneous
functions which are monotone on~$\mathbb{R}^m_+$.

By a \term{convexity condition}, we understand a triple
$\mathcal{D}=(\mathcal{G}, M,\vartheta)$, where $\mathcal{G}$ is a
non\-empty subset of~$\bigcup_{m=1}^\infty \mathcal{H}_m^{>0}$ and
$M\colon\mathcal{G}\to [1,\infty)$ and
$\vartheta\colon\mathcal{G}\to \{0,1\}$ are any functions. Given a
convexity condition $\mathcal{D}=(\mathcal{G}, M,\vartheta)$, we set
$\mathcal{G}_m = \mathcal{G}\cap\mathcal{H}_m^{>0}$ for
$m\in\mathbb N$ and say that a lattice seminorm $\nu$ on a finitely
uniformly complete Ar\-chi\-me\-dean vector lattice~$X$ is
\term{$\mathcal{D}$-convex} if, for every $m\in\mathbb N$ and
$g\in \mathcal{G}_m$, the in\-equality
\begin{equation}\label{Defn:Dconvex}
   \nu\bigl(g(x_1,\dots,x_m)\bigr)\le M(g)\cdot g\bigl(\nu(x_1),\dots, \nu(x_m)\bigr)
\end{equation}
holds for all pairwise disjoint elements
$x_1,\dots, x_m\in X_+$ if $\vartheta(g)=0$, and for all
elements $x_1,\dots, x_m\in X_+$ if $\vartheta(g)=1$.  A Banach
lattice is \term{$\mathcal{D}$-convex} if its norm is
$\mathcal{D}$-convex.

Note that every closed sublattice of a $\mathcal{D}$-convex Banach
lattice is $\mathcal{D}$-convex by uniqueness of the function
calculus.

\begin{rem}\label{RemarkMainCasesofDconvex}
  It should be clear that $p$-convexity and upper $p$-estimates can be
  re\-covered easily from $\mathcal{D}$-convexity. Indeed, for
  $p\in[1,\infty]$, let
  $\mathcal{G}^p=\bigl\{\norm{\,\cdot\,}_{\ell_p^m} \mid m\in
  \mathbb{N}\bigr\}$ be the collection of all
  $\ell_p^m$\nobreakdash-norms, and
  let~$M\colon\mathcal{G}^p\to[1,\infty)$ be a constant function.
  Then, choosing $\vartheta$ to be the con\-stant function~$0$, we
  obtain an upper $p$-estimate, while choosing $\vartheta$ to be the
  con\-stant function~$1$ gives $p$\nobreakdash-con\-vexi\-ty (with
  constant $M^{(p)}\le M$).
\end{rem}

\begin{lem}\label{lemma:DconvexCompletion}
  Let~$\mathcal{D}$ be a convexity condition.  The completion of a
  finitely uniformly complete Archimedean vector lattice with respect
  to a $\mathcal{D}$-convex lattice norm is a
  $\mathcal{D}$\nobreakdash-con\-vex Banach lattice.
\end{lem}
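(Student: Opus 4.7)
The plan is to take $\widehat{X}$ to be the standard norm completion of $X$, observing first that the lattice operations of $X$ are uniformly continuous with respect to any lattice norm, so they extend by continuity to $\widehat{X}$, which becomes a Banach lattice with the extension of $\nu$ as its norm. In particular $\widehat{X}$ is finitely uniformly complete and admits the positively homogeneous continuous function calculus, so both sides of~\eqref{Defn:Dconvex} make sense in $\widehat{X}$. Moreover, Corollary~\ref{CorFunctionCalcCommutesWithLatHom}, applied to the inclusion $X\hookrightarrow\widehat{X}$, shows that the function calculus of $\widehat{X}$ agrees with that of $X$ on $m$-tuples from $X$, so there is no ambiguity in expressions of the form $g(x_1,\ldots,x_m)$ for $x_j\in X$.

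To verify the $\mathcal{D}$-convexity inequality on $\widehat{X}$, I fix $m$ and $g\in\mathcal{G}_m$ and run a density argument. Given positive $y_1,\ldots,y_m\in\widehat{X}_+$ (pairwise disjoint if $\vartheta(g)=0$), I will produce approximating tuples $(x_k^1,\ldots,x_k^m)$ in $X_+$ (also pairwise disjoint in the $\vartheta(g)=0$ case) with $x_k^j\to y_j$, apply the hypothesized $\mathcal{D}$-convex inequality inside $X$, and let $k\to\infty$. The limit on the left-hand side is provided by Corollary~\ref{Continuitylemma}, applied in the Banach lattice $\widehat{X}$, which gives $g(x_k^1,\ldots,x_k^m)\to g(y_1,\ldots,y_m)$ in norm; the limit on the right uses continuity of $g$ and of $\nu$. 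If $\vartheta(g)=1$ there is no further work: since $X$ is norm-dense in $\widehat{X}$ and $z\mapsto z^+$ is continuous, $X_+$ is norm-dense in $\widehat{X}_+$.

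The main obstacle is the case $\vartheta(g)=0$, in which pairwise disjoint tuples in $\widehat{X}_+$ must be approximated by pairwise disjoint tuples in $X_+$, and mere density in $\widehat{X}_+$ is not enough. My plan is to perform a disjointification inside the function calculus of $X$ itself. For each $j$, the function
\[ \psi_j(t_1,\ldots,t_m) = \Bigl(t_j-\sum_{l\neq j}t_l\Bigr)^+ \]
lies in $\mathcal{H}_m$, so starting from arbitrary $\tilde{x}_k^j\in X_+$ with $\tilde{x}_k^j\to y_j$ in $\widehat{X}$, I set $x_k^j := \psi_j(\tilde{x}_k^1,\ldots,\tilde{x}_k^m)\in X_+$. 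Three elementary observations then complete the argument: (i) for any $t\in\mathbb{R}_+^m$ and $i\neq j$, $\psi_j(t)>0$ forces $t_j>\sum_{l\neq j}t_l\ge t_i$, whence $\psi_i(t)=0$, so $\psi_i\wedge\psi_j$ vanishes on $\mathbb{R}_+^m$ and hence $x_k^j\wedge x_k^i=0$ in $X$; (ii) when $y_1,\ldots,y_m$ are pairwise disjoint positives in $\widehat{X}$, the elements $y_j$ and $\sum_{l\neq j}y_l$ are themselves disjoint positives, so $\psi_j(y_1,\ldots,y_m)=y_j$; (iii) Corollary~\ref{Continuitylemma}, applied in $\widehat{X}$ to $\psi_j$, yields $x_k^j\to\psi_j(y_1,\ldots,y_m)=y_j$ in norm. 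With these pairwise disjoint approximations in $X_+$ in hand, the limit argument from the previous paragraph applies uniformly in both cases and delivers the required inequality for $\widehat{X}$.
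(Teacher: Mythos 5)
Your proof is correct and follows essentially the same route as the paper's: pass to the norm completion, use \Cref{Continuitylemma} to take limits in the convexity inequality, and in the case $\vartheta(g)=0$ replace the approximating tuple by a pairwise disjoint one built from lattice operations (the paper uses $x_k^i=\bigwedge_{j\neq i}\bigl(y_k^i-y_k^i\wedge y_k^j\bigr)$ where you use $\bigl(\tilde x_k^j-\sum_{l\neq j}\tilde x_k^l\bigr)^+$; both work). One small point in your step (i): $\psi_i\wedge\psi_j$ vanishes on $\mathbb{R}_+^m$ but not on all of $\mathbb{R}^m$ when $m\ge 3$, so to conclude $x_k^i\wedge x_k^j=0$ you should either note that the function calculus at a positive tuple factors through $t\mapsto(t_1^+,\dots,t_m^+)$ (by the uniqueness in \Cref{NJLlemma2Aug}\eqref{NJLlemma2AugIIII}), or argue directly that $x_k^j\le\bigl(\tilde x_k^j-\tilde x_k^i\bigr)^+$ and $x_k^i\le\bigl(\tilde x_k^i-\tilde x_k^j\bigr)^+$, which are disjoint.
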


\begin{proof} 
  Suppose that $\mathcal{D}=(\mathcal{G},M,\vartheta)$, and let~$X$ be
  a finitely uniformly complete Ar\-chi\-me\-dean vector lattice
  endowed with a $\mathcal{D}$-convex lattice
  norm~$\norm{\,\cdot\,}$.  Its completion~$\widetilde{X}$ is a
  Banach lattice, so we just need to verify that~$\widetilde{X}$ is
  $\mathcal{D}$\nobreakdash-con\-vex.  Suppose that
  $g\in\mathcal{G}_m$ for some $m\in\mathbb N$.
 
  We begin with the case $\vartheta(g)=1$ as it is easier. Given
  $x^1,\ldots,x^m\in\widetilde{X}_+$, choose sequences
  $(x_k^1),\ldots, (x_k^m)$ in~$X_+$ which converge in norm
  to~$x^1,\ldots,x^m$, respectively. The $\mathcal{D}$-convexity of
  the norm on~$X$ coupled with the continuity of~$g$ implies that
  \begin{displaymath}
    \bignorm{g(x_k^1,\ldots,x_k^m)}\le
    M(g)\cdot g\bigl(\norm{x_k^1},\ldots,\norm{x_k^m}\bigr)
    \xrightarrow{k\to \infty}
    M(g)\cdot g\bigl(\norm{x^1},\ldots,\norm{x^m}\bigr).
  \end{displaymath}
  Now~\eqref{Defn:Dconvex} follows because the left-hand side
  converges to $\bignorm{g(x^1,\ldots,x^m)}$ by
  \Cref{Continuitylemma}.

  Suppose instead that $\vartheta(g)=0$, and let
  $x^1,\ldots,x^m\in\widetilde{X}_+$ be pairwise disjoint. As before,
  choose sequences $(y_k^1),\ldots, (y_k^m)$ in~$X_+$ which converge
  in norm to~$x^1,\ldots,x^m$, respectively.  We can ``disjointify''
  these sequences by defining
  \begin{displaymath}
    x_k^i=\bigwedge\bigl\{y_k^i-y_k^i\wedge y^j_k \mid  1\le j\le m,\, j\ne i\bigr\}    
  \end{displaymath}
  for every $k\in\mathbb N$ and $i=1,\ldots,m$.  Then
  $x_k^1,\ldots,x_k^m$ are pairwise disjoint for every
  $k\in\mathbb N$, and the sequence $(x_k^i)$ converges in norm
  to~$x^i$ for each $i=1,\ldots,m$ because the lattice
  operations are continuous and $x^1,\ldots,x^m$ are pairwise
  disjoint.  We can now complete the proof as in the case
  $\vartheta(g)=1$.
\end{proof}
  
\begin{thm}\label{D-convex exists}
  Let $E$ be a Banach space and~$\mathcal{D}$ a convexity condition,
  as defined above. There exists a pair
  $\bigl(\fbl^{\mathcal{D}}[E],\phi_E^\mathcal{D}\bigr),$ where
  $\fbl^{\mathcal{D}}[E]$ is a $\mathcal{D}$-convex Banach lattice and
  $\phi_E^\mathcal{D}\colon E\to\fbl^{\mathcal{D}}[E]$ a linear
  isometry, with the following universal property: For every
  $\mathcal{D}$-convex Banach lattice $X$ and every operator
  $T\colon E \to X,$ there exists a unique lattice homomorphism
  $\widehat{T}\colon \fbl^{\mathcal{D}}[E] \to X$ such that
  $\widehat{T}\circ \phi_E^\mathcal{D}=T,$ i.e., the following diagram
  commutes:
  \begin{center}
    \tikzset{node distance=2cm, auto}
    \begin{tikzpicture}
      \node (C) {$\fbl^{\mathcal{D}}[E]$};
      \node (P) [below of=C] {$E$};
      \node (Ai) [right of=P] {$X.$};
      \draw[->, dashed] (C) to node {$\widehat{T}$} (Ai);
      \draw[<-] (C) to node [swap] {$\phi_E^\mathcal{D}$} (P);
      \draw[->] (P) to node [swap] {$T$} (Ai);
    \end{tikzpicture}
  \end{center}
  Moreover, $\norm{\widehat{T}}=\norm{T}$.
\end{thm}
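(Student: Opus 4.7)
The plan is to follow the approach of \cite{T19} but, to sidestep the function-calculus subtleties of $\operatorname{FVL}[E],$ to start from the already-existing $\fbl[E]$ of~\cite{ART}, which is automatically finitely uniformly complete. Let $\mathcal{F}$ denote (a set of representatives of) the class of pairs $(X,T)$ with $X$ a $\mathcal{D}$-convex Banach lattice and $T\colon E\to X$ an operator of norm at most~$1$; for each such pair, the universal property of $\fbl[E]$ produces a unique lattice homomorphism $\widehat{T}\colon\fbl[E]\to X$ extending~$T,$ with $\bignorm{\widehat{T}}\leq 1$. Define
\begin{equation*}
  \nu(y) = \sup\bigl\{\bignorm{\widehat{T}(y)}_X \mid (X,T)\in\mathcal{F}\bigr\},\qquad y\in\fbl[E].
\end{equation*}
Since $\bignorm{\widehat{T}(y)}_X\leq\norm{y}_{\fbl[E]},$ the supremum is finite and $\nu$ is a lattice seminorm on~$\fbl[E]$. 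The candidate $\fbl^{\mathcal{D}}[E]$ will be the Banach-lattice completion of $\fbl[E]/\ker\nu$ with respect to the induced norm.

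Three key properties of $\nu$ need to be verified. First, $\nu$ is $\mathcal{D}$-convex: for $g\in\mathcal{G}_m$ and $x_1,\dots,x_m\in\fbl[E]_+$ (pairwise disjoint if $\vartheta(g)=0$), \Cref{CorFunctionCalcCommutesWithLatHom} yields $\widehat{T}(g(x_1,\dots,x_m))=g(\widehat{T}x_1,\dots,\widehat{T}x_m),$ with $\widehat{T}x_1,\dots,\widehat{T}x_m\in X_+$ remaining pairwise disjoint, so $\mathcal{D}$-convexity of~$X$ together with monotonicity of $g$ on $\mathbb{R}_+^m$ gives
\begin{equation*}
  \bignorm{\widehat{T}(g(x_1,\dots,x_m))}_X\leq M(g)\cdot g\bigl(\nu(x_1),\dots,\nu(x_m)\bigr),
\end{equation*}
which then passes to the supremum. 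Second, $\nu(\phi_E(x))=\norm{x}_E$: the bound $\leq$ is immediate from $\bignorm{\widehat{T}\phi_E(x)}=\norm{Tx}\leq\norm{x},$ and for the reverse one takes $X=C(B_{E^*})$ (which is $\mathcal{D}$-convex with constant~$1,$ since pointwise application of~$g$ and its monotonicity on $\mathbb{R}_+^m$ give $\norm{g(f_1,\dots,f_m)}_\infty\leq g(\norm{f_1}_\infty,\dots,\norm{f_m}_\infty)$) together with the canonical isometric embedding $E\hookrightarrow C(B_{E^*})$. Third, the quotient $\fbl[E]/\ker\nu$ is finitely uniformly complete: the function calculus descends from $\fbl[E]$ through the quotient map, well-definedness being a consequence of the uniqueness in \Cref{NJLlemma2Aug}\eqref{NJLlemma2AugIIII}, so the characterization of \cite[Theorem~1.3]{LT} applies. \Cref{lemma:DconvexCompletion} then makes $\fbl^{\mathcal{D}}[E]$ a $\mathcal{D}$-convex Banach lattice, and the composition $\phi_E^{\mathcal{D}}\colon E\xrightarrow{\phi_E}\fbl[E]\to\fbl[E]/\ker\nu\hookrightarrow\fbl^{\mathcal{D}}[E]$ is an isometry by the second property.

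For the universal property, given a $\mathcal{D}$-convex Banach lattice $X$ and an operator $T\colon E\to X,$ rescale so that $\norm{T}=1$; then $\bignorm{\widehat{T}(y)}\leq\nu(y)$ by construction, so $\widehat{T}$ descends through $\ker\nu$ and extends by continuity to a lattice homomorphism $\widehat{T}\colon\fbl^{\mathcal{D}}[E]\to X$ of norm at most~$1,$ which combined with $\phi_E^{\mathcal{D}}$ being isometric yields $\bignorm{\widehat{T}}=\norm{T}$. Uniqueness is immediate from the fact that $\phi_E^{\mathcal{D}}(E)$ generates a dense sublattice of $\fbl^{\mathcal{D}}[E]$. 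The main obstacle I anticipate is the third verification above---propagating the positively homogeneous continuous function calculus cleanly through the lattice-seminorm quotient $\fbl[E]/\ker\nu$. This is precisely the ``additional work for function calculus'' flagged in the introduction, whereas the rest is a routine adaptation of the construction of~$\fbl[E]$ from~\cite{T19}.
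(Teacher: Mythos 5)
Your proposal is correct in substance but follows a genuinely different, though closely parallel, route. The paper also builds the norm as a supremum of ``test'' seminorms and then completes, but it works inside $\mathbb R^{B_{E^*}}$ on the hand-made sublattice $Y_0=\bigl\{h(\delta_{x_1},\dots,\delta_{x_m})\bigr\}$, which is closed under the pointwise function calculus by construction, and it takes the supremum over the collection $\mathcal N_{\mathcal D}$ of \emph{all} $\mathcal D$-convex lattice seminorms $\nu$ on $Y_0$ with $\nu(\delta_x)\le\norm{x}$ --- manifestly a set --- with the evaluation seminorms $\nu_{x^*}(f)=\abs{f(x^*)}$ witnessing both that $\mathcal N_{\mathcal D}$ is rich enough and that the supremum is a norm rather than a seminorm. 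You instead start from the Banach lattice $\fbl[E]$ (so function calculus comes for free) and take the supremum over target pairs $(X,T)$; these produce exactly the seminorms $y\mapsto\bignorm{\widehat{T}y}$ which the paper's universal-property argument shows lie in $\mathcal N_{\mathcal D}$, so your norm is the supremum over a subfamily that still does the job. Two points in your version need to be made precise. First, the pairs $(X,T)$ form a proper class while the universal property quantifies over all of them, so you must check that a set of representatives computes the same supremum: replace $X$ by the closed sublattice generated by $T(E)$, which is again $\mathcal D$-convex (as the paper notes immediately after the definition of $\mathcal D$-convexity) and has density character at most $\max(\aleph_0,\operatorname{dens}E)$, so that up to isometric lattice isomorphism over $E$ such pairs do form a set; the paper's seminorm formulation sidesteps this entirely. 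Second, your anticipated ``main obstacle'' --- propagating the function calculus through $\fbl[E]/\ker\nu$ --- in fact evaporates: your own $C(B_{E^*})$ pair (or the scalar pairs $(\mathbb R,x^*)$ for $x^*\in B_{E^*}$) already gives $\nu(f)\ge\norm{f}_\infty$ for every $f\in\fbl[E]$, so $\ker\nu=\{0\}$ and no quotient is needed; and even if one keeps the quotient, $\nu\le\norm{\cdot}_{\fbl[E]}$ makes $\ker\nu$ a closed ideal, so the quotient is itself a Banach lattice and finitely uniform completeness is automatic (one should then still transfer the $\mathcal D$-convexity inequality to the quotient, e.g.\ by factoring each $\widehat T$ through it, since producing disjoint positive lifts requires a word, as in \Cref{quotients}). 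With these repairs your argument is a valid alternative proof; what the paper's version buys is a set-indexed supremum and a norm from the outset, at the cost of verifying by hand that $Y_0$ is closed under function calculus and that $\norm{\cdot}_{\mathcal D}$ is finite.
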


\begin{proof}
  Throughout this proof, we work in the Archimedean vector
  lattice~$\mathbb R^{B_{E^*}}$ of real-valued functions defined on
  the closed unit ball~$B_{E^*}$ of the dual space~$E^*$. Being a
  vector lattice of functions, $\mathbb R^{B_{E^*}}$~admits a
  positively homogeneous continuous function calculus which is defined
  pointwise, that is,
  \begin{equation}\label{DconvexExistsEq1}
    \bigl(h(f_1,\ldots,f_m)\bigr)(x^*) = h\bigl(f_1(x^*),\ldots,f_m(x^*)\bigr)    
  \end{equation}
  for $m\in\mathbb N$, $h\in\mathcal{H}_m$,
  $f_1,\ldots,f_m\in\mathbb R^{B_{E^*}}$, and $x^*\in B_{E^*}$.

  Regarding the point evaluations $\delta_x\colon x^*\mapsto x^*(x)$
  for $x\in E$ as elements of~$\mathbb R^{B_{E^*}}$, we can define
  \begin{equation}\label{defn:Y0}
    Y_0=\Bigl\{h(\delta_{x_1},\ldots,\delta_{x_m})\mid  m\in\mathbb N,\,
    h\in\mathcal{H}_m,\, x_1,\ldots,x_m\in E\Bigr\}\subseteq\mathbb R^{B_{E^*}}.
  \end{equation}
  The set $\bigcup_{m=1}^\infty\mathcal{H}_m$ is closed under
  compositions in the sense that it contains the function
  \begin{displaymath}
    (t_1^1,\ldots,t_{m_1}^1,\ldots,t_1^n,\ldots,t_{m_n}^n)\mapsto
    h\bigl(g_1(t_1^1,\ldots,t_{m_1}^1),\ldots,g_n(t_1^n,\ldots,t_{m_n}^n)\bigr) 
  \end{displaymath}
  whenever $n,m_1,\ldots,m_n\in\mathbb N$, $h\in\mathcal{H}_n$, and
  $g_1\in\mathcal{H}_{m_1},\ldots,g_n\in\mathcal{H}_{m_n}$.  This
  implies that~$Y_0$ defined above is closed under positively
  homogeneous continuous function calculus. It follows in particular
  that~$Y_0$ is a sublattice of $\mathbb R^{B_{E^*}}$ because the
  functions $(s,t)\mapsto s\vee t$ and $(s,t)\mapsto s+\alpha t$ for
  $\alpha\in\mathbb R$ both belong to~$\mathcal{H}_2$.

  Let $\mathcal{N}_{\mathcal{D}}$ denote the collection of all
  $\mathcal D$-convex lattice seminorms $\nu\colon Y_0\to[0,\infty)$
  which satisfy
  \begin{equation}\label{Eq:nuofxhat}
    \nu(\delta_{x})\le\norm{x}
  \end{equation}
  for every $x\in E$, and define
  \begin{equation}\label{Eq:DnormY0}
    \norm{f}_{\mathcal{D}}
    =\sup\bigl\{\nu(f)\mid \nu\in\mathcal{N}_{\mathcal{D}}\bigr\}
  \end{equation}
  for $f\in Y_0$. We claim that this quantity is finite. Indeed,
  \eqref{defn:Y0} implies that $f=h(\delta_{x_1},\dots,\delta_{x_m})$
  for some $m\in\mathbb N$, $h\in\mathcal{H}_m$, and
  $x_1,\dots,x_m\in E$.  Then, applying
  \Cref{NJLlemma2Aug}\eqref{NJLlemma2AugIII}, we obtain
  $\norm{f}_e\le\norm{h}_{\mathcal H_m}$, where
  $e=\bigvee_{i=1}^m\abs{\delta_{x_i}}$. By the
  definition~\eqref{eq:IeNorm} of the norm~$\norm{\,\cdot\,}_e$,
  this means that
  \begin{displaymath}
    \abs{f}\le\norm{h}_{\mathcal{H}_m}e
    \le\norm{h}_{\mathcal{H}_m}\sum_{i=1}^m\abs{\delta_{x_i}}, 
  \end{displaymath}
  and therefore we have
  \begin{displaymath}
    \nu(f)\le\norm{h}_{\mathcal{H}_m}\sum_{i=1}^m
    \nu\bigl(\abs{\delta_{x_i}}\bigr)
    \le\norm{h}_{\mathcal{H}_m}\sum_{i=1}^m\norm{x_i}
  \end{displaymath}
  for every $\nu\in\mathcal{N}_{\mathcal{D}}$.  Since the right-hand
  side of this estimate is independent of~$\nu$, we conclude that
  $\norm{f}_{\mathcal D}$ is finite, as claimed.
      
  Consequently, being the supremum over a family of
  $\mathcal{D}$-convex lattice seminorms,
  $\norm{\,\cdot\,}_{\mathcal{D}}$ is itself a $\mathcal{D}$-convex
  lattice seminorm. It is in fact a norm, as we shall show next.  For
  every $x^*\in B_{E^*}$, we can define a lattice seminorm~$\nu_{x^*}$
  on~$Y_0$ by
  \begin{equation}\label{Defn:nux*}
    \nu_{x^*}(f) =\bigabs{f(x^*)}.
  \end{equation}
  Clearly~$\nu_{x^*}$ satisfies~\eqref{Eq:nuofxhat}.  Moreover,
  using~\eqref{DconvexExistsEq1}, we find
  \begin{displaymath}
    \nu_{x^*}\bigl(g(f_1,\ldots,f_m)\bigr)
    =\Bigabs{g\bigl(f_1(x^*),\ldots,f_m(x^*)\bigr)}
    =g\bigl(\nu_{x^*}(f_1),\ldots,\nu_{x^*}(f_m)\bigr)  
  \end{displaymath}
  for every $m\in\mathbb N$, $g\in\mathcal{H}_m^{>0}$ and
  $f_1,\ldots,f_m\in (Y_0)_+$. This implies that~$\nu_{x^*}$ is
  $\mathcal{D}$\nobreakdash-con\-vex because the constant $M(g)$
  in~\eqref{Defn:Dconvex} is at least~$1$, and therefore
  $\nu_{x^*}\in\mathcal{N}_\mathcal{D}$.
  
  Suppose that $f\in Y_0$ with $\norm{f}_{\mathcal{D}}=0$. Then
  $0=\nu_{x^*}(f) = \bigabs{f(x^*)}$ for every $x^*\in B_{E^*}$, so
  $f=0$.  Hence~$\norm{\,\cdot\,}_{\mathcal{D}}$ is a
  $\mathcal{D}$-convex lattice norm on~$Y_0$.  We can now define
  $\fbl^{\mathcal{D}}[E]$ as the com\-ple\-tion of~$Y_0$
  with respect to this norm.

  Lemma~\ref{lemma:DconvexCompletion} shows that
  $\fbl^{\mathcal{D}}[E]$ is a $\mathcal{D}$-convex
  Banach lattice. Moreover, the map
  $\phi_E^{\mathcal{D}}\colon\ E\to
  \fbl^{\mathcal{D}}[E]$ defined by
  $\phi_E^{\mathcal{D}}(x) =\delta_{x}$ is clearly linear. To see that
  it is iso\-metric, for $x\in E$, choose $x^*\in B_{E^*}$ such that
  $x^*(x) = \norm{x}$. Then, using~\eqref{Eq:nuofxhat},
  \eqref{Eq:DnormY0}, and~\eqref{Defn:nux*}, we obtain
  \begin{displaymath}
     \norm{\delta_{x}}_{\mathcal{D}}\le\norm{x}
     =\nu_{x^*}(\delta_{x})\le\norm{\delta_{x}}_{\mathcal{D}},
  \end{displaymath}
  so that $\bignorm{\phi_E^{\mathcal{D}}(x)}_{\mathcal{D}}=\norm{x}$. 

  It remains to verify the universal property. Let $T\colon E\to X$ be
  an operator into a $\mathcal{D}$-convex Banach lattice~$X$.  We may
  suppose that $\norm{T}=1$. Recall from the Introduction that
  the original construction of~$\fbl[E]$
  in~\cite[Section~2]{ART} defines it as a certain sublattice of
  positively homogeneous, real-valued functions defined on~$E^*$.
  Since such a function is uniquely determined by its action
  on~$B_{E^*}$, we may regard~$\fbl[E]$ as a sublattice
  of~$\mathbb R^{B_{E^*}}$ simply by restricting its elements
  to~$B_{E^*}$.  The universal property of~$\fbl[E]$
  means that there is a unique lattice homomorphism
  $S\colon\fbl[E]\to X$ such that $S(\delta_{x}) = Tx$
  for every $x\in E$, and $\norm{S}=1$. (This lattice
  homomorphism is usually denoted ~$\widehat{T}$; we use~$S$ here to
  avoid confusion with the lattice homomorphism that we seek to
  construct.)

  We observe that $\fbl[E]$ contains~$Y_0$ because
  $\delta_x\in\fbl[E]$ for every $x\in E$ and $\fbl[E]$
  is closed under positively homogeneous continuous function calculus
  because it is a Banach lattice. Hence we may consider the
  restriction $S_0\colon Y_0\to X$ of the lattice homomorphism~$S$
  to~$Y_0$. We claim that~$S_0$ is bounded with operator norm at most
  one. To verify this, we observe that $\nu(f) = \norm{S_0f}$ defines
  a lattice seminorm~$\nu$ on~$Y_0$ which
  satisfies~\eqref{Eq:nuofxhat} because
  \begin{displaymath}
    \nu(\delta_x) = \bignorm{S_0(\delta_x)}=\norm{Tx}\le\norm{x}
  \end{displaymath}
  for every $x\in E$. To show that~$\nu$ is $\mathcal{D}$-convex,
  write $\mathcal{D} = (\mathcal{G},M,\vartheta)$, and let
  $m\in\mathbb N$, $g\in\mathcal{G}_m$, and
  $f_1,\ldots,f_m\in (Y_0)_+$, where we assume that $f_1,\ldots,f_m$
  are pairwise disjoint if $\vartheta(g)=0$; note that in that
  case $S_0f_1,\ldots,S_0f_m$ are also pairwise disjoint. Therefore,
  using \Cref{CorFunctionCalcCommutesWithLatHom} and the
  $\mathcal{D}$-convexity of~$X$, we obtain
  \begin{align*}
    \nu\bigl(g(f_1,\ldots,f_m)\bigr) &=
        \Bignorm{S_0\bigl(g(f_1,\ldots,f_m)\bigr)}
        =\bignorm{g(S_0f_1,\ldots,S_0f_m)}\\
     &\le M(g)\cdot g\bigl(\norm{S_0f_1},\ldots,\norm{S_0f_m}\bigr)
       = M(g)\cdot g\bigl(\nu(f_1),\ldots,\nu(f_m)\bigr).
  \end{align*}
  It follows that $\nu\in\mathcal{N}_{\mathcal{D}}$, and therefore
  $\norm{f}_{\mathcal{D}}\ge\nu(f)=\norm{S_0f}$ for every
  $f\in Y_0$, which proves the claim.

  Hence~$S_0$ extends uniquely to a lattice homomorphism
  $\widehat{T}\colon\fbl^{\mathcal{D}}[E]\to X$, and
  $\norm{\widehat{T}}=\norm{S_0}\le 1$. We have
  $\widehat{T}\circ\phi_E^{\mathcal{D}}=T$ because
  $\widehat{T}(\delta_x)=S_0(\delta_x)=Tx$ for every $x\in
  E$. This implies in particular that
  $\norm{\widehat{T}}\ge\norm{T}=1$, so
  $\norm{\widehat{T}}= 1$.

  Finally, to prove the uniqueness of~$\widehat{T}$, suppose that
  $U\colon\fbl^{\mathcal{D}}[E]\to X$ is any lattice
  homomorphism satisfying $U(\delta_x) = Tx$ for every $x\in E$. Then
  \Cref{CorFunctionCalcCommutesWithLatHom} implies that
  \begin{align*}
    U\bigl(h(\delta_{x_1},\ldots,\delta_{x_m})\bigr) &=
      h\bigl(U(\delta_{x_1}),\ldots,U(\delta_{x_m})\bigr)\\
      &= h(Tx_1,\ldots,Tx_m)
      =\widehat{T}\bigl(h(\delta_{x_1},\ldots,\delta_{x_m})\bigr)
  \end{align*}
  for every $m\in\mathbb{N}$, $h\in\mathcal{H}_m$, and
  $x_1,\ldots,x_m\in E$, so~$U$ and~$\widehat{T}$ agree
  on~$Y_0$. Since~$Y_0$ is dense in~$\fbl^{\mathcal{D}}[E]$ and~$U$
  and~$\widehat{T}$ are bounded, we conclude that $U=\widehat{T}$.
\end{proof}

\begin{rem}\label{completions}
  It follows from general principles that the pair
  $\bigl(\fbl^\mathcal{D}[E], \phi_E^\mathcal{D}\bigr)$ constructed in
  \Cref{D-convex exists} is essentially unique.
\end{rem}

\begin{cor}\label{newCor21May} 
  Let $E$ be a Banach space and~$\mathcal{D}$  a convexity condition.
  \begin{enumerate}
  \item\label{newCor21May0} The sublattice generated by the set
    $\{\delta_{x}\mid x\in E\}$ is dense in~$\fbl^{\mathcal{D}}[E]$.
  \item\label{newCor21May1} Suppose that
    $S,T\colon \fbl^{\mathcal{D}}[E]\to X$ are lattice homomorphisms
    into a Banach lattice~$X$ satisfying
    $S\circ\phi_E^\mathcal{D} = T\circ\phi_E^\mathcal{D}$. Then $S=T$.
  \item\label{newCor21May2} $\fbl^\mathcal{D}[E]$ is separable if and
    only if~$E$ is separable.
  \end{enumerate}  
\end{cor}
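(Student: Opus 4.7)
The plan is to dispatch the three parts in sequence, with~(i) doing most of the work and (ii)--(iii) following as short consequences.

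For~(i), since $\fbl^{\mathcal{D}}[E]$ is by construction the completion of the set~$Y_0$ from~\eqref{defn:Y0}, it suffices to show that the sublattice~$L$ generated by $\{\delta_x \mid x\in E\}$ is $\norm{\,\cdot\,}_{\mathcal{D}}$\nobreakdash-dense in~$Y_0$. Given a typical element $f = h(\delta_{x_1},\ldots,\delta_{x_m})\in Y_0$, the plan is to approximate $h$ in $(\mathcal{H}_m,\norm{\,\cdot\,}_{\mathcal{H}_m})$ by lattice polynomials $\ell_n$, which is possible by the density result for $\mathcal{H}_m$ recorded in \Cref{section:Prelim}. By~\eqref{eq:ABsublattice} each $\ell_n(\delta_{x_1},\ldots,\delta_{x_m})$ lies in~$L$, and the estimate
\begin{equation*}
\norm{g(\delta_{x_1},\ldots,\delta_{x_m})}_{\mathcal{D}}\le\norm{g}_{\mathcal{H}_m}\sum_{i=1}^m\norm{x_i}
\end{equation*}
obtained inside the proof of \Cref{D-convex exists}, applied to $g=h-\ell_n\in\mathcal{H}_m$, transfers the convergence to $\norm{\,\cdot\,}_{\mathcal{D}}$.

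For~(ii), the hypothesis $S\circ\phi_E^{\mathcal{D}}=T\circ\phi_E^{\mathcal{D}}$ reads $S(\delta_x)=T(\delta_x)$ for every $x\in E$. The plan is then to note that $S$ and $T$, being lattice homomorphisms between Banach lattices, are positive and hence norm\nobreakdash-continuous by the classical theorem that positive operators between Banach lattices are bounded. Preservation of linear combinations and finite suprema forces $S=T$ on the whole sublattice~$L$, and density from~(i) together with continuity closes the argument.

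Part~(iii) will follow from the same circle of ideas. The forward direction is immediate: $\phi_E^{\mathcal{D}}$ is an isometric embedding, and isometric preimages of separable spaces are separable. For the converse I would fix a countable dense set $D\subseteq E$ and consider the countable set $L_D$ of expressions $\bigvee_{j=1}^{n}\delta_{\sum_i\alpha_{ij}y_i}-\bigvee_{j=1}^{n}\delta_{\sum_i\beta_{ij}y_i}$ with $y_i\in D$ and $\alpha_{ij},\beta_{ij}\in\mathbb{Q}$. Using the isometric identity $\norm{\delta_x-\delta_y}_{\mathcal{D}}=\norm{x-y}$ together with norm\nobreakdash-continuity of scalar multiplication, addition and finite suprema in the Banach lattice $\fbl^{\mathcal{D}}[E]$, one checks that $L_D$ is dense in~$L$, which is itself dense in $\fbl^{\mathcal{D}}[E]$ by~(i).

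I do not foresee any serious obstacle: once \Cref{D-convex exists} and the bound on $\norm{\,\cdot\,}_{\mathcal{D}}$ extracted from its proof are available, the three parts are routine density manipulations. The one mildly delicate point sits in~(i), where the density of lattice polynomials in $\mathcal{H}_m$ is exactly what is needed to bridge the ``function-calculus'' description of~$Y_0$ and the ``purely sublattice'' description of~$L$.
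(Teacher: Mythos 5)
Your proposal is correct, and for part~(i) it takes a more hands-on route than the paper. The paper's argument is purely structural: the closed sublattice of $\fbl^{\mathcal{D}}[E]$ generated by $\{\delta_x \mid x\in E\}$ is itself a Banach lattice, hence closed under positively homogeneous continuous function calculus (by the uniqueness statement in \Cref{NJLlemma2Aug}\eqref{NJLlemma2AugIIII}, cf.\ \Cref{CorSublatticephcfc}), so it contains~$Y_0$, which is dense by construction --- no norm estimate is needed. Your argument instead approximates $h$ by lattice polynomials $\ell_n$ in $\norm{\,\cdot\,}_{\mathcal{H}_m}$ and pushes the convergence through the bound $\norm{g(\delta_{x_1},\ldots,\delta_{x_m})}_{\mathcal{D}}\le\norm{g}_{\mathcal{H}_m}\sum_{i=1}^m\norm{x_i}$ applied to $g=h-\ell_n$; this is valid (the function calculus on $\mathbb{R}^{B_{E^*}}$ is pointwise, hence linear, so $(h-\ell_n)(\delta_{x_1},\ldots,\delta_{x_m})$ really is the difference you need), and it has the minor advantage of yielding an explicit rate of approximation, at the cost of redoing by hand what the abstract closure argument gives for free. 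Both routes ultimately rest on the same fact, namely the density of the sublattice generated by $\pi_1,\ldots,\pi_m$ in~$\mathcal{H}_m$. Your part~(ii) coincides with the paper's (agreement on~$L$ plus automatic boundedness of lattice homomorphisms plus density), and for part~(iii) you supply a self-contained countable-dense-set argument where the paper simply cites an exercise in Aliprantis--Burkinshaw; your argument is fine, using that $x\mapsto\delta_x$ is a linear isometry and that the lattice operations are norm continuous.
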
  

\begin{proof}    
  \eqref{newCor21May0}.  The closure of the sublattice
  of~$\fbl^{\mathcal{D}}[E]$ generated by $\{\delta_{x}\mid x\in E\}$
  is a Banach lattice and thus closed under positively homogeneous
  continuous function calculus. Hence it contains the sublattice~$Y_0$
  defined by~\eqref{defn:Y0}. This proves the claim because~$Y_0$ is
  dense in~$\fbl^{\mathcal{D}}[E]$ by definition.
    
  \eqref{newCor21May1}. This is immediate from~\eqref{newCor21May0}
  because lattice homomorphisms are automatically bounded.

  \eqref{newCor21May2}. This follows by combining~\eqref{newCor21May0}
  with \cite[p.~204, Exercise~9]{AB}.
\end{proof}  

\begin{example}
  Let $p\in[1,\infty]$. Taking
  $\mathcal{D}=(\mathcal{G}^p,M,\vartheta)$ for
  $\mathcal{G}^p=\bigl\{\norm{\,\cdot\,}_{\ell_p^m} \mid m\in
  \mathbb{N}\bigr\}$, $M\equiv C\in [1,\infty)$ and
  $\vartheta \equiv 1$ gives the \term{free $p$-convex Banach lattice
    with $p$\nobreakdash-con\-vex\-ity con\-stant~$C$} (cf.\
  \Cref{RemarkMainCasesofDconvex}). For $C=1$, we denote this space
  by $\fbl^{(p)}[E]$ and observe that, together with the map
  $\phi_E = \phi_E^{\mathcal{D}}$, it has the properties stated in
  \Cref{p-convex exists} because every $p$-convex Banach lattice~$X$
  can be renormed to have $p$\nobreakdash-con\-vex\-ity constant~$1$,
  with the new norm being $M^{(p)}(X)$-equivalent to the original
  norm. An explicit description of $\fbl^{(p)}[E]$ and its norm will
  be given in Sec\-tion~\ref{section:description}.

  Taking instead $\vartheta\equiv 0$ (and~$\mathcal{G}^p$ and
  $M\equiv C\in[1,\infty)$ as above), we obtain the \term{free Banach
    lattice satisfying upper $p$-estimates with constant~$C$}.
\end{example}

\section{Basic properties of $\fbl^\mathcal{D}[E]$}\label{Basic}

The aim of this section is to establish some basic properties of $\fbl^\mathcal{D}[E]$. 
Throughout, $\mathcal{D} = (\mathcal{G},M,\vartheta)$ denotes a convexity condition, and in si\-tua\-tions where no confusion can arise, we will write $\phi_E$ for the canonical map $\phi_E^\mathcal{D}\colon E\to \fbl^\mathcal{D}[E]$. 

\subsection*{Complementation} 
We begin with a generalization of \cite[Corollary 2.7]{ART}. Recall
that a Banach space~$F$ is $C$-isomorphic to a complemented subspace
of a Banach space~$E$ for some constant $C\ge 1$ if there are
operators $U\colon F\to E$ and $V\colon E\to F$ such that
$I_F = V\circ U$ and $\norm{U}\,\norm{V}\le C$, where $I_F$
denotes the identity operator on~$F$. In the case where $E$ and $F$
are Banach lattices, we say that $F$ is \term{$C$-lattice complemented
  in~$E$} if the operators $U$ and $V$ can be chosen to be lattice
homo\-mor\-phisms. Note that the condition $I_F = V\circ U$ implies
that $P:=U\circ V$ is idem\-potent, and~$U$ is an isomorphism of~$F$
onto the range of~$P$.

\begin{prop}
  Let $E$ and $F$ be Banach spaces, where $F$ is $C$-isomorphic
  to a complemented subspace of~$E$ for some constant $C\ge 1$. Then
  $\fbl^{\mathcal{D}}[F]$ is $C$-lattice  complemented in~$\fbl^{\mathcal{D}}[E]$.
\end{prop}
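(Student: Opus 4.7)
The plan is to use the universal property of $\fbl^{\mathcal{D}}[\,\cdot\,]$ twice to lift the given operators $U\colon F\to E$ and $V\colon E\to F$ (with $I_F = V\circ U$ and $\norm{U}\,\norm{V}\le C$) to lattice homomorphisms at the level of the free $\mathcal{D}$-convex Banach lattices, then check that the lifted composition is the identity via the uniqueness clause of \Cref{newCor21May}\eqref{newCor21May1}.

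More precisely, I would first consider the composite $\phi_E^{\mathcal{D}}\circ U\colon F\to\fbl^{\mathcal{D}}[E]$, which is an operator into a $\mathcal{D}$-convex Banach lattice with norm at most $\norm{U}$ (since $\phi_E^{\mathcal{D}}$ is an isometry). Applying the universal property of $\fbl^{\mathcal{D}}[F]$ from \Cref{D-convex exists} produces a unique lattice homomorphism $\widetilde{U}\colon\fbl^{\mathcal{D}}[F]\to\fbl^{\mathcal{D}}[E]$ with $\widetilde{U}\circ\phi_F^{\mathcal{D}}=\phi_E^{\mathcal{D}}\circ U$ and $\bignorm{\widetilde{U}}=\norm{U}$. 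Symmetrically, applying the universal property of $\fbl^{\mathcal{D}}[E]$ to $\phi_F^{\mathcal{D}}\circ V\colon E\to\fbl^{\mathcal{D}}[F]$ gives a unique lattice homomorphism $\widetilde{V}\colon\fbl^{\mathcal{D}}[E]\to\fbl^{\mathcal{D}}[F]$ with $\widetilde{V}\circ\phi_E^{\mathcal{D}}=\phi_F^{\mathcal{D}}\circ V$ and $\bignorm{\widetilde{V}}=\norm{V}$. Multiplying these norm equalities immediately gives $\bignorm{\widetilde{U}}\,\bignorm{\widetilde{V}}=\norm{U}\,\norm{V}\le C$.

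The remaining step is to verify that $\widetilde{V}\circ\widetilde{U}=I_{\fbl^{\mathcal{D}}[F]}$. Chaining the two defining relations yields
\begin{equation*}
  (\widetilde{V}\circ\widetilde{U})\circ\phi_F^{\mathcal{D}}
  =\widetilde{V}\circ\phi_E^{\mathcal{D}}\circ U
  =\phi_F^{\mathcal{D}}\circ V\circ U
  =\phi_F^{\mathcal{D}}\circ I_F
  =\phi_F^{\mathcal{D}},
\end{equation*}
so $\widetilde{V}\circ\widetilde{U}$ and $I_{\fbl^{\mathcal{D}}[F]}$ are two lattice homomorphisms from $\fbl^{\mathcal{D}}[F]$ to itself which agree after precomposition with $\phi_F^{\mathcal{D}}$. \Cref{newCor21May}\eqref{newCor21May1} then forces them to coincide, and this completes the argument.

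I do not anticipate any genuine obstacle here: the proof is essentially a categorical diagram chase, and every required ingredient (existence, norm control, uniqueness on the dense range of $\phi_F^{\mathcal{D}}$) is already packaged in \Cref{D-convex exists} and \Cref{newCor21May}. The only mild care needed is to record the norm bounds $\bignorm{\widetilde{U}}=\norm{U}$ and $\bignorm{\widetilde{V}}=\norm{V}$ correctly, which follow from the fact that $\widehat{T}\circ\phi_E^{\mathcal{D}}=T$ combined with $\bignorm{\widehat{T}}=\norm{T}$ (with $T$ replaced by $\phi_E^{\mathcal{D}}\circ U$ and $\phi_F^{\mathcal{D}}\circ V$, respectively).
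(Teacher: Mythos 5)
Your proposal is correct and follows exactly the same route as the paper: lift $U$ and $V$ via the universal property of \Cref{D-convex exists} to lattice homomorphisms $\widetilde{U}$ and $\widetilde{V}$ with $\bignorm{\widetilde{U}}=\norm{U}$ and $\bignorm{\widetilde{V}}=\norm{V}$, then use \Cref{newCor21May}\eqref{newCor21May1} to conclude $\widetilde{V}\circ\widetilde{U}=I_{\fbl^{\mathcal{D}}[F]}$. No gaps.
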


\begin{proof}
  Let $U\colon F\to E$ and $V\colon E\to F$ be operators such that
  $I_F = V\circ U$ and $\norm{U}\,\norm{V}\le C$. Since
  $\phi_E\circ U\colon F\to \fbl^{\mathcal{D}}[E]$ is an operator into
  a $\mathcal{D}$-convex Banach lattice, there is a unique lattice
  homomorphism
  $\widetilde{U}:=\widehat{\phi_E\circ U}\colon
  \fbl^{\mathcal{D}}[F]\to \fbl^{\mathcal{D}}[E]$ such that
  $\widetilde{U}\circ\phi_F = \phi_E\circ U$, and
  $\norm{\widetilde{U}}=\norm{\phi_E\circ U}=\norm{U}$ (the last
  equality follows because $\phi_E$ is an isometry). Similarly we
  obtain a unique lattice homomorphism
  $\widetilde{V}:=\widehat{\phi_F\circ V}\colon
  \fbl^{\mathcal{D}}[E]\to \fbl^{\mathcal{D}}[F]$ such that
  $\widetilde{V}\circ\phi_E = \phi_F\circ V$, and
  $\norm{\widetilde{V}}=\norm{V}$. Now we check that
  \begin{displaymath}
    \widetilde{V}\circ\widetilde{U}\circ\phi_F
    = \widetilde{V}\circ\phi_E\circ U
    =\phi_F\circ V\circ U = \phi_F,
  \end{displaymath}
  so $\widetilde{V}\circ\widetilde{U}=I_{\fbl^{\mathcal{D}}[F]}$ by
  \Cref{newCor21May}\eqref{newCor21May1}.
\end{proof}  

We next characterize when $\phi_E(E)$ is complemented in its free space, cf.\ \cite[Corollary~2.5]{ART}.

\begin{prop}\label{phi complemented}
  Let $E$ be a Banach space and $C\ge 1$. Then~$E$ is $C$-isomorphic
  to a complemented subspace of a $\mathcal{D}$-convex Banach lattice
  if and only if~$\phi_E (E)$ is $C$\nobreakdash-complemented in
  $\fbl^\mathcal{D}[E]$.
\end{prop}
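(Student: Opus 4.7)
The plan is to use the universal property of $\fbl^\mathcal{D}[E]$ from Theorem~\ref{D-convex exists} to transfer complementation between the Banach-space side and the Banach-lattice side. The argument closely parallels the corresponding result \cite[Corollary~2.5]{ART} in the setting without convexity restrictions.

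The ($\Leftarrow$) direction is essentially tautological. Given a bounded projection $P\colon \fbl^\mathcal{D}[E]\to \fbl^\mathcal{D}[E]$ with range $\phi_E(E)$ and $\norm{P}\le C$, I would take $U:=\phi_E$ and $V:=\phi_E^{-1}\circ P\colon \fbl^\mathcal{D}[E]\to E$; since $\phi_E$ is an isometry onto its range and $P$ acts as the identity on $\phi_E(E)$, it follows that $V\circ U=I_E$ and $\norm{U}\,\norm{V}\le C$. The ambient $\mathcal{D}$-convex Banach lattice is simply $\fbl^\mathcal{D}[E]$ itself.

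For the main ($\Rightarrow$) direction, suppose operators $U\colon E\to X$ and $V\colon X\to E$ witness that $E$ is $C$-isomorphic to a complemented subspace of a $\mathcal{D}$-convex Banach lattice~$X$, so $V\circ U=I_E$ and $\norm{U}\,\norm{V}\le C$. The key step is to apply Theorem~\ref{D-convex exists} to~$U$ in order to obtain a lattice homomorphism $\widehat{U}\colon \fbl^\mathcal{D}[E]\to X$ with $\widehat{U}\circ\phi_E=U$ and $\norm{\widehat{U}}=\norm{U}$. I would then set $P:=\phi_E\circ V\circ \widehat{U}\colon \fbl^\mathcal{D}[E]\to \fbl^\mathcal{D}[E]$. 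Its range lies in $\phi_E(E)$ by construction, and a short computation gives $P\circ\phi_E=\phi_E\circ V\circ U=\phi_E$, so $P$ is the identity on $\phi_E(E)$ and is therefore a bounded projection onto $\phi_E(E)$; the norm bound $\norm{P}\le \norm{\phi_E}\,\norm{V}\,\norm{\widehat{U}}\le C$ is immediate.

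I do not expect any real obstacle. The one subtlety worth flagging is that $V$ is \emph{not} assumed to be a lattice homomorphism, so there is no direct way to lift $V$ backwards through the universal property. What makes the proof go through is that only~$U$ needs to be lifted; $V$ can then be used as an ordinary bounded linear operator sandwiched between the lattice homomorphisms $\widehat{U}$ and $\phi_E$, and the composite~$P$ automatically inherits the required properties from the identity $V\circ U=I_E$.
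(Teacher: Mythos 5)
Your proposal is correct and follows essentially the same route as the paper: for the forward direction the paper also lifts $U$ to $\widehat{U}$ via Theorem~\ref{D-convex exists} and uses the composite $\phi_E\circ V\circ\widehat{U}$ (paired with the inclusion $J\colon\phi_E(E)\to\fbl^{\mathcal D}[E]$ rather than phrased as a projection, which is equivalent), and the converse is the same observation that $\fbl^{\mathcal D}[E]$ itself serves as the ambient $\mathcal D$-convex lattice. Your remark that only $U$, not $V$, needs to be a lattice homomorphism is exactly the point the paper's argument relies on.
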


\begin{proof}
  Suppose that $E$ is $C$-isomorphic to a complemented subspace of a
  $\mathcal{D}$-convex Banach lattice~$X$, so that $I_E=V\circ U$ for
  some operators $U\colon E\to X$ and $V\colon X\to E$ with
  $\norm{U}\,\norm{V}\le C$. Then the inclusion map
  $J\colon\phi_E(E)\to \fbl^\mathcal{D}[E]$ and the composite operator
  $W:=\phi_E\circ V\circ \widehat{U}\colon\fbl^\mathcal{D}[E]\to
  \phi_E(E)$ satisfy $W\circ J=I_{\phi_E (E)}$ and
  $\norm{W}\,\norm{J}\le C$, so $\phi_E(E)$ is
  $C$\nobreakdash-complemented in $\fbl^\mathcal{D}[E]$.

  The converse is immediate because~$E$ is isometric to~$\phi_E(E)$
  and~$\fbl^\mathcal{D}[E]$ is a $\mathcal{D}$-convex Banach lattice.
\end{proof}

\begin{rem}
  It is a famous open question whether every complemented subspace of
  a Banach lattice is isomorphic to a Banach lattice. \Cref{phi
    complemented} reduces this to a question about free Banach
  lattices, and extends the question to the $\mathcal{D}$-convex case.
\end{rem}

\subsection*{Projectivity}
We shall next study the projective objects in the category of $\mathcal{D}$\nobreakdash-con\-vex Banach lattices, beginning with a 
result which recovers and extends one of the main results of~\cite{LLOT}.

In line with general conventions, we say that a  Banach lattice~$Z$ is \term{projectively universal for the class of separable, $\mathcal{D}$-convex Banach lattices} if~$Z$ is separable and $\mathcal{D}$-convex, and every separable, $\mathcal{D}$-convex Banach lattice~$X$ is lattice isometric to a quotient of~$Z$. Note that this  is  equivalent to the existence of a lattice homomorphism from~$Z$ onto~$X$ which maps the open unit ball of~$Z$ onto the open unit ball of~$X$. 

\begin{thm}\label{Recover Denny}
  The Banach lattice $\fbl^{\mathcal{D}}[\ell_1]$ is projectively
  universal for the class of separable, $\mathcal{D}$-convex Banach
  lattices.
\end{thm}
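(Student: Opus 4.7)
The plan is to verify the three ingredients of projective universality for the class of separable, $\mathcal{D}$-convex Banach lattices in turn: that $\fbl^{\mathcal{D}}[\ell_1]$ is separable, that it is $\mathcal{D}$-convex, and that every separable $\mathcal{D}$-convex Banach lattice $X$ is a lattice quotient of it. The first two are essentially for free: separability follows from $\ell_1$ being separable combined with \Cref{newCor21May}\eqref{newCor21May2}, while $\mathcal{D}$-convexity is built into the defining construction supplied by \Cref{D-convex exists}.

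For the remaining and substantive point, let $X$ be a separable $\mathcal{D}$-convex Banach lattice. I would fix a sequence $(x_n)$ lying in the open unit ball $B_X^\circ$ and dense there, and define a linear map $T\colon\ell_1\to X$ on the unit vector basis by $T(e_n)=x_n$. A standard iteration argument (repeatedly approximating any $x\in B_X^\circ$ by some $x_{n_k}$ with geometrically shrinking error) expresses every $x\in B_X^\circ$ as an absolutely convergent series $\sum_k\alpha_k x_{n_k}$ with $\sum_k\abs{\alpha_k}<1$, thereby showing that $\norm{T}\le 1$ and in fact $T(B_{\ell_1}^\circ)=B_X^\circ$; this is the classical projective universality of $\ell_1$ among separable Banach spaces.

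By \Cref{D-convex exists} the operator $T$ lifts uniquely to a lattice homomorphism $\widehat{T}\colon\fbl^{\mathcal{D}}[\ell_1]\to X$ with $\widehat{T}\circ\phi_{\ell_1}=T$ and $\norm{\widehat{T}}=\norm{T}\le 1$. Since $\phi_{\ell_1}$ is an isometry it maps $B_{\ell_1}^\circ$ into $B_{\fbl^{\mathcal{D}}[\ell_1]}^\circ$, and hence
\[
B_X^\circ \;=\; T(B_{\ell_1}^\circ) \;=\; \widehat{T}\bigl(\phi_{\ell_1}(B_{\ell_1}^\circ)\bigr) \;\subseteq\; \widehat{T}\bigl(B_{\fbl^{\mathcal{D}}[\ell_1]}^\circ\bigr) \;\subseteq\; B_X^\circ,
\]
with the final inclusion coming from $\norm{\widehat{T}}\le 1$. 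Equality throughout shows that $\widehat{T}$ sends the open unit ball onto the open unit ball, which is precisely the reformulation of being a lattice quotient singled out in the paragraph preceding the theorem.

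The only genuinely non-formal ingredient is producing the quotient map $T\colon\ell_1\to X$ onto the open unit ball of $X$; once $T$ is in hand, the lattice-theoretic content is absorbed entirely by the universal property of \Cref{D-convex exists}, so no serious obstacle is anticipated.
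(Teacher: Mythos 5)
Your proposal is correct and follows essentially the same route as the paper: separability via \Cref{newCor21May}\eqref{newCor21May2}, the classical projective universality of $\ell_1$ to obtain $T\colon\ell_1\to X$ mapping the open unit ball onto the open unit ball, and then the lift $\widehat{T}$ from \Cref{D-convex exists}. The paper simply leaves implicit the sandwich argument $B_X^\circ = T(B_{\ell_1}^\circ)\subseteq \widehat{T}\bigl(B^\circ_{\fbl^{\mathcal{D}}[\ell_1]}\bigr)\subseteq B_X^\circ$ that you spell out.
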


\begin{proof}
  Let $X$ be a separable, $\mathcal{D}$-convex Banach lattice. Using
  the separable projective universality of~$\ell_1$, we can find a
  linear surjection $T\colon\ell_1\to X$ which maps the open unit ball
  of~$\ell_1$ onto the open unit ball of~$X$, and hence the induced
  lattice homomorphism
  $\widehat{T}\colon \fbl^{\mathcal{D}}[\ell_1]\to X$ maps the open
  unit ball of~$\fbl^{\mathcal{D}}[\ell_1]$ onto the open unit ball
  of~$X$. That $\fbl^{\mathcal{D}}[\ell_1]$ is separable follows from
  \Cref{newCor21May}\eqref{newCor21May2}.
\end{proof}

\begin{rem}
  In \Cref{Recover Denny}, one can of course replace~$\ell_1$ with any
  separable Banach space which has every separable,
  $\mathcal{D}$-convex Banach lattice as a quotient. For example, one
  can iterate the process to get that $\fbl\bigl[\fbl[\ell_1]\bigr]$ is
  projectively universal for the class of separable Banach lattices.
\end{rem}

\begin{rem}
  Similar arguments establish an analogous result for arbitrary
  density character~$\kappa$, replacing $\ell_1$ with
  $\ell_1(\kappa)$.
\end{rem}

We present the next simple lemma due to its relevance to \Cref{Recover
  Denny}, and because it will be needed for subsequent results. It
shows that the separable, $\mathcal{D}$\nobreakdash-con\-vex Banach
lattices are exactly the lattice quotients of
$\fbl^{\mathcal{D}}[\ell_1]$.

\begin{lem}\label{quotients}
  A quotient of a $\mathcal{D}$-convex Banach lattice by a closed
  ideal is $\mathcal{D}$-convex.
\end{lem}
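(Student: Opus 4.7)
The plan is to transfer the $\mathcal D$-convexity inequality from $X$ to the quotient $X/J$ by lifting positive elements along the quotient map. Let $q\colon X\to X/J$ denote the canonical quotient map; because $J$ is an order ideal, $q$ is a lattice homomorphism of norm one, and so by \Cref{CorFunctionCalcCommutesWithLatHom} it commutes with positively homogeneous continuous function calculus.

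Fix $g\in\mathcal G_m$ and $\bar x_1,\dots,\bar x_m\in(X/J)_+$, assumed pairwise disjoint when $\vartheta(g)=0$. Given $\varepsilon>0$, for each $i$ I would pick $z_i\in X$ with $q(z_i)=\bar x_i$ and $\norm{z_i}<\norm{\bar x_i}+\varepsilon$, and then replace $z_i$ by $y_i:=z_i^+\in X_+$; this still satisfies $q(y_i)=\bar x_i$ (since $q$ is a lattice homomorphism and $\bar x_i\ge 0$) and $\norm{y_i}\le\norm{z_i}<\norm{\bar x_i}+\varepsilon$.

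When $\vartheta(g)=1$, applying the $\mathcal D$-convexity of $X$ directly to $y_1,\dots,y_m$ gives
\[
    \bignorm{g(\bar x_1,\dots,\bar x_m)}_{X/J}
    =\bignorm{q\bigl(g(y_1,\dots,y_m)\bigr)}_{X/J}
    \le\bignorm{g(y_1,\dots,y_m)}_X
    \le M(g)\cdot g\bigl(\norm{y_1},\dots,\norm{y_m}\bigr);
\]
letting $\varepsilon\to 0$ and invoking continuity and monotonicity of $g$ on $\mathbb R_+^m$ yields the desired estimate.

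The case $\vartheta(g)=0$ is the main obstacle: positive lifts of pairwise disjoint elements need not themselves be pairwise disjoint, so the $\mathcal D$-convexity of $X$ cannot be applied to $y_1,\dots,y_m$ directly. The fix is to disjointify exactly as in the proof of \Cref{lemma:DconvexCompletion}: set
\[
    x_i=\bigwedge\bigl\{y_i-y_i\wedge y_j \mid 1\le j\le m,\ j\ne i\bigr\}\in X_+.
\]
Then $x_1,\dots,x_m$ are pairwise disjoint, satisfy $0\le x_i\le y_i$ (so $\norm{x_i}\le\norm{y_i}$), and $q(x_i)=\bar x_i$ because $q$ is a lattice homomorphism with $q(y_i)=\bar x_i$ and the $\bar x_i$ are pairwise disjoint in $X/J$. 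The same chain of inequalities, with $x_i$ in place of $y_i$, then completes the argument.
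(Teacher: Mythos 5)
Your proof is correct and follows essentially the same route as the paper's: lift the positive elements with norm control, apply the $\mathcal D$-convexity of $X$ to the lifts, and push the inequality through the quotient homomorphism via \Cref{CorFunctionCalcCommutesWithLatHom}, letting $\varepsilon\to 0$ at the end. The only difference is that where the paper cites de~Pagter--Wickstead for the existence of pairwise disjoint positive lifts in the $\vartheta(g)=0$ case, you carry out the disjointification explicitly (and correctly) by the same device the paper uses in the proof of \Cref{lemma:DconvexCompletion}.
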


\begin{proof}
  Let $J$ be a closed ideal of a $\mathcal{D}$-convex Banach
  lattice~$X$, and denote the quo\-tient homomorphism by
  $Q\colon X\to X/J$. Further, let $g\in \mathcal{G}_m$ for
  some $m\in\mathbb N$, and take
  $\varphi_1,\dots,\varphi_m\in (X/J)_+$, where we suppose that
  $\varphi_1,\dots,\varphi_m$ are pairwise disjoint if
  $\vartheta (g)=0$.  For $\varepsilon>0$, we can choose
  $x_1,\ldots,x_m\in X_+$ such that
  $\norm{x_k}\le (1+\varepsilon)\norm{\varphi_k}$ and $Qx_k=\varphi_k$ for
  each~$k$, and we can also arrange that $x_1,\ldots,x_m$ are pairwise
  disjoint if $\vartheta (g)=0$ (cf.\ \cite[Section 9]{dePW}).  Then,
  using \Cref{CorFunctionCalcCommutesWithLatHom}, we have
  \begin{multline*}
    \bignorm{g(\varphi_1,\dots,\varphi_m)}
    =\Bignorm{Q\bigl(g(x_1,\dots,x_m)\bigr)}
    \le M(g)\cdot g\bigl(\norm{x_1},\dots,\norm{x_m}\bigr)\\ 
    \le M(g)\cdot g\Bigl((1+\varepsilon)\norm{\varphi_1},
        \dots,(1+\varepsilon)\norm{\varphi_m}\Bigr)
        \xrightarrow{\varepsilon\to 0}
   M(g)\cdot g\bigl(\norm{\varphi_1},\dots,\norm{\varphi_m}\bigr),
  \end{multline*}
  and the conclusion follows.
\end{proof}

The notion of a \term{projective} Banach lattice was introduced in
\cite{dePW}. Informally, a Banach lattice $P$ is projective if every
lattice homomorphism from $P$ to a quotient of a Banach lattice $X$
can be lifted to a lattice homomorphism into $X$, with control of the
norm. As a consequence of the fact that $\ell_1(A)$ is a projective
Banach space for any non\-empty set~$A$; it was shown in \cite{dePW}
that $\fbl\bigl[\ell_1(A)\bigr]$ is a projective Banach lattice. Other
examples of projective Banach lattices include all finite dimensional
Banach lattices \cite{dePW} and $C(K)$ spaces for every compact
Hausdorff space $K$ which is an absolute neighbourhood retract; see
\cite{dePW,AMA}, as well as \cite{AMA2} for more recent related
results.

To conclude this section, we find a Banach space property of $E$ which
characterizes when $\fbl[E]$ is projective. Note in this connection
that it was shown in \cite{AMA} that if $\fbl[E]$ is projective, then
necessarily $E$ has the Schur property. Since we now also have the
spaces $\fbl^\mathcal{D}[E]$ ---~and our characterizations extend to
these spaces in the appropriate way~--- we introduce two new
definitions. Part~\eqref{Pdef1i} extends the definition of projectivity
in \cite{dePW}, while~\eqref{Pdef1ii} is a variant that makes sense for
arbitrary Banach spaces.

\begin{defn}\label{Pdef1}
  \begin{enumerate}
  \item\label{Pdef1i} A Banach lattice $P$ is \term{projective for
      $\mathcal{D}$-convex Banach lattices} if, for every closed
    ideal~$J$ of a $\mathcal{D}$-convex Banach lattice~$X$, every
    lattice homomorphism $T\colon P\to X/J$, and every
    $\varepsilon>0$, there is a lattice homomorphism
    $\widehat{T}\colon P\to X$ such that $T=Q\circ \widehat{T}$ and
    $\norm{\widehat{T}}\le (1+\varepsilon)\norm{T}$, where
    $Q\colon X\to X/J$ denotes the quotient homomorphism.
  \item\label{Pdef1ii} A Banach space $E$ is \term{linearly projective
      for $\mathcal{D}$-convex Banach lattices} if, for every closed
    ideal~$J$ of a $\mathcal{D}$-convex Banach lattice~$X$, every
    operator $T\colon E\to X/J$, and every $\varepsilon>0$, there is
    an operator $\widehat{T}\colon E\to X$ such that
    $T=Q\circ \widehat{T}$ and
    $\norm{\widehat{T}}\le (1+\varepsilon)\norm{T}$, again with
    $Q\colon X\to X/J$ de\-noting the quotient homomorphism.
  \end{enumerate}
\end{defn}

Note that in the above definitions we require that $X$ is
$\mathcal{D}$-convex and this implies that $X/J$ is as well, by
\Cref{quotients}. For convenience, given convexity conditions
$\mathcal{D}$ and~$\mathcal{D}'$, let us write
$\mathcal{D}'\le\mathcal{D}$ whenever $\mathcal{D}$-convexity implies
$\mathcal{D'}$-convexity.  The following result (applied with
$\mathcal{D}=\mathcal{D}'$) clarifies the relationship between these
two notions, as well as a third ``hybrid'' notion.
 
\begin{prop}\label{Projliftstofbl}
  Suppose that~$\mathcal{D}$ and~$\mathcal{D}'$ are convexity
  conditions with $\mathcal{D}'\leq\mathcal{D}$. Then the following
  three conditions are equivalent for a Banach space~$E\colon$
  \begin{enumerate}
  \item\label{Projliftstofbl1} $E$ is linearly projective for
    $\mathcal{D}$-convex Banach lattices.
  \item\label{Projliftstofbl2} $\fbl^\mathcal{D'}[E]$ is projective
    for $\mathcal{D}$-convex Banach lattices.
  \item\label{Projliftstofbl3} For every closed ideal~$J$ of a
    $\mathcal{D}$-convex Banach lattice~$X,$ every lattice
    homomorphism $T\colon \fbl^\mathcal{D'}[E]\to X/J,$ and every
    $\varepsilon>0,$ there is an operator
    $\widehat{T}\colon \fbl^\mathcal{D'}[E]\to X$ such that
    $T=Q\circ \widehat{T}$ and
    $\norm{\widehat{T}}\le(1+\varepsilon)\norm{T},$ where
    $Q\colon X\to X/J$ denotes the quotient homomorphism.
  \end{enumerate}
\end{prop}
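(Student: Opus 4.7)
The plan is to prove the cycle \eqref{Projliftstofbl2} $\Rightarrow$ \eqref{Projliftstofbl3} $\Rightarrow$ \eqref{Projliftstofbl1} $\Rightarrow$ \eqref{Projliftstofbl2}. The implication \eqref{Projliftstofbl2} $\Rightarrow$ \eqref{Projliftstofbl3} is trivial, since a lattice homomorphism is in particular an operator, so the conclusion requested in~\eqref{Projliftstofbl3} is formally weaker than in~\eqref{Projliftstofbl2}.

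For \eqref{Projliftstofbl3} $\Rightarrow$ \eqref{Projliftstofbl1}, I would begin with an operator $T\colon E\to X/J,$ where~$X$ is $\mathcal{D}$-convex and~$J$ is a closed ideal. The key point is that the hypothesis $\mathcal{D}'\le\mathcal{D}$ combined with \Cref{quotients} ensures that $X/J$ is $\mathcal{D}'$-convex, so \Cref{D-convex exists} extends~$T$ to a lattice homomorphism $\widetilde{T}\colon\fbl^{\mathcal{D}'}[E]\to X/J$ satisfying $\widetilde{T}\circ\phi_E^{\mathcal{D}'}=T$ and $\bignorm{\widetilde{T}}=\norm{T}$. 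Invoking~\eqref{Projliftstofbl3} on~$\widetilde{T}$ yields an operator $R\colon\fbl^{\mathcal{D}'}[E]\to X$ with $Q\circ R=\widetilde{T}$ and $\norm{R}\le(1+\varepsilon)\norm{T}$, and then $\widehat{T}:=R\circ\phi_E^{\mathcal{D}'}\colon E\to X$ provides the required lift.

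For \eqref{Projliftstofbl1} $\Rightarrow$ \eqref{Projliftstofbl2}, I would run the same argument in reverse. Starting from a lattice homomorphism $T\colon\fbl^{\mathcal{D}'}[E]\to X/J,$ apply~\eqref{Projliftstofbl1} to the operator $T\circ\phi_E^{\mathcal{D}'}\colon E\to X/J$ to obtain an operator $S\colon E\to X$ with $Q\circ S=T\circ\phi_E^{\mathcal{D}'}$ and $\norm{S}\le(1+\varepsilon)\norm{T}$. Since~$X$ is $\mathcal{D}'$-convex, \Cref{D-convex exists} extends~$S$ to a lattice homomorphism $\widehat{T}\colon\fbl^{\mathcal{D}'}[E]\to X$ with $\bignorm{\widehat{T}}=\norm{S}\le(1+\varepsilon)\norm{T}$. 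To verify that $\widehat{T}$ lifts~$T,$ observe that $Q\circ\widehat{T}$ and~$T$ are lattice homomorphisms from $\fbl^{\mathcal{D}'}[E]$ to $X/J$ agreeing on $\phi_E^{\mathcal{D}'}(E),$ and apply \Cref{newCor21May}\eqref{newCor21May1} to conclude they are equal.

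The argument is essentially a diagram chase, and no step is a real obstacle. The mildly delicate point is the use of $\mathcal{D}'\le\mathcal{D}$ to secure $\mathcal{D}'$-convexity of $X/J$ in the middle implication, which is what allows the universal property of $\fbl^{\mathcal{D}'}[E]$ to be invoked there. The $(1+\varepsilon)$ norm estimate does not compound because \Cref{D-convex exists} extends operators with exact preservation of norm, so the single factor $(1+\varepsilon)$ contributed by the lifting hypothesis carries through.
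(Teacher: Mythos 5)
Your proposal is correct and follows essentially the same argument as the paper: the same three implications (merely listed in a different cyclic order), the same use of \Cref{quotients} and $\mathcal{D}'\le\mathcal{D}$ to secure $\mathcal{D}'$-convexity of $X$ and $X/J$ so that the universal property of $\fbl^{\mathcal{D}'}[E]$ applies, and the same appeal to \Cref{newCor21May}\eqref{newCor21May1} to verify $Q\circ\widehat{T}=T$. The norm bookkeeping, including the observation that \Cref{D-convex exists} preserves norms exactly so the $(1+\varepsilon)$ factor does not compound, also matches the paper.
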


\begin{proof}
  \eqref{Projliftstofbl1}$\Rightarrow$\eqref{Projliftstofbl2}: Suppose
  that $E$ is linearly projective for $\mathcal{D}$-convex Banach
  lattices, and let $T\colon \fbl^\mathcal{D'}[E]\to X/J$ be a lattice
  homomorphism, where~$J$ is a closed ideal of a $\mathcal{D}$-convex
  Banach lattice~$X$.  By the hypothesis, for every $\varepsilon>0$,
  we can lift the operator
  $S:=T\circ\phi_E^{\mathcal{D}'}\colon E \to X/J$ to an operator
  $\widehat{S}\colon E\to X$ with $Q\circ \widehat{S}=S$ and
  $\norm{\widehat{S}}\le(1+\varepsilon)\norm{S}$, where
  $Q \colon X\to X/J$ is the quotient homomorphism. Since~$X$ is
  $\mathcal{D'}$\nobreakdash-con\-vex, \Cref{D-convex exists} implies
  that~$\widehat{S}$ lifts to a lattice homomorphism
  $\widehat{T}\colon \fbl^\mathcal{D'}[E]\to X$ with
  $\widehat{T}\circ\phi^{\mathcal{D}'}_E = \widehat{S}$ and
  $\norm{\widehat{T}}=\norm{\widehat{S}}$. We check
  that~$\widehat{T}$ has the required properties:
  $Q\circ\widehat{T} = T$ by \Cref{newCor21May}\eqref{newCor21May1}
  because
  \begin{displaymath}
    Q\circ\widehat{T}\circ\phi_E^{\mathcal{D}'}
    =Q\circ\widehat{S} = S =T\circ\phi_E^{\mathcal{D}'},   
  \end{displaymath}
  and
  \begin{math}
    \norm{\widehat{T}}\le(1+\varepsilon)\norm{S}
    =(1+\varepsilon)\bignorm{T\circ\phi_E^{\mathcal{D}'}}
    \le(1+\varepsilon)\norm{T}.
  \end{math}
    
  \eqref{Projliftstofbl2}$\Rightarrow$\eqref{Projliftstofbl3} is obvious.
    
  \eqref{Projliftstofbl3}$\Rightarrow$\eqref{Projliftstofbl1}: Suppose
  that~\eqref{Projliftstofbl3} is satisfied, and let
  $T\colon E\to X/J$ be an operator, where~$J$ is a closed ideal of a
  $\mathcal{D}$\nobreakdash-con\-vex Banach lattice~$X$. Using
  \Cref{quotients} and \Cref{D-convex exists}, we can find a lattice
  homomorphism $S\colon\fbl^\mathcal{D'}[E]\to X/J$ with
  $S\circ\phi_E^{\mathcal{D}'} = T$ and
  $\norm{S}=\norm{T}$. By~\eqref{Projliftstofbl3}, for every
  $\varepsilon>0$, there is an operator
  $\widehat{S}\colon\fbl^\mathcal{D'}[E]\to X$ such that
  $S=Q\circ\widehat{S}$ and
  $\norm{\widehat{S}}\le(1+\varepsilon)\norm{S}$. Then the operator
  $\widehat{T}\colon=\widehat{S}\circ \phi_E^{\mathcal{D}'}\colon
  E\to X$ has the desired properties: Its norm is at most
  $(1+\varepsilon)\norm{T}$ and
  \begin{displaymath}
    Q\circ\widehat{T} =  Q\circ \widehat{S}\circ \phi_E^{\mathcal{D}'}
    = S\circ \phi_E^{\mathcal{D}'} = T. \qedhere
  \end{displaymath}
\end{proof}

\begin{rem}
  One may now wonder when $\fbl[E]$ is linearly projective for Banach
  lattices. This will essentially never happen. Indeed, if it were
  then $\fbl[E]$ would have the Schur property, so in particular would
  be order continuous. However, $\fbl[E]$ will not be order continuous
  as long as $\dim E>1$.  It is also not true that linear projectivity
  implies lattice projectivity: $\ell_1(A)$ is a linearly projective
  Banach space, but it follows from \cite[Corollary 10.5]{dePW} that
  it is not a projective Banach lattice when $A$ is uncountable.
\end{rem}

\section{Free AM-spaces}\label{AMstuff}

An \term{AM-space} is a Banach lattice~$X$ for which $\norm{x\vee
y}=\norm{x}\vee\norm{y}$ whenever $x,y\in
X_+$ are disjoint.  A \term{unital AM-space} is a
non\-zero AM-space~$X$ which contains a positive element~$e$ such that
$I_e=X$ and the norm~$\norm{\,\cdot\,}_e$ defined
by~\eqref{eq:IeNorm} is equal to the given norm on~$X$.

Kakutani's famous representation theorem for AM-spaces states that a
Banach lattice is an AM-space if and only if it admits an isometric
lattice homomorphism into~$C(K)$ for some compact Hausdorff space~$K$,
and it is a unital AM-space if and only if it is isometrically lattice
isomorphic to~$C(K)$ for some~$K$ (see, e.g.,
\cite[Theorem~1.b.6]{LT2}).

We begin this section by identifying the convexity
conditions~$\mathcal{D}$ which correspond to AM-spaces, and we then
show that, for a given Banach space~$E$, they all give rise to the
same $\mathcal{D}$-convex free Banach lattice.  For the avoidance of
any doubt in the following definition, recall that
$\bignorm{(s,t)}_{\ell_\infty^2} = \abs{s}\vee\abs{t}$ for
$s,t\in\mathbb R$.

\begin{defn} 
  An \term{AM-convexity condition} is a convexity condition
  $\mathcal{D} = (\mathcal{G},M,\vartheta)$ for which
  $\norm{\,\cdot\,}_{\ell_\infty^2}\in\mathcal{G}$ with
  $M(\norm{\,\cdot\,}_{\ell_\infty^2})=1$.
\end{defn}
  
\begin{lem}\label{LemmaAMchar}
  The following conditions are equivalent for a Banach lattice~$X\colon$
  \begin{enumerate}
  \item\label{LemmaAMchar1} $X$ is an AM-space.
  \item\label{LemmaAMchar3} $X$ is $\mathcal{D}$-convex for every
    convexity condition $\mathcal{D}$.
  \item\label{LemmaAMchar2} $X$ is $\mathcal{D}$-convex for some
    AM-convexity condition $\mathcal{D}$.
  \end{enumerate}  
\end{lem}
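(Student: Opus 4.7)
The plan is to prove the cycle (ii)$\Rightarrow$(iii)$\Rightarrow$(i)$\Rightarrow$(ii), where the first implication is immediate (any AM-convexity condition is in particular a convexity condition, and such exist, e.g.\ take $\mathcal{G}=\{\norm{\,\cdot\,}_{\ell_\infty^2}\}$ with $M\equiv 1$).

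For (iii)$\Rightarrow$(i), suppose $X$ is $\mathcal{D}$-convex for an AM-convexity condition $\mathcal{D}=(\mathcal{G},M,\vartheta)$, and set $g = \norm{\,\cdot\,}_{\ell_\infty^2}\in\mathcal{G}$. Given disjoint $x_1,x_2\in X_+$, note that $g(s,t)=|s|\vee|t|$, so by the uniqueness of function calculus (Corollary~\ref{CorFunctionCalcCommutesWithLatHom} applied to the pointwise calculus on any concrete representation, or directly by \Cref{NJLlemma2Aug}\eqref{NJLlemma2AugIIII}) we have $g(x_1,x_2)=x_1\vee x_2$. The $\mathcal{D}$-convexity inequality (which applies to disjoint positive tuples regardless of whether $\vartheta(g)=0$ or $1$), combined with $M(g)=1$, yields $\norm{x_1\vee x_2}\le\norm{x_1}\vee\norm{x_2}$. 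The reverse inequality holds trivially since the norm is monotone, so $X$ is an AM-space.

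For (i)$\Rightarrow$(ii), the key idea is to reduce to $C(K)$ via Kakutani's representation theorem and exploit that function calculus on $C(K)$ is pointwise. Let $\mathcal{D}=(\mathcal{G},M,\vartheta)$ be an arbitrary convexity condition, fix $g\in\mathcal{G}_m$ and $x_1,\ldots,x_m\in X_+$. By Kakutani's theorem there is a compact Hausdorff space $K$ and an isometric lattice homomorphism $J\colon X\to C(K)$. In $C(K)$ the function calculus is pointwise, so for $t\in K$,
\begin{equation*}
  \bigl(g(Jx_1,\ldots,Jx_m)\bigr)(t)=g\bigl((Jx_1)(t),\ldots,(Jx_m)(t)\bigr).
\end{equation*}
Since $g\in\mathcal{H}_m^{>0}$ is monotone on $\mathbb{R}_+^m$ and $(Jx_k)(t)\in[0,\norm{Jx_k}_\infty]$, this value is bounded above by $g(\norm{Jx_1}_\infty,\ldots,\norm{Jx_m}_\infty)$. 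Taking the supremum over $t\in K$ and using that $J$ is isometric gives
\begin{equation*}
  \bignorm{g(Jx_1,\ldots,Jx_m)}_\infty\le g\bigl(\norm{x_1},\ldots,\norm{x_m}\bigr)\le M(g)\cdot g\bigl(\norm{x_1},\ldots,\norm{x_m}\bigr),
\end{equation*}
since $M(g)\ge 1$. Finally, \Cref{CorFunctionCalcCommutesWithLatHom} gives $J(g(x_1,\ldots,x_m))=g(Jx_1,\ldots,Jx_m)$, and using that $J$ is isometric once more we conclude $\norm{g(x_1,\ldots,x_m)}\le M(g)\cdot g(\norm{x_1},\ldots,\norm{x_m})$, so $X$ is $\mathcal{D}$-convex. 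This verifies both the disjoint and non-disjoint cases simultaneously, so the value of $\vartheta$ is irrelevant.

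The only place where any care is needed is the application of function calculus: one must identify the vector-lattice expression $g(x_1,\ldots,x_m)$ with the pointwise expression in $C(K)$, which is exactly what \Cref{CorFunctionCalcCommutesWithLatHom} provides. Everything else reduces to the elementary monotonicity of $g$ on $\mathbb{R}_+^m$, so there is no serious obstacle.
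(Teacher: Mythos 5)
Your proposal is correct and follows essentially the same route as the paper: the implication from ``some AM-convexity condition'' to AM-space via the function $\norm{\,\cdot\,}_{\ell_\infty^2}$ applied to disjoint positive elements, and the implication from AM-space to ``every convexity condition'' via Kakutani's representation together with the pointwise function calculus on $C(K)$ and the monotonicity of $g$ on $\mathbb{R}^m_+$, are exactly the paper's arguments. The remaining implication is trivial in both treatments, so there is nothing further to add.
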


\begin{proof}
  \eqref{LemmaAMchar1}$\Rightarrow$\eqref{LemmaAMchar3}: Suppose
  that~$X$ is an AM-space, and take an isometric lattice homomorphism
  $T\colon X\to C(K)$ for some compact Hausdorff space~$K$. Let
  $m\in\mathbb N$, $g\in\mathcal{H}_m^{>0}$, and
  $x_1,\ldots,x_m\in X_+$.  Since
  $0\le (Tx_j)(t)\le\norm{Tx_j}_\infty =\norm{x_j}$ for
  every $t\in K$ and $j=1,\ldots,m$, we have
  \begin{displaymath}
    g\bigl(\norm{x_1},\ldots,\norm{x_m}\bigr)
    \ge g\bigl((Tx_1)(t),\ldots,(Tx_m)(t)\bigr)
    = T\bigl(g(x_1,\ldots,x_m)\bigr)(t)\ge 0,
  \end{displaymath}
  where the equality follows from
  \Cref{CorFunctionCalcCommutesWithLatHom} and the fact that the
  positively homogeneous continuous function calculus is defined
  pointwise in~$C(K)$. Taking the supremum over all $t\in K$, we
  obtain
  \begin{displaymath}
    g\bigl(\norm{x_1},\ldots,\norm{x_m})
    \ge\Bignorm{T\bigl(g(x_1,\ldots,x_m)\bigr)}_\infty
    =\bignorm{g(x_1,\ldots,x_m)},
  \end{displaymath}
  which shows that~\eqref{Defn:Dconvex} is satisfied because
  $M(g)\ge 1$. Hence $X$ is $\mathcal{D}$-convex, no matter which
  convexity condition~$\mathcal{D}$ we consider.

  \eqref{LemmaAMchar3}$\Rightarrow$\eqref{LemmaAMchar2} is trivial. 
  
  \eqref{LemmaAMchar2}$\Rightarrow$\eqref{LemmaAMchar1}: Suppose that
  $X$ is $\mathcal{D}$-convex for some AM-convexity
  condition~$\mathcal{D}$. Then we have
  $\norm{x\vee y}\le\norm{x}\vee\norm{y}$ whenever
  $x,y\in X_+$ are disjoint. The opposite inequality is always true by
  monotonicity of the norm, so~$X$ is an AM-space.
\end{proof}

Let $E$ be a Banach space and $\mathcal{D}$ a convexity condition.
\Cref{newCor21May}\eqref{newCor21May0} shows that we may view
$\fbl^\mathcal{D}[E]$ as the completion of the sublattice~$L$
of~$\mathbb R^{B_{E^*}}$ generated by the set
$\{\delta_{x}\mid x\in E\}$ with respect to the
norm~$\norm{\,\cdot\,}_{\mathcal{D}}$ given
by~\eqref{Eq:DnormY0}. In fact $L\subseteq C(B_{E^*})$,
where~$B_{E^*}$ is equipped with the relative weak$^*$ topology,
because~$\delta_x$ is weak$^*$ continuous for every $x\in E$, and
using the seminorms~$\nu_{x^*}$ defined by~\eqref{Defn:nux*} for
$x^*\in B_{E^*}$, we see that
$\norm{f}_\infty\le\norm{f}_\mathcal{D}$ for every
$f\in L$.  Hence the inclusion map
\begin{math}
  \bigl(L,\norm{\,\cdot\,}_{\mathcal{D}}\bigr)\to
  \bigl(\overline{L}^{\norm{\,\cdot\,}_\infty},\norm{\,\cdot\,}_\infty\bigr)
\end{math}
extends to a lattice homomorphism of norm at most~$1$ defined
on~$\fbl^{\mathcal{D}}[E]$. Despite this, is not clear if this map is
injective --- we do not even know this for the free Banach lattice
satisfying an upper $p$-estimate with constant~$1$. It is, however, an
isometric isomorphism provided that~$\mathcal{D}$ is an AM-convexity
condition, as we shall prove next. For that reason, we
term~$\overline{L}^{\norm{\,\cdot\,}_\infty}$ the \term{free AM-space
  over}~$E$.

\begin{thm}\label{freeAM}
  Let $E$ be a Banach space and~$\mathcal{D}$ an AM-convexity
  condition. Then $\fbl^{\mathcal{D}}[E]$ is isometrically lattice
  isomorphic to the $\norm{\,\cdot\,}_\infty$\nobreakdash-closed
  sublattice of $C(B_{E^*})$ generated by $\{\delta_{x}\mid x\in E\}$.
\end{thm}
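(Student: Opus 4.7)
The plan is to use the universal property of $\fbl^{\mathcal{D}}[E]$ to obtain a contractive lattice homomorphism onto the claimed space, and then invoke Kakutani's theorem to upgrade it to an isometry. Let $A = \overline{L}^{\norm{\,\cdot\,}_\infty}$ denote the $\norm{\,\cdot\,}_\infty$\nobreakdash-closure of $L$ in $C(B_{E^*})$. Being a closed sublattice of an AM-space, $A$ is itself an AM-space, hence $\mathcal{D}$-convex by \Cref{LemmaAMchar}. The map $x\mapsto\delta_x$ is an isometric linear operator from $E$ into $A$ by Hahn--Banach, so \Cref{D-convex exists} produces a unique lattice homomorphism $\psi\colon\fbl^{\mathcal{D}}[E]\to A$ with $\psi(\delta_x)=\delta_x$ for every $x\in E$ and $\norm{\psi}\le 1$. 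By \Cref{CorFunctionCalcCommutesWithLatHom}, $\psi$ sends $h(\delta_{x_1},\ldots,\delta_{x_m})\in Y_0\subseteq\fbl^{\mathcal{D}}[E]$ to the identically named function in $A$ computed pointwise on $B_{E^*}$, so $\psi$ restricts to an algebraic isomorphism from the copy of $Y_0$ in $\fbl^{\mathcal{D}}[E]$ onto its copy in $A$. Since $Y_0$ is dense in $\fbl^{\mathcal{D}}[E]$ and its image contains $L$ (which is $\norm{\,\cdot\,}_\infty$-dense in $A$), it suffices to show that the norms $\norm{\,\cdot\,}_{\mathcal{D}}$ and $\norm{\,\cdot\,}_\infty$ agree on $Y_0$.

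The inequality $\norm{f}_\infty\le\norm{f}_{\mathcal{D}}$ is already built into the construction of $\fbl^{\mathcal{D}}[E]$, since each point evaluation $\nu_{x^*}$ from \eqref{Defn:nux*} lies in $\mathcal{N}_{\mathcal{D}}$. For the reverse inequality, the crucial observation is that $\fbl^{\mathcal{D}}[E]$ is itself an AM-space. Indeed, applying the $\mathcal{D}$-convexity inequality with $g=\norm{\,\cdot\,}_{\ell_\infty^2}$ --- which lies in $\mathcal{G}$ with $M(g)=1$ by the definition of an AM-convexity condition --- to disjoint positive pairs yields $\norm{x\vee y}_{\mathcal{D}}\le\norm{x}_{\mathcal{D}}\vee\norm{y}_{\mathcal{D}}$, and the reverse holds by monotonicity of the norm. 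Kakutani's representation theorem then furnishes a compact Hausdorff space $K$ and an isometric lattice embedding $J\colon\fbl^{\mathcal{D}}[E]\to C(K)$.

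For each $k\in K$ the composite $\mathrm{ev}_k\circ J\circ\phi_E^{\mathcal{D}}\colon E\to\mathbb{R}$ is linear with norm at most $1$, so there exists $x_k^*\in B_{E^*}$ with $(\mathrm{ev}_k\circ J)(\delta_x)=x_k^*(x)=\delta_x(x_k^*)$ for every $x\in E$. Since $\mathrm{ev}_k\circ J$ is a lattice homomorphism between finitely uniformly complete Archimedean vector lattices, \Cref{CorFunctionCalcCommutesWithLatHom} yields
\begin{displaymath}
  (\mathrm{ev}_k\circ J)\bigl(h(\delta_{x_1},\ldots,\delta_{x_m})\bigr)
  = h\bigl(x_k^*(x_1),\ldots,x_k^*(x_m)\bigr)
  = h(\delta_{x_1},\ldots,\delta_{x_m})(x_k^*)
\end{displaymath}
for every $m\in\mathbb{N}$, $h\in\mathcal{H}_m$ and $x_1,\ldots,x_m\in E$. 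Hence $(Jf)(k)=f(x_k^*)$ for each $f\in Y_0$, and therefore
\begin{displaymath}
  \norm{f}_{\mathcal{D}} = \norm{Jf}_\infty
  = \sup_{k\in K}\abs{f(x_k^*)}
  \le \sup_{x^*\in B_{E^*}}\abs{f(x^*)} = \norm{f}_\infty.
\end{displaymath}

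The main obstacle is spotting that $\fbl^{\mathcal{D}}[E]$ itself qualifies as an AM-space, which lets Kakutani convert abstract evaluation points $k\in K$ into honest elements $x_k^*\in B_{E^*}$; once this is in hand, \Cref{CorFunctionCalcCommutesWithLatHom} collapses $J$ to genuine pointwise evaluation on $Y_0$, and the desired sup-norm bound drops out. The remainder of the argument is then purely formal once the universal property of $\fbl^{\mathcal{D}}[E]$ and the density of $Y_0$ are invoked.
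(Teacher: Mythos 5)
Your proposal is correct and follows essentially the same route as the paper: both hinge on observing that $\fbl^{\mathcal{D}}[E]$ is an AM-space (the content of \Cref{LemmaAMchar}), embedding it isometrically into some $C(K)$ via Kakutani, and pulling each evaluation functional back through $\phi_E^{\mathcal{D}}$ to a point of $B_{E^*}$ so that the embedding becomes genuine pointwise evaluation, yielding $\norm{f}_{\mathcal{D}}\le\norm{f}_\infty$. The only cosmetic difference is that you verify the pointwise-evaluation identity on all of $Y_0$ via \Cref{CorFunctionCalcCommutesWithLatHom}, whereas the paper works with the sublattice $L$ and the explicit representation $f=\bigvee_j\delta_{x_j}-\bigvee_j\delta_{y_j}$ from~\eqref{eq:ABsublattice}.
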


\begin{proof}
  By the above remarks, it suffices to show that
  $\norm{f}_\infty\ge\norm{f}_\mathcal{D}$ for every
  $f\in L$.  Write~$f$ as
  $f=\bigvee_{j=1}^n\delta_{x_j}-\bigvee_{j=1}^n\delta_{y_j}$,
  where $n\in\mathbb N$ and $x_1,\ldots,x_n,y_1,\ldots,y_n\in E$,
  using~\eqref{eq:ABsublattice}. Since $\fbl^{\mathcal{D}}[E]$ is
  $\mathcal{D}$-convex, \Cref{LemmaAMchar} implies that it is an
  AM-space, so we can find an isometric lattice homomorphism
  $U\colon\fbl^{\mathcal{D}}[E]\to C(K)$ for some compact Hausdorff
  space~$K$. For $t\in K$, let $\eta_t\in B_{C(K)^*}$ be the
  evaluation functional at~$t$, and define
  $x^*=\bigl(U\circ\varphi_E^{\mathcal{D}}\bigr)^*\eta_t\in B_{E^*}$. Then we
  have
  \begin{displaymath}
    \delta_x(x^*)
    =\Bigl\langle x, \bigl(U\circ\varphi_E^{\mathcal{D}}\bigr)^*\eta_t\Bigr\rangle
    =\Bigl\langle\bigl(U\circ\varphi_E^{\mathcal{D}}\bigr)x,\eta_t\Bigr\rangle
    = (U\delta_x)(t) 
  \end{displaymath}
  for every $x\in E$, so that
  \begin{displaymath}
    f(x^*)
    =\bigvee_{j=1}^n\delta_{x_j}(x^*) - \bigvee_{j=1}^n\delta_{y_j}(x^*)
    =\bigvee_{j=1}^n(U\delta_{x_j})(t) - \bigvee_{j=1}^n(U\delta_{y_j})(t)
    = (Uf)(t).
  \end{displaymath}
  It follows that
  \begin{math}
    \norm{f}_\infty\ge\sup_{t\in K}\bignorm{(Uf)(t)}
    =\norm{Uf}_\infty =\norm{ f}_{\mathcal{D}},
  \end{math}
  as required.
\end{proof}

Our next result complements \Cref{freeAM} by identifying the free
unital AM-space over a Banach space~$E$. More precisely, in the light
of Kakutani's representation theorem for unital AM-spaces stated
above, it can be paraphrased as saying that the pair
$\bigl(C(B_{E^*}),\phi_E\bigr)$ is the free unital AM-space over~$E$, where
$B_{E^*}$ is equipped with the relative weak$^*$ topology and
$\phi_E\colon E\to C(B_{E^*})$ denotes the linear isometry given by
$\phi_E(x) = \delta_x$, as usual.

\begin{thm}\label{t:1AM}
  Let $E$ be a Banach space. For every compact Hausdorff space~$K$ and
  every non\-zero operator $T\colon E\to C(K)$, there exists a unique
  lattice homomorphism $\widehat{T}\colon C(B_{E^*})\to C(K)$ such
  that $\widehat{T}\circ\phi_E=T$ and
  $\widehat{T}\mathbb{1}=\mathbb{1}$, where~$\mathbb{1}$ denotes the
  constant function~$1$. Moreover, $\norm{\widehat{T}}=\norm{T}$ and
  $\widehat{T}$ is an algebra homomorphism.
\end{thm}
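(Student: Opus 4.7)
The plan is to realize $\widehat{T}$ as a composition operator induced by a weak$^*$-continuous map $\psi\colon K\to B_{E^*}$. Observe first that any unital lattice homomorphism $\widehat{T}\colon C(B_{E^*})\to C(K)$ with $\widehat{T}\mathbb{1}=\mathbb{1}$ necessarily has operator norm $1$, and $\widehat{T}\circ\phi_E=T$ together with $\phi_E$ being isometric forces $\|T\|\le\|\widehat{T}\|=1$; thus the statement tacitly assumes (or reduces to the case) $\|T\|\le 1$. Under this assumption, for each $t\in K$ set $\psi(t):=T^*\eta_t\in E^*$, where $\eta_t\in C(K)^*$ denotes the evaluation functional at $t$. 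Then $\|\psi(t)\|\le\|\eta_t\|\cdot\|T\|\le 1$, so $\psi(t)\in B_{E^*}$, and $\psi(t)(x)=(Tx)(t)$ for every $x\in E$.

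Next I would verify that $\psi$ is weak$^*$ continuous, which reduces to the continuity of $t\mapsto\psi(t)(x)=(Tx)(t)$ for each fixed $x\in E$; this holds because $Tx\in C(K)$. Consequently, setting $\widehat{T}(f):=f\circ\psi$ defines a bounded linear map $C(B_{E^*})\to C(K)$ of operator norm at most one which, being a composition operator, is simultaneously a unital algebra and a lattice homomorphism. The defining conditions then hold by construction: $\widehat{T}\mathbb{1}=\mathbb{1}$ is automatic, while $(\widehat{T}\delta_x)(t)=\delta_x(\psi(t))=\psi(t)(x)=(Tx)(t)$ yields $\widehat{T}\circ\phi_E=T$. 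Combining this with $\|\widehat{T}\|\ge\|T\|$ (which follows from $\widehat{T}\circ\phi_E=T$ and $\phi_E$ being isometric) gives the norm equality $\|\widehat{T}\|=\|T\|$.

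For uniqueness, suppose $U\colon C(B_{E^*})\to C(K)$ is any lattice homomorphism satisfying $U\mathbb{1}=\mathbb{1}$ and $U\circ\phi_E=T$. The set $\{\delta_x\mid x\in E\}\cup\{\mathbb{1}\}$ separates the points of $B_{E^*}$ (because $E$ separates the elements of $E^*$) and contains the constants, so by the lattice Stone--Weierstrass theorem the sublattice it generates is uniformly dense in $C(B_{E^*})$. Since $U$ and $\widehat{T}$ are continuous lattice homomorphisms that agree on this generating set, they coincide on all of $C(B_{E^*})$. The main obstacle is really the normalization bookkeeping noted at the outset; once that is in hand, the theorem is essentially a manifestation of Gelfand--Kakutani duality, placing unital lattice (equivalently, algebra) homomorphisms $C(B_{E^*})\to C(K)$ in bijection with weak$^*$-continuous maps $K\to B_{E^*}$, with the corresponding composition operator automatically an algebra homomorphism.
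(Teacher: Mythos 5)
Your proposal is correct and follows essentially the same route as the paper: both construct $\widehat{T}$ as the composition operator $f\mapsto f\circ\psi$ induced by the weak$^*$-continuous map $\psi(t)=T^*\eta_t$ from $K$ into $B_{E^*}$, and both obtain uniqueness from the Stone--Weierstrass theorem applied to the sublattice generated by $\{\delta_x\mid x\in E\}\cup\{\mathbb{1}\}$. Your opening remark on the normalization is the same reduction the paper makes by assuming $\norm{T}=1$ at the outset.
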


\begin{proof}
  We may suppose that $\norm{T}=1$. Then the map
  $t\mapsto T^*\eta_t$, where~$\eta_t$ is the evaluation functional
  at~$t$, maps~$K$ into~$B_{E^*}$, and it is continuous with respect
  to the relative weak$^*$ topology on~$B_{E^*}$, so we can define a
  map $\widehat{T}\colon C(B_{E^*})\to C(K)$ by
  $\widehat{T}(f)(t) = f(T^*\eta_t)$ for $f\in C(B_{E^*})$ and
  $t\in K$.  Since the algebraic and lattice operations in
  both~$C(B_{E^*})$ and~$C(K)$ are defined pointwise, it is easy to
  check that $\widehat{T}$ is a lattice and algebra homomorphism with
  $\widehat{T}\mathbb{1}=\mathbb{1}$ (see also \cite[Theorem
  3.2.12]{MN} for a more global picture of these maps). Moreover, we
  have
  \begin{displaymath}
    \bigl(\widehat{T}\circ\phi_E\bigr)(x)(t)
    =\delta_x(T^*\eta_t)=\langle x,T^*\eta_t\rangle = (Tx)(t)    
  \end{displaymath}
  for every $x\in E$ and $t\in K$, so that
  $\widehat{T}\circ\phi_E = T$. This implies in particular that
  $\norm{\widehat{T}}\ge\norm{T}=1$. On the other hand,
  $\bigabs{(\widehat{T}f)(t)}= \bigabs{f(T^*\eta_t)}\le\norm{f}_\infty$
  for every $t\in K$ and $f\in C(B_{E^*})$, so that
  $\norm{\widehat{T}f}_\infty\le\norm{f}_\infty$, and therefore
  $\norm{\widehat{T}}=1$.
  
  Finally, to prove uniqueness, suppose that
  $U\colon C(B_{E^*})\to C(K)$ is any lattice homo\-mor\-phism
  satisfying $U\circ\phi_E=T$ and $U\mathbb{1}=\mathbb{1}$. Then
  $\widehat{T}$ and~$U$ agree on the sub\-lattice of~$C(B_{E^*})$
  generated by $\{\delta_x \mid x\in E\}\cup \{\mathbb{1}\}$.  The
  Stone--Weier\-strass Theorem implies that this sublattice is dense
  in~$C(B_{E^*})$, and therefore, being bounded, $\widehat{T}$ and~$U$
  are equal.
 \end{proof}

\section{An explicit formula for the norm of the free p-convex Banach lattice}\label{section:description}

The aim of this section is to verify the explicit
formula~\eqref{Eq:FBLpNorm} for the norm of the free
$p$\nobreakdash-con\-vex Banach lattice $\fbl^{(p)}[E]$. Throughout,
$p\in (1,\infty)$, $E$~is a Banach space, $H[E]$~denotes the vector
lattice of all positively homogeneous functions
$E^*\to\mathbb R$, $\norm{f}_p$ is defined by~\eqref{Eq:FBLpNorm}
for every $f\in H[E]$, and~$L$ denotes the sublattice of~$H[E]$
generated by the evaluation maps~$\delta_x$ for $x\in E$. Note that
this definition of~$L$ differs slightly from the one we used in the
previous section, where the functions in~$L$ were defined
on~$B_{E^*}$, not~$E^*$. However, as already remarked in the proof of
\Cref{D-convex exists}, this difference is purely formal because a
positively homogeneous function $E^*\to\mathbb R$ is uniquely
determined by its action on~$B_{E^*}$.

To simplify notation, we write 
\begin{equation*}
  \weaksumnorm{p}{x_1^*,\dots,x_n^*}
  =\sup_{x\in B_E}\Bigl(\sum_{k=1}^n \abs{x_k^*(x)}^p\Bigr)^{\frac{1}{p}}
\end{equation*}
for  the weak $p$-summing norm of a finite sequence $(x_k^*)_{k=1}^n$ in~$E^*$. 

Let us begin by trying to motivate the expression~\eqref{Eq:FBLpNorm}
for the norm of $\fbl^{(p)}[E]$.  Consider an operator
$T\colon E\to\ell_p^n$ for some $n\in\mathbb N$. Writing
$(e_k)_{k=1}^n$ for the unit vector basis of $\ell_p^n$, we can
express~$T$ as
\begin{displaymath}
 T(x)=\sum_{k=1}^n x_k^*(x) e_k  
\end{displaymath}
for a certain finite sequence $(x_k^*)_{k=1}^n$ in $E^*$ and every
$x\in E$, and we have $\norm{T}= \weaksumnorm{p}{x_1^*,\ldots,x_n^*}$
in the notation introduced above.  (In fact $x_k^*=T^*e_k^*$, but
this formula will not be helpful for our purposes.) It is easy to
check that the only way to extend~$T$ to a lattice homomorphism
$\widehat{T}\colon L \to\ell_p^n$ is by defining
\begin{displaymath}
 \widehat{T}f=\sum_{k=1}^n f(x_k^*)e_k
\end{displaymath}
for every $f\in L$.  Thus, we must have
\begin{displaymath}
  \Bigl(\sum_{k=1}^n \bigabs{f(x_k^*)}^p\Bigr)^{\frac1p}
  =\norm{\widehat{T}f}_{\ell_p^n}
  \le\norm{\widehat{T}}\,\norm{f}_{\fbl^{(p)}[E]}
  =\weaksumnorm{p}{x_1^*,\ldots,x_n^*}\,\norm{f}_{\fbl^{(p)}[E]}.  
\end{displaymath}
Taking the supremum over all possible choices of the operator~$T$
subject to $\norm{T}\le 1$, we conclude that $\norm{f}_p$ defined
by~\eqref{Eq:FBLpNorm} satisfies the inequality
$\norm{f}_p\le\norm{f}_{\fbl^{(p)}[E]}$. Morally speaking,
establishing equality of these two norms means that extending
operators into arbitrary $p$-convex Banach lattices can in a certain
sense be reduced to the extension of operators into the spaces
$\ell_p^n$ for $n\in\mathbb N$.\smallskip

We now turn to the explicit description of $\fbl^{(p)}[E]$. It is easy  to see that 
\begin{displaymath}
  H_p[E]:=\bigl\{f\in H[E] \mid  \norm{f}_{p} <\infty\bigr\}
\end{displaymath}
is a sublattice of~$H[E]$ and that~$\norm{\,\cdot\,}_p$ defines a
complete $p$-convex lattice norm on~$H_p[E]$ with
$p$\nobreakdash-con\-vex\-ity constant one. Moreover,
$\norm{\delta_x}_p=\norm{x}$ for every $x\in E$,
so~$H_p[E]$ contains~$L$ as a sublattice. Hence we can define
$\fbl_p[E]$ as the closure of~$L$ in~$H_p[E]$, and the map
$\phi_E\colon E\to\fbl_p[E]$ given by $\phi_E(x)=\delta_x$ is a
linear isometry. Note the position of the index~$p$: We
write~$\fbl_p[E]$ for the Banach lattice that we have just defined to
distinguish it from the previously defined Banach
lattice~$\fbl^{(p)}[E]$. However, our next theorem will identify the
pair $(\fbl_p[E],\phi_E)$ as the free $p$-convex Banach lattice
generated by $E$, so once we have proved it, this distinction will no
longer be necessary.

\begin{thm}\label{t:fblb}
  Let $X$ be a $p$-convex Banach lattice and $T\colon E\to X$
  an operator. There is a unique lattice homomorphism
  $\widehat{T}\colon \fbl_p[E]\to X$ such that
  $\widehat{T}\circ\phi_E=T,$ and
  $\norm{\widehat{T}}\le M^{(p)}(X)\,\norm{T},$ where $M^{(p)}(X)$ denotes
  the $p$-convexity constant of $X$.
\end{thm}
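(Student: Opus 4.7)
The plan is to define $\widehat{T}$ on the dense sublattice $L$ via function calculus, establish the norm estimate there, and extend by continuity; uniqueness will then be automatic. For $f=h(\delta_{x_1},\dots,\delta_{x_m})\in L$ with $h$ in the sublattice of $\mathcal{H}_m$ generated by the coordinate projections, set $\widehat{T}(f):=h(Tx_1,\dots,Tx_m)$ via the positively homogeneous continuous function calculus in $X$. Well-definedness follows because function calculus commutes with linear substitutions, a consequence of the uniqueness statement in \Cref{NJLlemma2Aug}\eqref{NJLlemma2AugIIII}: any linear relation among the $x_j$ in $E$ carries over to the $Tx_j$ in $X$ via $T$, so the element of $X$ depends only on $f$, not on the representation. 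By \Cref{CorFunctionCalcCommutesWithLatHom}, $\widehat{T}$ is then a lattice homomorphism on $L$ with $\widehat{T}\circ\phi_E=T$.

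The heart of the proof is the norm estimate $\norm{\widehat{T}(f)}_X\le M^{(p)}(X)\,\norm{T}\,\norm{f}_p$. After renorming $X$ so its $p$-convexity constant equals $1$ and scaling $T$, it suffices to establish $\norm{h(Tx_1,\dots,Tx_m)}_X\le\norm{f}_p$ under the assumptions $M^{(p)}(X)=1$, $\norm{T}\le 1$, and (after replacing $h$ by $|h|$, which leaves both sides unchanged) $h\ge 0$. By duality, it is then enough to bound $y^*(h(Tx_1,\dots,Tx_m))$ for each $y^*\in X^*_+$ with $\norm{y^*}\le 1$. Setting $e:=\bigvee_j|Tx_j|$ and applying Kakutani's representation to identify $(I_e,\norm{\,\cdot\,}_e)$ with $C(K)$ for some compact Hausdorff space $K$, we have $Tx_j\leftrightarrow g_j\in C(K)$ (with $\max_j|g_j(t)|=1$ for every $t\in K$), and pointwise function calculus in $C(K)$ gives $h(Tx_1,\dots,Tx_m)\leftrightarrow h\circ(g_1,\dots,g_m)$; moreover, $y^*|_{I_e}$ corresponds to a positive Radon measure $\mu$ on $K$, so that $y^*(h(Tx_1,\dots,Tx_m))=\int_K h(g_1(t),\dots,g_m(t))\,d\mu(t)$.

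I expect the principal obstacle to be the final bound $\int_K h(g_1(t),\dots,g_m(t))\,d\mu(t)\le\norm{f}_p$. The intended strategy is to discretize $\mu$ as $\sum_k\alpha_k\delta_{t_k}$, use the positive homogeneity of $h$ (degree~$1$) to rewrite $\alpha_k h(g(t_k))=\alpha_k^{1/p'}\cdot h(\alpha_k^{1/p}g(t_k))$, and apply H\"older's inequality with conjugate exponents $p',p$ to obtain
\begin{displaymath}
  \sum_k\alpha_k h(g(t_k))\le \mu(K)^{1/p'}\biggl(\sum_k\bigabs{f(x_k^*)}^p\biggr)^{1/p},
\end{displaymath}
where $x_k^*\in E^*$ is a Hahn--Banach extension of the functional $x_j\mapsto\alpha_k^{1/p}g_j(t_k)$ from $\operatorname{span}\{x_1,\dots,x_m\}$ to $E$. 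The definition of $\norm{f}_p$ then yields the required estimate provided one verifies that $\mu_p\bigl((x_k^*)\bigr)\le 1$ and $\mu(K)^{1/p'}\le 1$ after the appropriate normalizations. This last verification is the subtle step: it requires translating the abstract $p$-convexity of $X$ (equivalently, the $p'$-concavity of $X^*$) --- together with the constraints $\norm{y^*}\le 1$ and $\norm{T}\le 1$ --- into a uniform bound on the pushforward measure on $E^*$ arising from a suitably measurable Hahn--Banach section $t\mapsto x_t^*$. Making this translation rigorous, and in particular aligning the pointwise function calculus in $C(K)$ with the weakly $p$-summing constraint $\sup_{x\in B_E}\int|x^*(x)|^p\,d\nu(x^*)\le 1$ that underlies $\norm{\,\cdot\,}_p$, is the technical crux.

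Once the norm estimate is secured on $L$, $\widehat{T}$ extends continuously and uniquely to $\fbl_p[E]$. Uniqueness in the theorem is then immediate: two lattice homomorphisms $\fbl_p[E]\to X$ that coincide on $\phi_E(E)$ must coincide on $L$ by the lattice-homomorphism property, and therefore on all of $\fbl_p[E]$ by continuity, since lattice homomorphisms between Banach lattices are automatically bounded.
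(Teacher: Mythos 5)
Your skeleton (define $\widehat{T}$ on the dense sublattice $L$, prove the norm estimate $\norm{\widehat{T}f}_X\le M^{(p)}(X)\,\norm{T}\,\norm{f}_p$ there, extend by continuity, deduce uniqueness) matches the paper's, and the first and last steps are fine. But the norm estimate is the entire content of the theorem, and there you have a genuine gap, which you yourself flag as ``the technical crux'' and leave open. The gap is not a technicality: nowhere in your sketch does the $p$-convexity of $X$ actually enter, yet the statement is false without it, so whatever closes the argument must be substantive. Concretely, the route you propose looks unworkable for two reasons. First, after identifying $(I_e,\norm{\,\cdot\,}_e)$ with $C(K)$, the point evaluations $y\mapsto g_y(t_k)$ are bounded with respect to $\norm{\,\cdot\,}_e$, not with respect to the norm of $X$, so the functionals you extend by Hahn--Banach from $\operatorname{span}\{x_1,\dots,x_m\}$ carry no useful norm bound in $E^*$. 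Second, even if each $x_k^*$ were individually controlled, choosing the extensions independently gives no control whatsoever of the joint quantity $\sup_{x\in B_E}\sum_k\abs{x_k^*(x)}^p$ --- the weak $p$-summing norm requires a simultaneous bound over all $x\in B_E$, including $x$ outside the span, and individual Hahn--Banach extensions only yield the useless estimate $\bigl(\sum_k\norm{x_k^*}^p\bigr)^{1/p}$.

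For comparison, the paper resolves exactly this point in two stages. It first proves the estimate for $X=L_p(\mu)$: there the functionals are taken of the form $x^*_{ij}=T^*g_{ij}$ with $g_{ij}\in L_{p^*}(\mu)$ norm-one and \emph{disjointly supported}, hence $1$-equivalent to the unit vector basis of $\ell_{p^*}$, which is precisely what yields $\weaksumnorm{p}{x^*_{11},\dots,y^*_{nn}}\le\norm{T}$ by duality with $\norm{Tx}_{L_p}$. It then reduces a general $p$-convex $X$ to this case: a single norming functional $x^*$ for $\widehat{T}f$ produces an $L_1(\mu)$ quotient, and the Maurey--Nikishin factorization theorem --- this is where $p$-convexity is spent --- supplies a density $h$ so that the quotient map factors through $L_p(h\,d\mu)$ with norm $M^{(p)}(X)$. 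Your dualization $y^*\bigl(h(Tx_1,\dots,Tx_m)\bigr)=\int_K h(g_1,\dots,g_m)\,d\mu$ is essentially the $L_1$ half of this reduction; what is missing is the change of density that upgrades the $L_1$ pairing to an $L_p$ estimate. Without Maurey--Nikishin (or an equivalent substitute), the H\"older step $\sum_k\alpha_k h(g(t_k))\le\mu(K)^{1/p'}\bigl(\sum_k\abs{f(x_k^*)}^p\bigr)^{1/p}$ cannot be matched to the constraint $\weaksumnorm{p}{x_1^*,\dots,x_n^*}\le 1$ defining $\norm{f}_p$.
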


\begin{proof}
  As in the proof of \cite[Theorem~2.5]{ART}, there is a unique
  lattice homo\-mor\-phism \mbox{$\widehat{T}\colon L\to X$} such that
  $\widehat{T}(\delta_x) = Tx$ for every $x\in E$.  Our objective is
  to show that
  \begin{equation}\label{t:fblb:Eq2}
    \norm{\widehat T f}_{X}\le M^{(p)}(X)\,\norm{T}\,\norm{f}_p
  \end{equation}
  for every $f\in L$, as this will ensure that~$\widehat{T}$ extends
  uniquely to a lattice homomorphism defined on all of~$\fbl_p[E]$,
  and the extension has norm at most~$M^{(p)}(X)\,\norm{T}$.

  We split the proof of the inequality~\eqref{t:fblb:Eq2} in two
  parts: First we establish it in the special case where $X=L_p(\mu)$
  for some measure space $(\Omega,\Sigma,\mu)$, and then we show how
  to deduce the general version from the special case.

  Thus, suppose first that $X=L_p(\mu)$ for some measure space
  $(\Omega,\Sigma,\mu)$, and let $f\in L$.
  By~\eqref{eq:ABsublattice}, we can write
  $f=\bigvee_{i=1}^n \delta_{x_i}-\bigvee_{j=1}^n \delta_{y_j}$ for
  some $n\in\mathbb N$ and $(x_i)_{i=1}^n$, $(y_j)_{j=1}^n$ in $E$.
  Consider the family of sets $(A_{ij})_{i,j=1}^n\subset \Sigma$
  defined by
  \begin{displaymath}
    A_{ij}
    =\biggl\{\omega\in\Omega\mid\bigvee_{k=1}^n Tx_k(\omega)=Tx_i(\omega),\
    \bigvee_{l=1}^n Ty_l(\omega)=Ty_j(\omega)\biggr\}.
  \end{displaymath}
  Clearly $\bigcup_{i,j=1}^n A_{ij}=\Omega$.  By a standard
  disjointification process, replacing $A_{ij}$ with
  $A_{ij}\setminus\bigcup_{(k,l)\prec(i,j)} A_{kl}$, where $\prec$ is
  any total order on the index set
  $\bigl\{(i,j)\mid 1\le i,j\le n\bigr\}$, we may arrange that the sets
  $(A_{ij})_{i,j=1}^n$ are pairwise disjoint.

  For every $1\le i,j\le n$, define
  \begin{displaymath}
    A_{ij}^+=\bigl\{\omega\in A_{ij}\mid T(x_i-y_j)(\omega) \ge 0\bigr\}
    \qquad\text{and}\qquad
    A_{ij}^-=A_{ij}\setminus A_{ij}^+,
  \end{displaymath}
  and choose positive functions
  $g_{ij},h_{ij}\in L_{p^*}(\mu)=L_p(\mu)^*$, where
  $p^*\in(1,\infty)$ is the conjugate exponent of~$p$, such
  that
  $\norm{g_{ij}}_{L_{p^*}} = \norm{h_{ij}}_{L_{p^*}} =1$,
  \begin{align*}
    \Bignorm{T(x_i-y_j)\chi_{A_{ij}^+}}_{L_p}
    &=\bigl\langle T(x_i-y_j)\chi_{A_{ij}^+}, g_{ij}\bigr\rangle 
     =\int_{A_{ij}^+}T(x_i-y_j)g_{ij}\,d\mu,\\
  \intertext{and}
    \Bignorm{T(y_j-x_i)\chi_{A_{ij}^-}}_{L_p}
    &=\bigl\langle T(y_j-x_i)\chi_{A_{ij}^-}, h_{ij}\bigr\rangle
      =\int_{A_{ij}^-}T(y_j-x_i)h_{ij}\,d\mu.
  \end{align*}
  We may without loss of generality assume that~$g_{ij}$ and~$h_{ij}$
  are supported in $A_{ij}^+$ and~$A_{ij}^-$, respectively. Then the
  set $\bigl\{g_{ij},h_{ij}\mid 1\le i,j\le n\bigr\}$ is
  $1$-equivalent to the unit vector basis of~$\ell_{p^*}^{2n^2}$, and
  consequently the functionals $x_{ij}^*=T^*g_{ij}\in E^*$ and
  $y_{ij}^*=T^*h_{ij}\in E^*$ satisfy
  \begin{multline*}
     \biggl(\sum_{i,j=1}^n\bigabs{x_{ij}^*(x)}^p+\bigabs{y_{ij}^*(x)}^p\biggr)^{\frac1p}
     =\biggl(\sum_{i,j=1}^n \biggabs{\int_{A_{ij}^+}(Tx)g_{ij}\,d\mu}^p
     +\biggabs{\int_{A_{ij}^-}(Tx)h_{ij}\,d\mu}^p\biggr)^{\frac1p}\\
     =\sup\biggl\{\sum_{i,j=1}^n a_{ij}\int_{A_{ij}^+}(Tx)g_{ij}\,d\mu
     +b_{ij}\int_{A_{ij}^-}(Tx)h_{ij}\,d\mu
     \mid  (a_{ij},b_{ij})_{i,j=1}^n\in B_{\ell_{p^*}^{2n^2}} \biggr\}\\
     \le \sup\bigl\{\langle Tx, g\rangle \mid  g\in B_{L_{p*}(\mu)}\bigr\}
     =\norm{Tx}_{L_p}
  \end{multline*}
  for every $x\in E$. Taking the supremum over $x\in B_E$, we conclude that
  \begin{equation}\label{t:fblb:Eq1}
    \weaksumnorm{p}{x^*_{11},\ldots,x^*_{nn},y^*_{11},\ldots,y^*_{nn}}
    \le\norm{T}.
  \end{equation}
  Since $g_{ij}$ is positive, the definition of $A^+_{ij}$ yields that
  \begin{multline*}
     \bigabs{f(x_{ij}^*)}
     =\biggabs{\bigvee_{k=1}^n\int(Tx_k)g_{ij}\,d\mu
       -\bigvee_{l=1}^n\int(Ty_l)g_{ij}\,d\mu}\\
     =\int T(x_i-y_j)g_{ij}\,d\mu=\bignorm{T(x_i-y_j)\chi_{A_{ij}^+}}_{L_p}
     =\bignorm{(\widehat{T} f )\chi_{A_{ij}^+}}_{L_p}
  \end{multline*}
  and similarly
  $\bigabs{f(y_{ij}^*)}=\bignorm{(\widehat{T} f )\chi_{A_{ij}^-}}_{L_p}$ for every
  $1\le i,j\le n$.  Combining~\eqref{Eq:FBLpNorm}
  and~\eqref{t:fblb:Eq1} with these identities, we deduce that
  \begin{multline*}
    \norm{T}\,\norm{f}_p
    \ge\biggl(\sum_{i,j=1}^n \bigabs{f(x_{ij}^*)}^p
         +\bigabs{f(y_{ij}^*)}^p\biggr)^{\frac1p}\\
    =\biggl(\sum_{i,j=1}^n\bignorm{(\widehat{T} f) \chi_{A_{ij}^+}}^p_{L_p}
         +\bignorm{(\widehat{T} f) \chi_{A_{ij}^-}}^p_{L_p}\biggr)^{\frac1p}
    =\norm{\widehat{T} f}_{L_p},
  \end{multline*}
  which establishes~\eqref{t:fblb:Eq2} for $X=L_p(\mu)$ because
  $M^{(p)}(L_p(\mu))=1$.

  \smallskip

  We are now ready to tackle the general case where~$X$ is an
  arbitrary $p$-convex Banach lattice. Given $f\in L$, choose
  $x^*\in X^*_+$ with $\norm{x^*}=1$ and
  $x^*\bigl(\abs{\widehat{T}f}\bigr)=\norm{\widehat{T}f}_X$. Let
  $N_{x^*}$ denote the null ideal generated by~$x^*$, that is,
  $N_{x^*}=\bigl\{x\in X\mid x^*\bigl(\abs{x}\bigr)=0\bigr\}$, and
  let~$Y$ be the completion of the quotient lattice~$X/N_{x^*}$ with
  respect to the norm
  $\norm{x+N_{x^*}}:=x^*\bigl(\abs{x}\bigr)$. Since this is an
  abstract $L_1$-norm, $Y$ is lattice isometric
  to~$L_1(\Omega,\Sigma,\mu)$ for some measure space
  $(\Omega,\Sigma,\mu)$ (see, e.g., \cite[Theorem~1.b.2]{LT2}). The
  canonical quotient map of $X$ onto $X/N_{x^*}$ induces a lattice
  homomorphism $Q\colon X\rightarrow L_1(\Omega,\Sigma,\mu)$ with
  $\norm{Q}=1$. For our purposes, we may without loss of generality
  assume that $(\Omega,\Sigma,\mu)$ is $\sigma$-finite, passing for
  instance to the band generated by $Q(\widehat{T} f)$.

  Since $Q$ is a lattice homomorphism and $X$ is $p$-convex,  we have
  \begin{displaymath}
    \biggnorm{\Bigl(\sum_{k=1}^n\bigabs{Q(x_k)}^p\Bigr)^{\frac{1}{p}}}_{L_1(\mu)}
    \le\biggnorm{\Bigl(\sum_{k=1}^n\abs{x_k}^p\Bigr)^{\frac{1}{p}}}_X
    \le M^{(p)}(X)\,\Bigl(\sum_{k=1}^n\norm{x_k}_X^p\Bigr)^{\frac{1}{p}}
  \end{displaymath}
  for every $n\in\mathbb N$ and $x_1,\dots, x_n\in X$.  Hence the
  Maurey--Nikishin Factorization Theorem (see, e.g,
  \cite[Theorem~7.1.2.]{alb-kal}, and recall that $p<\infty$) yields a
  positive function $h\in L_1(\Omega,\Sigma,\mu)$ with
  $\int_\Omega h\,d\mu=1$ such that $Q$ is bounded if we regard it as
  an operator into~$L_p(h\,d\mu)$. More precisely, we have a
  factorization diagram
  \begin{displaymath}
    \xymatrix{X\ar[d]_S\ar[rr]^{Q}&&L_1(\mu)\\
     L_p(h\,d\mu)\ar@{^{(}->}[rr]&& L_1(h\,d\mu),\ar_{j_h}[u]}
  \end{displaymath}
  where $Sx=h^{-1}Qx$ satisfies $\norm{S}\le M^{(p)}(X)$ and $j_h(g)=gh$
  is an isometric embedding. Note in particular that $S$ is also a
  lattice homomorphism.

  Let us now consider the composite operator
  $R=S\circ T\colon E\to L_p(h\,d\mu)$. By the first part of the
  proof, we know that there is a unique lattice homomorphism
  $\widehat{R}\colon\fbl_p[E]\to L_p(h\,d\mu)$ such that
  $\widehat{R}(\delta_x) = Rx$ for every $x\in E$, and
  $\norm{\widehat{R}}=\norm{R}\le M^{(p)}(X)\,\norm{T}$. Since
  $S\circ \widehat{T}$ and $\widehat{R}$ are lattice homomorphisms
  which agree on the set $\{\delta_x\mid x\in E\}$, it follows that
  $S\circ \widehat{T}=\widehat{R}|_L$. Hence we have
  \begin{multline*}
    \norm{\widehat{T} f}_X
    =x^*\bigl(\abs{\widehat{T} f}\bigr)
    =\bignorm{Q(\widehat{T}f)}_{L_1(\mu)}
    \le\bignorm{S(\widehat{T}f)}_{L_p(hd\mu)}\\
    =\norm{\widehat{R} f}_{L_p(h\,d\mu)}
    \le M^{(p)}(X)\,\norm{T}\,\norm{f}_p.\qedhere
  \end{multline*}
\end{proof}

\begin{rem}
  We do not know of an explicit formula for the norm of the free
  Banach lattice with upper $p$\nobreakdash-esti\-mates, even if the
  constant is one. In fact, we do not know whether this space --- or
  $\fbl^\mathcal{D}[E]$ in general --- can be realized as a lattice of
  functions on the dual ball of~$E^*$. Formulating the latter question
  more rigorously, we ask: Is it true that lattice homomorphisms from
  $\fbl^\mathcal{D}[E]$ to $\mathbb{R}$ separate the points of
  $\fbl^\mathcal{D}[E]$?
\end{rem}

\section{Nonlinear $(p,q)$-summing maps and applications}
\label{section:pqsumming}

The purpose of this section is to explore the norm~\eqref{Eq:FBLpNorm}
further.  It has an obvious similarity with the $p$-summing norm of a
linear operator. A very substantial body of literature is devoted to
the study of $p$-summing norms, their applications, and
generalizations in the linear case. We refer to \cite{DJT} for a
comprehensive exposition of this theory.  Our aim is to establish
analogues of a few of these classical results in our setting.

We begin by introducing a more general version of the
norm~\eqref{Eq:FBLpNorm} involving two indices $1\le p,q < \infty$
and investigating the fundamental properties of this new norm.  For a
Banach space~$E$ and a function $f\in H[E]$, define
\begin{equation}\label{Defn:pqsumnorm}
  \norm{f}_{p,q}
  =\sup\biggl\{\Bigl(\sum_{k=1}^n \bigabs{f(x_k^*)}^p\Bigr)^{\frac{1}{p}}
  \mid n\in\mathbb N,\, x_1^*,\dots,x_n^*\in E^*,\,
  \weaksumnorm{q}{x_1^*,\ldots,x_n^*}\le 1\biggr\}
\end{equation}
and 
\begin{displaymath}
  H_{p,q}[E] = \bigl\{ f\in H[E] \mid  \norm{f}_{p,q} < \infty\bigr\}.
\end{displaymath}
Denote by $\norm{\,\cdot\,}_\infty$ the supremum norm on $B_{E^*}$,
and let $H_\infty [E]$ be the sublattice of~$H[E]$ of all positively
homogeneous functions which are bounded on~$B_{E^*}$. Note that
$\norm{f}_\infty\le\norm{f}_{p,q}$ for every $1 \le p,q < \infty$ and
$f\in H[E]$, and consequently $H_{p,q} [E] \subseteq H_\infty [E]$.
Note also that in the notation previously introduced, we have
$\norm{f}_p=\norm{f}_{p,p}$ and $H_{p}[E]=H_{p,p}[E]$.

The following lemma is straightforward.
\begin{lem} 
  Given $1\le p,q <\infty$ and a Banach space $E$, the space
  $\bigl(H_{p,q}[E],\norm{\,\cdot\,}_{p,q}\bigr)$ equipped with the pointwise
  vector lattice operations is a $p$-convex Banach lattice with
  $p$-convexity constant one.
\end{lem}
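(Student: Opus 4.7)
The plan is to verify four things in turn: that $H_{p,q}[E]$ is closed under the pointwise vector-lattice operations, that $\norm{\,\cdot\,}_{p,q}$ is a lattice norm, that this norm is complete, and finally that the $p$-convexity inequality holds with constant one. Since $H[E]$ is a vector lattice under pointwise operations, it is enough to check closure and the norm axioms.

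The crucial observation underlying everything is that $\norm{\,\cdot\,}_{p,q}$ is \emph{monotone}: if $f,g\in H[E]$ satisfy $\abs{f}\le\abs{g}$ pointwise, then directly from the defining supremum we have $\norm{f}_{p,q}\le\norm{g}_{p,q}$. Combined with the pointwise inequality $\abs{f\vee g}\le\abs{f}+\abs{g}$ and Minkowski's inequality in $\ell_p$ applied inside the supremum, this shows that $H_{p,q}[E]$ is closed under addition, scalar multiplication, and finite suprema and infima, and that $\norm{\,\cdot\,}_{p,q}$ satisfies the triangle inequality and positive homogeneity, and is a lattice (semi)norm. For definiteness, if $\norm{f}_{p,q}=0$ then taking $n=1$ shows $f(x^*)=0$ whenever $\weaksumnorm{q}{x^*}=\norm{x^*}\le 1$, and by positive homogeneity $f$ vanishes on all of $E^*$.

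The $p$-convexity inequality with constant one is the one substantive calculation. Given positive $f_1,\dots,f_m\in H_{p,q}[E]$, set $g=\bigl(\sum_{i=1}^m f_i^p\bigr)^{1/p}$ defined pointwise. Then for any finite sequence $x_1^*,\dots,x_n^*\in E^*$ with $\weaksumnorm{q}{x_1^*,\dots,x_n^*}\le 1$, interchanging the two finite sums gives
\begin{displaymath}
  \sum_{k=1}^n g(x_k^*)^p = \sum_{i=1}^m\sum_{k=1}^n f_i(x_k^*)^p
  \le \sum_{i=1}^m\norm{f_i}_{p,q}^p,
\end{displaymath}
and taking the supremum over such tuples and then a $p$-th root yields $\norm{g}_{p,q}\le\bigl(\sum_{i=1}^m\norm{f_i}_{p,q}^p\bigr)^{1/p}$, which is exactly $p$-convexity with constant~$1$.

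Finally, for completeness, let $(f_j)$ be a $\norm{\,\cdot\,}_{p,q}$-Cauchy sequence. Applying the defining supremum with $n=1$ and a single functional of norm one, and then using positive homogeneity to handle arbitrary $x^*\in E^*$, we obtain $\bigabs{f_j(x^*)-f_k(x^*)}\le\norm{x^*}\,\norm{f_j-f_k}_{p,q}$, so the sequence converges pointwise to a function $f$ which is again positively homogeneous. For any fixed tuple $x_1^*,\dots,x_n^*$ with $\weaksumnorm{q}{x_1^*,\dots,x_n^*}\le 1$, passing to the pointwise limit (Fatou) inside the finite sum $\sum_{k=1}^n\bigabs{f_j(x_k^*)-f(x_k^*)}^p$ shows both that $f\in H_{p,q}[E]$ (using that Cauchy sequences are bounded) and that $\norm{f_j-f}_{p,q}\to 0$. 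The only place where the argument is not entirely formal is this last passage to the limit, but it is an entirely standard Fatou-type argument, so I expect no real obstacle.
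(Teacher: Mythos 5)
Your proof is correct; the paper itself offers no argument for this lemma (it is labelled ``straightforward''), and what you have written is exactly the routine verification the authors leave to the reader: monotonicity of the defining supremum, Minkowski's inequality for the norm axioms, interchange of the two finite sums for $p$-convexity with constant one, and a pointwise-limit argument for completeness. The only point worth a passing word in a write-up is that the abstract expression $\bigl(\sum_{i=1}^m \abs{f_i}^p\bigr)^{1/p}$ given by function calculus coincides with the pointwise one, which holds by the uniqueness of the positively homogeneous continuous function calculus on a sublattice of a vector lattice of functions (cf.\ \Cref{NJLlemma2Aug}\eqref{NJLlemma2AugIIII}), a fact the paper uses repeatedly.
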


It is also easy to see that this space is of interest only for $p\ge q$.

\begin{lem}
  Let $1\le p<q<\infty$. Then $H_{p,q}[E]=\{0\}$ for every Banach space~$E$.
\end{lem}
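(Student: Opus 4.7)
The plan is to exploit positive homogeneity together with a clever choice of test sequence to extract an arbitrarily large factor on the left-hand side of~\eqref{Defn:pqsumnorm}, while keeping the weak $q$-summing norm fixed.

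Let $f\in H_{p,q}[E]$ and fix an arbitrary $x^*\in E^*$ with $\norm{x^*}=1$. Given $n\in\mathbb{N}$, I would consider the sequence $x_k^*=n^{-1/q}x^*$ for $k=1,\dots,n$. Then
\begin{displaymath}
\weaksumnorm{q}{x_1^*,\dots,x_n^*}
=\sup_{x\in B_E}\Bigl(\sum_{k=1}^n n^{-1}\abs{x^*(x)}^q\Bigr)^{1/q}
=\norm{x^*}=1,
\end{displaymath}
while positive homogeneity of $f$ gives
\begin{displaymath}
\Bigl(\sum_{k=1}^n\bigabs{f(x_k^*)}^p\Bigr)^{1/p}
=\bigl(n\cdot n^{-p/q}\bigr)^{1/p}\bigabs{f(x^*)}
=n^{1/p-1/q}\bigabs{f(x^*)}.
\end{displaymath}

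Since $p<q$, the exponent $1/p-1/q$ is strictly positive, so the right-hand side tends to $\infty$ as $n\to\infty$ unless $f(x^*)=0$. Because $\norm{f}_{p,q}<\infty$ dominates every such expression, we are forced to conclude $f(x^*)=0$ for every $x^*$ in the unit sphere of $E^*$. Using positive homogeneity once more (together with $f(0)=0$), $f$ vanishes on all of $E^*$, so $f=0$.

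There is no real obstacle here; the only point worth noting is that the test sequence need not be chosen more cleverly (e.g., via Dvoretzky--Rogers-type constructions), because scaling a single functional already produces the required mismatch between the $\ell_p$-gain and the weak $\ell_q$-cost. The argument is genuinely finite-dimensional and does not use any structure of $E$ beyond the existence of norm-one functionals.
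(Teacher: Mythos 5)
Your proof is correct and is essentially the paper's own argument: both take $n$ copies of a single functional $x^*$ with $f(x^*)\neq 0$ and compare the $n^{1/p}$ growth on the left of~\eqref{Defn:pqsumnorm} with the $n^{1/q}$ growth of the weak $q$-summing norm, the only cosmetic difference being that you normalise the test sequence by $n^{-1/q}$ up front whereas the paper divides through at the end.
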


\begin{proof}
  Let $p,q\in[1,\infty)$, and suppose that~$H_{p,q}[E]$ contains a
  non\-zero function~$f$.  Choose $x^* \in E^*$ such that
  $f(x^*)\neq0$. Then, for every $n\in\mathbb N$, we have
  \begin{displaymath}
    n^{\frac{1}{p}}\bigabs{f(x^*)}
    =\Bigl(\sum_{k=1}^n\bigabs{f(x^*)}^p\Bigr)^\frac{1}{p}
    \le\norm{f}_{p,q}\cdot \weaksumnorm{q}{\,\underbrace{x^*,\ldots,x^*}_n\,}
    =\norm{f}_{p,q}\,n^{\frac{1}{q}}\,\norm{x^*},     
  \end{displaymath}
  which implies that 
  \begin{displaymath}
    n^{\frac{1}{p}-\frac{1}{q}}
    \le \frac{\norm{x^*}}{\bigabs{f(x^*)}}\norm{f}_{p,q}.     
  \end{displaymath}
  Since the right-hand side is independent of~$n$, we conclude that
  $\frac{1}{p}-\frac{1}{q}\le 0$, that is, $p\ge q$.
\end{proof} 

Our next result provides the general comparison among these norms. The
argument follows the same approach as in the Inclusion Lemma
\cite[2.8]{DJT}.

\begin{prop}\label{prop:inclusion}
  Let $1 \le q_j \le p_j < \infty$ for $j=1,2$, and suppose that
  $p_1 \le p_2,$ $q_1 \le q_2,$ and
  $\frac{1}{q_1} - \frac{1}{p_1} \le \frac{1}{q_2} -
  \frac{1}{p_2}$. Then
  \begin{displaymath}
    \norm{f}_{p_2, q_2} \le \norm{f}_{p_1 , q_1}
  \end{displaymath}
  for every $f \in H[E]$.  In particular,
  $H_{p_1,q_1} [E] \subseteq H_{p_2 , q_2}[E]$.
\end{prop}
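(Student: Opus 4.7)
The plan is to adapt the classical proof of the Inclusion Lemma for $p$-summing operators (\cite[2.8]{DJT}) to this nonlinear setting, exploiting positive homogeneity of $f$ in place of linearity. The degenerate case $q_1 = q_2$ reduces, via the hypothesis, to $p_1 \le p_2$, where the conclusion follows immediately from monotonicity of $\ell_p$-norms on finite sequences: any $x_1^*,\ldots,x_n^*$ with $\weaksumnorm{q_2}{x_1^*,\ldots,x_n^*}\le 1$ satisfies $\bigl(\sum\abs{f(x_k^*)}^{p_2}\bigr)^{1/p_2} \le \bigl(\sum\abs{f(x_k^*)}^{p_1}\bigr)^{1/p_1}\le \norm{f}_{p_1,q_1}$. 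So I may assume henceforth $q_1 < q_2$, which together with the hypothesis also forces $p_1 < p_2$.

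Define $r\in(1,\infty)$ by $\tfrac{1}{r} = \tfrac{1}{q_1} - \tfrac{1}{q_2}$. The first key step is an auxiliary Hölder estimate: for arbitrary non-negative scalars $\lambda_1,\ldots,\lambda_n$ and functionals $x_1^*,\ldots,x_n^*\in E^*$,
\begin{displaymath}
\weaksumnorm{q_1}{\lambda_1 x_1^*,\ldots,\lambda_n x_n^*}
\le \Bigl(\sum_{k=1}^n\lambda_k^r\Bigr)^{1/r}\weaksumnorm{q_2}{x_1^*,\ldots,x_n^*},
\end{displaymath}
which is obtained by applying Hölder with conjugate exponents $q_2/q_1$ and $q_2/(q_2-q_1)$ to the sum $\sum_k\lambda_k^{q_1}\abs{x_k^*(x)}^{q_1}$ and then taking the supremum over $x\in B_E$.

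Now fix $x_1^*,\ldots,x_n^*\in E^*$ with $\weaksumnorm{q_2}{x_1^*,\ldots,x_n^*}\le 1$, and write $a_k = \abs{f(x_k^*)}$. Discarding indices where $a_k = 0$, set $\alpha = (p_2 - p_1)/p_1$ and $\lambda_k = a_k^\alpha / C$, where $C = \bigl(\sum_k a_k^{\alpha r}\bigr)^{1/r}$ normalises so that $\sum_k\lambda_k^r = 1$; thus $\weaksumnorm{q_1}{\lambda_1 x_1^*,\ldots,\lambda_n x_n^*}\le 1$ by the Hölder estimate. Applying the definition of $\norm{f}_{p_1,q_1}$ together with positive homogeneity (so that $f(\lambda_k x_k^*) = \lambda_k f(x_k^*)$) and using $(\alpha+1)p_1 = p_2$, one obtains
\begin{displaymath}
\Bigl(\sum_k a_k^{p_2}\Bigr)^{1/p_1}
= C\,\Bigl(\sum_k\lambda_k^{p_1}a_k^{p_1}\Bigr)^{1/p_1}
\le C\,\norm{f}_{p_1,q_1}.
\end{displaymath}

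The final step exploits the hypothesis $\tfrac{1}{q_1}-\tfrac{1}{p_1}\le \tfrac{1}{q_2}-\tfrac{1}{p_2}$, which rearranges precisely to $\alpha r \ge p_2$. Monotonicity of $\ell_p$-norms on finite sequences then yields $C\le\bigl(\sum_k a_k^{p_2}\bigr)^{\alpha/p_2}$, and the exponents combine via the identity $\tfrac{1}{p_1} - \tfrac{\alpha}{p_2} = \tfrac{1}{p_2}$ to give $\bigl(\sum_k a_k^{p_2}\bigr)^{1/p_2}\le\norm{f}_{p_1,q_1}$. Taking the supremum over admissible $(x_k^*)$ finishes the proof. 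The main technical obstacle is identifying the right weight $\lambda_k = a_k^\alpha$; the hypothesis on the ``gaps'' $\tfrac{1}{q_j}-\tfrac{1}{p_j}$ is precisely what makes the exponents align after the Hölder estimate and the monotonicity reduction.
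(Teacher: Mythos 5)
Your proof is correct and follows essentially the same route as the paper's: both adapt the classical Inclusion Lemma \cite[2.8]{DJT}, handle the degenerate case $q_1=q_2$ (noting that $p_1=p_2$ forces it) separately, and then weight the functionals by $\lambda_k\sim\abs{f(x_k^*)}^{(p_2-p_1)/p_1}$, control $\weaksumnorm{q_1}{\lambda_1x_1^*,\dots,\lambda_nx_n^*}$ via H\"older with the gap exponent $\tfrac1r=\tfrac1{q_1}-\tfrac1{q_2}$, and close the argument using the monotonicity of $\ell_s$-norms together with the hypothesis on the gaps. Your normalisation $\sum_k\lambda_k^r=1$ versus the paper's unnormalised weights is only a bookkeeping difference; all the exponent identities check out.
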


\begin{proof}
  We begin by observing that the result follows easily for
  $q_1 = q_2$, and if $p_1=p_2$, then the inequalities $q_1 \le q_2$
  and
  $\frac{1}{q_1} - \frac{1}{p_1} \le \frac{1}{q_2} - \frac{1}{p_2}$
  imply that $q_1 = q_2$.  Thus, we may assume that $p_1< p_2$ and
  $q_1< q_2$, and then define
  \begin{displaymath}
    \frac{1}{p} = \frac{1}{p_1} - \frac{1}{p_2} ,
    \quad \frac{1}{q} = \frac{1}{q_1} - \frac{1}{q_2},
  \end{displaymath}
  which satisfy $1 < p\le q < \infty$ by the hypotheses. 

  Let $f\in H [E]$ and fix any $x_1^* , \dots , x_n^* \in E^*$ with
  $\weaksumnorm{q_2}{x_1^*,\ldots,x_n^*}\le 1$. For $1\le k \leq n$,
  define $\lambda_k= \bigabs{f(x_k^*)}^{p_2 / p}$. By the homogeneity
  of $f$, we have
  \begin{equation}\label{prop:inclusionEq1}
    \sum_{k=1}^n\bigabs{f(x_k^*)}^{p_2}
    =\sum_{k=1}^n\bigabs{f(\lambda_k x_k^*)}^{p_1}
    \le\norm{f}_{p_1,q_1}^{p_1}\,\weaksumnorm{q_1}
      {\lambda_1x_1^*,\ldots,\lambda_nx_n^*}^{p_1}. 
  \end{equation}
  H\"{o}lder's inequality shows that
  \begin{displaymath} 
    \Bigl(\sum_{k=1}^n\bigabs{\lambda_k x_k^* (x)}^{q_1}\Bigr)^\frac{1}{q_1}
    \le\Bigl(\sum_{k=1}^n \lambda_k^q\Bigr)^\frac{1}{q}
    \Bigl(\sum_{k=1}^n\bigabs{x_k^* (x)}^{q_2}\Bigr)^\frac{1}{q_2}
    \le\Bigl(\sum_{k=1}^n \lambda_k^q\Bigr)^\frac{1}{q}
    \le\Bigl(\sum_{k=1}^n \lambda_k^p\Bigr)^\frac{1}{p}
  \end{displaymath}
  for every $x\in B_E$ because $\weaksumnorm{q_2}{x_1^*,\ldots,x_n^*}\le 1$ and $p\le q$.
  Taking the supremum over $x\in B_E$ and using the definition of~$\lambda_k$, we obtain
  \begin{equation}\label{prop:inclusionEq2}
    \weaksumnorm{q_1}{\lambda_1x_1^*,\ldots,\lambda_nx_n^*}
    \le\Bigl(\sum_{k=1}^n \bigabs{f(x_k^*)}^{p_2}\Bigr)^{\frac{1}{p}}.
  \end{equation}
  We now substitute~\eqref{prop:inclusionEq2}
  into~\eqref{prop:inclusionEq1} and rearrange the inequality to
  conclude that
  \begin{displaymath}
    \Bigl(\sum_{k=1}^n \bigabs{f(x_k^*)}^{p_2}\Bigr)^{1-\frac{p_1}{p}}
    \le\norm{f}_{p_1 , q_1}^{p_1}.    
  \end{displaymath}
  This completes the proof because $1-\frac{p_1}{p}=\frac{p_1}{p_2}$.
\end{proof}

\begin{prop}\label{prop:equivnormainfinitoV2}
  Let $E$ be a Banach space whose dual has finite cotype~$r\ge 2,$ and
  suppose that $1 \le q< p <\infty$ satisfy
  $\frac{1}{q} - \frac{1}{p} \ge 1-\frac{1}{r}$. Then
  $H_{p,q}[E] = H_\infty [E]$ with equivalence of norms.
\end{prop}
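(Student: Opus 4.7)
My plan is to prove the estimate $\norm{f}_{p,q} \le C\,\norm{f}_\infty$ for $f\in H_\infty[E]$ (which, together with the reverse inequality noted just before the statement, yields the equivalence of norms) by first reducing to the endpoint case $(p,q) = (r,1)$ via \Cref{prop:inclusion}, and then verifying this endpoint directly from the cotype~$r$ hypothesis on~$E^*$.

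For the reduction, the hypotheses $p > q \ge 1$ and $\frac{1}{q} - \frac{1}{p} \ge 1 - \frac{1}{r}$ imply $\frac{1}{p} \le \frac{1}{q} - 1 + \frac{1}{r} \le \frac{1}{r}$, whence $p \ge r$. Applying \Cref{prop:inclusion} with $(p_1, q_1) = (r, 1)$ and $(p_2, q_2) = (p, q)$ then gives $\norm{f}_{p,q} \le \norm{f}_{r,1}$, so it suffices to show $\norm{f}_{r,1} \le C\,\norm{f}_\infty$.

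The endpoint estimate reduces, via the positive homogeneity of~$f$ (which yields $\abs{f(x^*)} \le \norm{f}_\infty\,\norm{x^*}$), to the classical fact that the identity on a Banach space of cotype~$r$ is $(r,1)$-summing: there exists $C$ depending only on the cotype constant of~$E^*$ such that
\begin{equation*}
\Bigl(\sum_{k=1}^n \norm{x_k^*}^r\Bigr)^{1/r} \le C\,\weaksumnorm{1}{x_1^*,\ldots,x_n^*}
\end{equation*}
for all $x_1^*,\ldots,x_n^* \in E^*$. To prove this, the plan is to apply the cotype~$r$ inequality to $x_1^*,\ldots,x_n^*$, bound the resulting $L^2$-Rademacher average crudely by the pointwise supremum $\sup_{\epsilon_k = \pm 1}\bignorm{\sum_k \epsilon_k x_k^*}_{E^*}$, and then identify this supremum by interchanging the order of suprema:
\begin{equation*}
\sup_{\epsilon_k = \pm 1}\Bignorm{\sum_{k=1}^n \epsilon_k x_k^*}_{E^*} = \sup_{x\in B_E}\sup_{\epsilon_k = \pm 1}\Bigabs{\sum_{k=1}^n \epsilon_k x_k^*(x)} = \sup_{x\in B_E}\sum_{k=1}^n\abs{x_k^*(x)} = \weaksumnorm{1}{x_1^*,\ldots,x_n^*}.
\end{equation*}

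The main obstacle is the $(r,1)$-summing property of the identity on~$E^*$; once this consequence of cotype is at hand, the rest is bookkeeping with the Inclusion Lemma, exploiting the fact that the worst case $(p,q) = (r,1)$ is extremal for the gap $\frac{1}{q} - \frac{1}{p}$ allowed by the cotype hypothesis.
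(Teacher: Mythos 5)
Your proposal is correct and follows essentially the same route as the paper: establish the endpoint estimate $\norm{f}_{r,1}\le C\,\norm{f}_\infty$ from the fact that cotype~$r$ of $E^*$ makes the identity $(r,1)$-summing, then transfer to $(p,q)$ via \Cref{prop:inclusion} with $(p_1,q_1)=(r,1)$. The only difference is that you derive the $(r,1)$-summing property of the identity directly from the cotype inequality (correctly, via the interchange of suprema), whereas the paper simply cites \cite[Corollary~11.17]{DJT}; your explicit check that $p\ge r$ is a welcome detail the paper leaves implicit.
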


\begin{proof}
  By \cite[Corollary 11.17]{DJT}, every weakly summable sequence
  in~$E^*$ is strongly $r$-summable, and there exists a constant
  $K > 0$ such that
  \begin{displaymath}
    \Bigl(\sum_{k=1}^n \norm{x_k^*}^{r}\Bigr)^\frac{1}{r}
    \le K\weaksumnorm{1}{x_1^*,\ldots,x_n^*}
  \end{displaymath}
  for every finite sequence $(x_k^*)_{k=1}^n$ in~$E^*$.  Hence, for
  $f\in H_\infty[E]$, we have
  \begin{displaymath}
    \Bigl(\sum_{k=1}^n\bigabs{f(x_k^*)}^{r}\Bigr)^\frac{1}{r}
    \le\norm{f}_\infty\Bigl(\sum_{k=1}^n \norm{x_k^*}^{r}\Bigr)^\frac{1}{r}
    \le K\, \norm{f}_\infty\,\weaksumnorm{1}{x_1^*,\ldots,x_n^*}.
  \end{displaymath}
  Taking the supremum over all $n\in\mathbb N$ and
  $x_1^*,\dots,x_n^*\in E^*$ with
  $\weaksumnorm{1}{x_1^*,\ldots,x_n^*}\leq 1$, we conclude that
  $\norm{f}_{r,1}\le K\norm{f}_\infty$.

  Since $1 \le q<p < \infty$ satisfy
  $\frac{1}{q} - \frac{1}{p}\ge 1 - \frac{1}{r}$, we can apply
  \Cref{prop:inclusion} with $p_2=p$, $q_2=q$, $p_1=r$, and $q_1=1$ to
  obtain $\norm{f}_{p,q}\le\norm{f}_{r,1}\le K\norm{f}_\infty$, so
  that $f\in H_{p,q}[E]$ and the $(p,q)$- and supremum norms are
  equivalent.
\end{proof}

As in the classical setting, the Dvoretzky--Rogers Theorem can be used to show that in general these norms are different:

\begin{prop}\label{prop:pqnoesinfinito} 
  Let $E$ be an infinite-dimensional Banach space, and suppose that
  $1 \le q \le p <\infty$ satisfy
  $\frac{1}{q} - \frac{1}{p} < \frac{1}{2}$. Then
  $H_{p,q} [E] \subsetneq H_\infty [E]$.
\end{prop}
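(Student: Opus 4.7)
The plan is to exhibit an explicit function $f\in H_\infty[E]\setminus H_{p,q}[E]$. My candidate is the dual norm: define $f\colon E^*\to\mathbb{R}$ by $f(x^*)=\norm{x^*}$. This is clearly positively homogeneous and satisfies $\norm{f}_\infty=1$ on $B_{E^*}$, so $f\in H_\infty[E]$.

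The next step is to identify $\norm{f}_{p,q}$ with a classical summing quantity. Substituting this particular $f$ into~\eqref{Defn:pqsumnorm} gives
\begin{displaymath}
  \norm{f}_{p,q}=\sup\biggl\{\Bigl(\sum_{k=1}^n\norm{x_k^*}^p\Bigr)^{\frac{1}{p}}\mid n\in\mathbb{N},\, x_1^*,\dots,x_n^*\in E^*,\, \weaksumnorm{q}{x_1^*,\ldots,x_n^*}\le1\biggr\}.
\end{displaymath}
The paper's $\weaksumnorm{q}{x_1^*,\ldots,x_n^*}$ is a supremum over $x\in B_E$, whereas the standard weak $q$-summing norm of $(x_k^*)_{k=1}^n$ viewed as a sequence in the Banach space $E^*$ is a supremum over $\phi\in B_{E^{**}}$. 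These coincide: by Goldstine's theorem $B_E$ is weak$^*$ dense in $B_{E^{**}}$, and the map $\phi\mapsto\bigl(\sum_{k=1}^n\bigabs{\phi(x_k^*)}^q\bigr)^{1/q}$ is weak$^*$ continuous on $E^{**}$ as a finite combination of weak$^*$ continuous functionals. Hence $\norm{f}_{p,q}$ equals the $(p,q)$-summing norm of the identity operator $\mathrm{id}_{E^*}\colon E^*\to E^*$.

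To finish, I invoke the Dvoretzky--Rogers theorem for $(p,q)$-summing operators (see~\cite{DJT}): if $1\le q\le p<\infty$ satisfy $\tfrac{1}{q}-\tfrac{1}{p}<\tfrac{1}{2}$, then the identity on any infinite-dimensional Banach space fails to be $(p,q)$-summing. Since $E$ is infinite-dimensional, so is $E^*$, and therefore the $(p,q)$-summing norm of $\mathrm{id}_{E^*}$ is infinite. Combined with the previous paragraph, this gives $\norm{f}_{p,q}=\infty$, i.e.\ $f\notin H_{p,q}[E]$, which establishes that the inclusion $H_{p,q}[E]\subseteq H_\infty[E]$ is strict.

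I do not anticipate a genuine obstacle here; the only slightly delicate point is the Goldstine/weak$^*$-continuity identification of the two versions of the weak $q$-summing norm, after which the argument reduces to a direct application of the classical theory.
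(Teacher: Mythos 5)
Your proposal is correct and follows essentially the same route as the paper: both use the witness function $f(x^*)=\norm{x^*}$ and reduce the claim to the Dvoretzky--Rogers theorem, the paper by testing $f$ directly against a weakly $q$-summable but not strongly $p$-summable sequence, you by identifying $\norm{f}_{p,q}$ with the $(p,q)$-summing norm of $\mathrm{id}_{E^*}$ (the two formulations of Dvoretzky--Rogers are equivalent). Your Goldstine/weak$^*$-continuity remark reconciling the supremum over $B_E$ with that over $B_{E^{**}}$ is a valid and worthwhile detail that the paper leaves implicit.
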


\begin{proof}
  By the Dvoretzky--Rogers Theorem \cite[Theorem 10.5]{DJT}, there
  exists a weakly $q$-summable sequence $(x_k^*)_{k\in\mathbb N}$ in
  $E^*$ which fails to be strongly $p$-summable.  Now consider the
  function $f\colon E^*\to\mathbb R$ defined via $f(x^*)=\norm{x^*}$.
  Clearly, $f\in H_\infty [E]$, and for every $n\in \mathbb N$, we
  have
  \begin{displaymath}
    \Bigl(\sum_{k=1}^n \norm{x_k^*}^p\Bigr)^\frac{1}{p}
    \le\norm{f}_{p,q}\, \weaksumnorm{q}{x_1^*,\ldots,x_n^*}.
  \end{displaymath}
  Letting $n\to\infty$, we see that $\norm{f}_{p,q} = \infty$. Thus
  $f \not \in H_{p,q}[E]$.
\end{proof}

Pietsch's Domination Theorem (see, e.g., \cite[Theorem~2.12]{DJT}) is
a cornerstone of the linear theory of $p$\nobreakdash-summing
operators with several important factorization results among its
consequences. We conclude by providing analogues of
\cite[Propo\-si\-tions~2.12 and~2.13]{ART} for $1\le p <\infty$.

Given a Banach space $E$, equip the unit ball~$B_{E^{**}}$ of its
bidual with the relative weak$^*$ topology, and denote the set of
regular Borel probability measures on~$B_{E^{**}}$ by
$\mathfrak P(B_{E^{**}})$.  This is a convex, weak$^*$ compact subset
of the dual space of~$C(B_{E^{**}})$.  Each measure
$\mu\in \mathfrak P(B_{E^{**}})$ induces a function
$f^p_\mu\colon E^*\to \mathbb R_+$ via the definition
\begin{displaymath}
  f_\mu^p (x^*)=\biggl(\int_{B_{E^{**}}}\bigabs{x^*(x^{**})}^p \,
  d\mu(x^{**})\biggr)^\frac{1}{p}
\end{displaymath}
for every $x^*\in E^*$.  This provides a link between $H_p[E]_+$ and
$\mathfrak P(B_{E^{**}})$, as we now explain.

\begin{prop}\label{prop:fmu}
  Let $1\le p <\infty$ and $\mu\in \mathfrak P(B_{E^{**}})$. Then
  $f_\mu^p \in H_p[E]_+$ with $\norm{f_\mu^p}_p\le 1$.
\end{prop}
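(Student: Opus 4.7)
The plan is to verify the two required properties directly from the definition, using Fubini/Tonelli to swap the sum and the integral, and Goldstine's theorem to transfer the weak $p$-summing condition from $B_E$ to $B_{E^{**}}$.

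First I would observe that $f_\mu^p$ is non-negative by construction and positively homogeneous because for every $\lambda \geq 0$ and $x^* \in E^*$, the identity
\begin{displaymath}
  f_\mu^p(\lambda x^*) = \left(\int_{B_{E^{**}}} \lambda^p \bigabs{x^*(x^{**})}^p\, d\mu(x^{**})\right)^{1/p} = \lambda f_\mu^p(x^*)
\end{displaymath}
is immediate, and the case $\lambda \leq 0$ follows from $|\lambda x^*(x^{**})| = |\lambda|\, |x^*(x^{**})|$. (Note that each map $x^{**} \mapsto x^*(x^{**})$ is weak$^*$-continuous and hence Borel-measurable on $B_{E^{**}}$, so the integral makes sense.) Thus $f_\mu^p \in H[E]_+$.

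Next I would verify the norm bound. Fix $n \in \mathbb N$ and $x_1^*,\dots,x_n^* \in E^*$ with $\weaksumnorm{p}{x_1^*,\dots,x_n^*} \le 1$. Applying Tonelli's theorem to the non-negative integrand, I get
\begin{displaymath}
  \sum_{k=1}^n \bigabs{f_\mu^p(x_k^*)}^p
  = \sum_{k=1}^n \int_{B_{E^{**}}} \bigabs{x_k^*(x^{**})}^p\, d\mu(x^{**})
  = \int_{B_{E^{**}}} \sum_{k=1}^n \bigabs{x_k^*(x^{**})}^p\, d\mu(x^{**}).
\end{displaymath}
The key step is now to bound the inner sum pointwise by $1$ on $B_{E^{**}}$. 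The function $x^{**} \mapsto \sum_{k=1}^n |x_k^*(x^{**})|^p$ is weak$^*$-continuous on $B_{E^{**}}$ as a finite sum of $p$-th powers of moduli of weak$^*$-continuous functions, and by Goldstine's theorem $B_E$ is weak$^*$-dense in $B_{E^{**}}$, so
\begin{displaymath}
  \sup_{x^{**}\in B_{E^{**}}} \sum_{k=1}^n \bigabs{x_k^*(x^{**})}^p
  = \sup_{x\in B_E} \sum_{k=1}^n \bigabs{x_k^*(x)}^p
  = \weaksumnorm{p}{x_1^*,\dots,x_n^*}^p \le 1.
\end{displaymath}
Integrating against the probability measure $\mu$ therefore yields $\sum_{k=1}^n |f_\mu^p(x_k^*)|^p \leq 1$, and taking the supremum over all admissible finite families gives $\norm{f_\mu^p}_p \le 1$, which in particular forces $f_\mu^p \in H_p[E]_+$.

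There is no real obstacle here, just two small technical points one must not gloss over: the measurability of the integrand (handled by weak$^*$-continuity of point evaluations), and the fact that the weak $p$-summing norm computed over $B_E$ coincides with the supremum over $B_{E^{**}}$ (handled by Goldstine together with weak$^*$-continuity). Once these are noted, the argument is a one-line application of Tonelli.
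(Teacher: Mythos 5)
Your proof is correct and follows essentially the same route as the paper's: interchange the (finite) sum with the integral, bound the integrand by its supremum over $B_{E^{**}}$, and identify that supremum with $\weaksumnorm{p}{x_1^*,\dots,x_n^*}$ via the weak$^*$ density of $B_E$ in $B_{E^{**}}$ (Goldstine). The only cosmetic difference is that you invoke Tonelli where simple linearity of the integral over a finite sum suffices; your extra remarks on measurability and homogeneity are fine but are treated as immediate in the paper.
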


\begin{proof} 
  The function $f_\mu^p$ is clearly positive and positively
  homogeneous. For $n\in\mathbb N$ and $x_1^*,\ldots, x_n^*\in E^*$,
  we have
  \begin{multline*}
    \biggl(\sum_{k=1}^n\bigabs{f_\mu^p (x^*_k)}^p\biggr)^\frac{1}{p}
    =\biggl(\int_{B_{E^{**}}}\sum_{k=1}^n\bigabs{x_k^*(x^{**})}^p\,
       d\mu(x^{**})\biggr)^\frac{1}{p}\\
    \le \sup_{x^{**}\in B_{E^{**}}}
       \biggl(\sum_{k=1}^n\bigabs{x_k^*(x^{**})}^p\biggr)^\frac{1}{p}
    =\weaksumnorm{p}{x_1^*,\ldots,x_n^*},
  \end{multline*}
  where the last equality follows from the weak$^*$ density of $B_E$
  in~$B_{E^{**}}$.  Hence $\norm{f_\mu^p}_p\le 1$.
\end{proof}

\begin{prop}\label{prop:Pietsch}
  Let $1\le p <\infty$. For every $f\in H_p[E]_+$, there is a measure
  $\mu\in\mathfrak P(B_{E^{**}})$ such that
  \begin{math}
    f(x^*)\le\norm{f}_p \, f_\mu^p(x^*)     
  \end{math}
  for every $x^*\in E^*$.
\end{prop}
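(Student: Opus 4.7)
The plan is to carry out a Pietsch-style Hahn--Banach separation argument on the compact space~$B_{E^{**}}$ equipped with the relative weak$^*$ topology. After raising to the $p^{\text{th}}$ power, the conclusion becomes
\begin{equation*}
  f(x^*)^p\le\norm{f}_p^p\int_{B_{E^{**}}}\bigabs{x^*(x^{**})}^p\,d\mu(x^{**})
\end{equation*}
for every $x^*\in E^*$, so it is natural to consider, for each $x^*\in E^*$, the continuous function $h_{x^*}\in C(B_{E^{**}})$ defined by
\begin{equation*}
  h_{x^*}(x^{**})=\norm{f}_p^p\bigabs{x^*(x^{**})}^p - f(x^*)^p.
\end{equation*}
Let~$\mathcal{C}$ be the convex hull of $\{h_{x^*}\mid x^*\in E^*\}$ inside $C(B_{E^{**}})$, and let $\mathcal{N}\subset C(B_{E^{**}})$ be the open convex cone of strictly negative continuous functions.

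The key step will be to verify that $\mathcal{C}\cap\mathcal{N}=\emptyset$. If $g\in\mathcal{C}\cap\mathcal{N}$, write $g=\sum_{k=1}^n\lambda_k h_{x_k^*}$ with $\lambda_k\ge 0$, $\sum\lambda_k=1$, and take the supremum of $g<0$ over $x^{**}\in B_{E^{**}}$, using the weak$^*$ density of~$B_E$ in~$B_{E^{**}}$ to replace this supremum by one over $x\in B_E$. Setting $y_k^*=\lambda_k^{1/p}x_k^*$ and using positive homogeneity of $f$ (so that $f(y_k^*)^p=\lambda_k f(x_k^*)^p$), this yields
\begin{equation*}
  \norm{f}_p^p\,\weaksumnorm{p}{y_1^*,\ldots,y_n^*}^p
  <\sum_{k=1}^n f(y_k^*)^p,
\end{equation*}
which, after normalising the $y_k^*$ by $\weaksumnorm{p}{y_1^*,\ldots,y_n^*}$ (and handling the trivial case where this vanishes separately), contradicts the definition~\eqref{Eq:FBLpNorm} of~$\norm{f}_p$. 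This is the step I expect to require the most care.

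Once the disjointness is established, Hahn--Banach in $C(B_{E^{**}})$ gives a nonzero continuous linear functional $\Lambda$ with $\Lambda(g)\le\Lambda(h)$ for every $g\in\mathcal{N}$ and $h\in\mathcal{C}$. Because $\mathcal{N}$ is a cone, rescaling forces $\Lambda\le 0$ on $\mathcal{N}$, hence $\Lambda\ge 0$ on $\mathcal{C}$ and $\Lambda$ is a positive functional on $C(B_{E^{**}})$. By the Riesz representation theorem, $\Lambda$ is given by integration against a positive regular Borel measure on $B_{E^{**}}$, which is nonzero because $\Lambda\ne 0$, so we may normalise it to obtain $\mu\in\mathfrak P(B_{E^{**}})$. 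The condition $\int h_{x^*}\,d\mu\ge 0$ for every $x^*\in E^*$ is then exactly the required domination, and taking $p^{\text{th}}$ roots recovers the statement and shows that $f_\mu^p$ is the desired dominating function in the notation of \Cref{prop:fmu}.
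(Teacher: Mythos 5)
Your proposal is correct and follows essentially the same route as the paper's proof: a Pietsch-style Hahn--Banach separation in $C(B_{E^{**}})$ between the convex set generated by the functions $x^{**}\mapsto\norm{f}_p^p\abs{x^*(x^{**})}^p-f(x^*)^p$ and an open cone of definite sign, with positive homogeneity of~$f$ used to verify convexity and the definition~\eqref{Eq:FBLpNorm} (via weak$^*$ density of $B_E$ in $B_{E^{**}}$) used for the disjointness. The only difference is a sign convention (the paper separates the negatives of your $h_{x^*}$ from the strictly positive cone), which is immaterial.
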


\begin{proof} 
  This proof is based on the proof of Pietsch's Domination Theorem
  given in \cite[2.12]{DJT}. For every non\-empty finite subset~$M$ of
  $E^{*}$, define $g_M \colon B_{E^{**}}\to\mathbb R$ by
  \begin{displaymath}
    g_M(x^{**})
    =\sum_{x^* \in M}\Bigl(f(x^*)^p -\norm{f}_p^p \cdot\bigabs{x^*(x^{**})}^p\Bigr).
  \end{displaymath}
  Then $g_M$ is weak$^*$ continuous, and so the set~$Q$ of all such
  functions $g_M$ is contained in~$C(B_{E^{**}})$. Given non\-empty
  finite subsets $M_1$ and $M_2$ of $E^{*}$ and $0<\lambda<1$, the
  positive homogeneity of $f$ implies that
  $\lambda\cdot g_{M_1}+(1-\lambda)\cdot g_{M_2}=g_{M_3}$, where
  \begin{displaymath}
    M_3= \bigl\{\lambda^{1/p}x^* \mid x^* \in M_1 \bigr\}
    \cup \bigl\{(1-\lambda)^{1/p}x^* \mid x^* \in M_2\bigr\}.    
  \end{displaymath}
  This shows that~$Q$ is a convex set.

  The definition~\eqref{Eq:FBLpNorm} of the norm $\norm{\,\cdot\,}_p$ implies
  that $Q$ is disjoint from the strictly positive cone
  \begin{displaymath}
    P=\Bigl\{h\in C(B_{E^{**}})\mid h(x^{**})>0\
    \text{for every}\ x^{**}\in B_{E^{**}}\Bigr\}.    
  \end{displaymath}
  Since $P$ is open and convex, the geometric version of the
  Hahn--Banach Theorem guarantees the existence of a functional
  $\mu\in C(B_{E^{**}})^{*}$ and a constant $c\in\mathbb R$ such that
  $\mu(g)\leq c<\mu(h)$ for every $g\in Q$ and $h\in P$.

  Choosing $M=\{0\}\subseteq E^*$, we have $g_M=0$. Therefore
  $0\in Q$, and so $c\ge 0$. On the other hand, as every strictly
  positive constant function belongs to $P$, we must have $c\le
  0$. It follows that $c=0$, which implies that $\mu(h)\ge 0$ for
  every $h\in C(B_{E^{**}})_{+}$. In other words, $\mu$ is a positive
  regular Borel measure such that
  \begin{displaymath}
    \int_{B_{E^{**}}} g\,d\mu\le 0<\int_{B_{E^{**}}} h\, d\mu
  \end{displaymath}
  for every $g\in Q$ and $h\in P$.  This inequality is unaffected by
  scaling of $\mu$, so we may assume that
  $\mu\in\mathfrak{P}(B_{E^{**}})$. For every $x^* \in E^*$, the
  function $g_{\{x^*\}}$ belongs to~$Q$, and therefore
  \begin{displaymath}
    0\ge \int_{B_{E^{**}}}\Bigl(f(x^*)^p
    -\norm{f}_p^p \cdot\bigabs{x^*(x^{**})}^p\Bigr) d\mu(x^{**})
    =f(x^*)^p - \norm{f}_p^p\, f_\mu^p(x^*)^p 
  \end{displaymath}
  because $\mu$ is a probability measure.
\end{proof}

We can summarize the conclusions of Propositions~\ref{prop:fmu}
and~\ref{prop:Pietsch} as follows.

\begin{cor}
  Let $1\le p <\infty$ and $f\in H[E]_+$. Then $f\in H_p[E]_+$ if and
  only if, for some constant $C>0,$ there is a measure
  $\mu\in\mathfrak{P}(B_{E^{**}})$ such that
  \begin{math}
    f(x^*)\le C\cdot f_\mu^p(x^*) 
  \end{math}
  for every $x^* \in E^*$. Furthermore, when $f\in H_p[E]$, its norm
  $\norm{f}_p$ can be computed as the infimum of all constants~$C$ for
  which such a measure~$\mu$ exists.
\end{cor}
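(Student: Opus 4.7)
The plan is to deduce the corollary directly from the two propositions it is stated to summarize, with only a small amount of additional work needed to verify the formula for the norm as an infimum.

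For the forward implication, I would simply invoke \Cref{prop:Pietsch}: if $f\in H_p[E]_+$, then there exists $\mu\in\mathfrak{P}(B_{E^{**}})$ with $f(x^*)\le\norm{f}_p\,f^p_\mu(x^*)$ for every $x^*\in E^*$, so the constant $C=\norm{f}_p$ works. This already shows that $\inf C\le\norm{f}_p$, where the infimum is taken over all constants~$C$ admitting a dominating measure.

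For the converse, I would argue as follows. Assume $f\in H[E]_+$ and a measure $\mu\in\mathfrak{P}(B_{E^{**}})$ with $f(x^*)\le C\cdot f^p_\mu(x^*)$ for all $x^*\in E^*$. Given any finite sequence $x_1^*,\dots,x_n^*\in E^*$ with $\weaksumnorm{p}{x_1^*,\dots,x_n^*}\le 1$, raising the domination inequality to the $p$-th power and summing gives
\begin{displaymath}
  \sum_{k=1}^n f(x_k^*)^p\le C^p\sum_{k=1}^n f^p_\mu(x_k^*)^p
  = C^p\int_{B_{E^{**}}}\sum_{k=1}^n\bigabs{x_k^*(x^{**})}^p\,d\mu(x^{**})\le C^p,
\end{displaymath}
exactly as in the proof of \Cref{prop:fmu} (the last step uses weak$^*$ density of $B_E$ in $B_{E^{**}}$ together with $\mu$ being a probability measure). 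Taking the supremum over all admissible finite families gives $\norm{f}_p\le C$, which simultaneously shows $f\in H_p[E]_+$ and that $\norm{f}_p\le\inf C$.

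Combined with the forward direction, these two inequalities yield $\norm{f}_p=\inf C$, completing the proof. There is no substantial obstacle here; the content is entirely captured by the preceding two propositions, and the only small point worth double-checking is that the infimum is actually attained (it is, by \Cref{prop:Pietsch}, with $C=\norm{f}_p$), but attainment is not asserted in the statement so this need not be written up explicitly.
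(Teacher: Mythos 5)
Your proposal is correct and coincides with the paper's intended argument: the paper gives no separate proof, stating only that the corollary ``summarizes'' Propositions~\ref{prop:fmu} and~\ref{prop:Pietsch}, and your write-up supplies exactly the routine gluing (Proposition~\ref{prop:Pietsch} for the forward direction with $C=\norm{f}_p$, and the integral computation from Proposition~\ref{prop:fmu} for the converse and the bound $\norm{f}_p\le C$). Nothing further is needed.
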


\section*{Acknowledgements}
P.~Tradacete gratefully acknowledges support by Agencia Estatal de Investigaci\'on (AEI) and Fondo Europeo de Desarrollo Regional (FEDER) through grants MTM2016-76808-P (AEI/FEDER, UE) and MTM2016-75196-P (AEI/FEDER, UE), as well as Spanish Ministry of Science and Innovation, through the ``Severo Ochoa Programme for Centres of Excellence in R\&D'' (CEX2019-000904-S) and from Consejo Superior de Investigaciones Cient\'ificas (CSIC), through  ``Ayuda extraordinaria a Centros de Excelencia Severo Ochoa'' (20205CEX001).

H.~Jard\'on-S\'anchez gratefully acknowledges support by Fundaci\'on Mar\'ia Cristina Masaveu Peterson through its Scholarships for Academic Excellence and Fundaci\'on ``la Caixa'' through its Postgraduate Studies in Europe Fellowships.

V.G.~Troitsky gratefully acknowledges support by Natural Sciences and
Engineering Research Council of Canada.


\begin{thebibliography}{10}
 \bibitem{alb-kal}
   F.~Albiac and N.J.~Kalton,
   \emph{Topics in {B}anach space theory},
   Graduate Texts in Mathematics, vol.~233, Springer, New York, 2006. %\MR{2192298}.
   
 \bibitem{AB03}%{latexcompanion} \label{705}
   C.D.~Aliprantis and O.~Burkinshaw,
   \emph{Locally Solid Riesz Spaces with Applications to Economics},
   2nd ed., AMS, Providence, RI, 2003.

 \bibitem{AB}
   C.D.~Aliprantis and O.~Burkinshaw,
   \emph{Positive operators},
   Springer, Dordrecht, 2006, Reprint of the 1985 original. %\MR{2262133}

 \bibitem{AMA}
   A.~Avil\'es, G.~Mart\'inez-Cervantes, and J.D.~Rodr\'{\i}guez Abell\'an,
   On projective Banach lattices of the form $C(K)$ and $\fbl[E]$,
   \emph{J.~Math.\ Anal.\ Appl.} \textbf{489} (2020), 124129.%\marg{check page numbers?}
   
 \bibitem{AMA2}
   A.~Avil\'es, G.~Mart\'inez-Cervantes, and J.D.~Rodr\'{\i}guez Abell\'an,
   On the Banach lattice $c_0$,
  \emph{Rev.\ Mat.\ Complut.} (to appear);
   \url{https://doi.org/10.1007/s13163-019-00342-x}

 \bibitem{APA}
   A.~Avil\'es, G.~Plebanek, and J.D.~Rodr\'{\i}guez Abell\'an,
   Chain conditions in free Banach lattices,
   \emph{J.~Math.\ Anal.\ Appl.} \textbf{465}, No.~2, 1223--1229 (2018).
   
 \bibitem{ART}
   A.~Avil\'es, J.~Rodr\'{\i}guez, and P.~Tradacete,
   The free Banach lattice generated by a Banach space,
   \emph{J.~Funct.\ Anal.} \textbf{274}, No.~10, 2955--2977 (2018).
  
 \bibitem{ATV}
   A.~Avil\'es, P.~Tradacete, and I.~Villanueva,
   The free Banach lattices generated by $\ell_p$ and $c_0$,
   \emph{Rev.\ Mat.\ Complut.} \textbf{32}, No.~2, 353--364 (2019).

 \bibitem{Baker}
   K.A.~Baker,
   Free vector lattices,
   \emph{Canad.\ J.~Math.}~\textbf{20} (1968), 58--66. %\MR{0224524}

 \bibitem{Bleier}
   R.D.~Bleier,
   Free vector lattices,
   \emph{Trans.\ Amer.\ Math.\ Soc.}~\textbf{176} (1973), 73--87. %\MR{0311541}

 \bibitem{DJT}
   J.~Diestel, H.~Jarchow, and A.~Tonge,
   \emph{Absolutely summing operators},
   Cambridge Studies in Advanced Mathematics, vol.~43, Cambridge University
   Press, Cambridge, 1995.%\MR{1342297}

 \bibitem{fab-ultimo}
   M.~Fabian, P.~Habala, P.~H{\'a}jek, V.~Montesinos, and V.~Zizler,
   \emph{Banach space theory. The basis for linear and nonlinear analysis},
   CMS Books in Mathematics, Springer, New York, 2011.
 %  \MR{2766381}
`
 \bibitem{LT}
   N.J.~Laustsen and V.G.~Troitsky,
   Vector lattices admitting a positively homogeneous continuous function calculus,
   \emph{Q.~J.\ Math.} \textbf{71} (2020), no. 1, 281--294.

 \bibitem{LLOT}
   D.H.~Leung, L.~Li, T.~Oikhberg, and M.A.~Tursi,
   Separable universal Banach lattices,
   \emph{Israel J.~Math.} \textbf{230} (2019), no.~1, 141--152.

 \bibitem{LT2}
   J.~Lindenstrauss and L.~Tzafriri,
   \emph{Classical Banach spaces~II},
   Springer-Verlag, Berlin, 1979.

 \bibitem{MN}
   P.~Meyer-Nieberg,
   \emph{Banach lattices},
   Universitext. Springer-Verlag, Berlin, 1991.

 \bibitem{dePW}
   B.~de~Pagter and A.W.~Wickstead,
   Free and projective {B}anach lattices,
   \emph{Proc.\ Roy.\ Soc.\ Edinburgh} Sect.~A, \textbf{145} (2015), no.~1, 105--143.
   %\MR{3304578}

 \bibitem{T19}
   V.G.~Troitsky,
   Simple constructions of $FBL(A)$ and $FBL[E]$,
   \emph{Positivity} \textbf{23}, (2019), 1173--1178
 \end{thebibliography}
\end{document}